\newtheorem{theorem}{Theorem}[section]
\newtheorem{corollary}[theorem]{Corollary}
\newtheorem{definition}[theorem]{Definition}
\newtheorem{First Proof}[theorem]{First Proof}
\newtheorem{fact}[theorem]{Fact}
\newtheorem{Second Proof}[theorem]{Second Proof}
\newtheorem{lemma}[theorem]{Lemma}
\newtheorem{proposition}[theorem]{Proposition}
\newtheorem{question}{Question}
\newtheorem{remark}[theorem]{Remark}
\begin{document}

%\maketitle

\title{A Variant of Chaitin's Omega Function}

\author{Yuxuan Li}
\address{School of Philosophy\\
Fudan University\\
Shanghai 200433,
People's Republic of China}
\email{leecosin16@gmail.com}
\author{Shuheng Zhang}
\address{Graduate School of Informatics\\
Nagoya University\\
Japan}
\email{tomiaoto@gmail.com}
\author{Xiaoyan Zhang}
\address{Department of Mathematics\\
National University of Singapore\\
Singapore 119076}
\email{e0678309@u.nus.edu}
\author{Xuanheng Zhao}
\address{School of Mathematics\\
  Nanjing University\\
  Nanjing, Jiangsu 210093, People's Republic of China}
\email{xuanheng21@gmail.com}

%\date{\today {}}
\subjclass[2020]{Primary 03D32, 68Q30}
\keywords{Chaitin's Omega, Weakly lowness for K, Left-c.e. real}

\begin{abstract}  We investigate the continuous function $f$ defined by $$x\mapsto \sum_{\sigma\le_L x    }2^{-K(\sigma)}$$ as a variant of Chaitin's Omega from the perspective of analysis, computability, and algorithmic randomness. Among other results, we obtain that: (i) $f$ is differentiable precisely at density random points;  (ii) $f(x)$ is $x$-random if and only if $x$ is weakly low for $K$ (low for $\Omega$); (iii) the range of $f$ is a null, nowhere dense, perfect $\Pi^0_1(\emptyset')$ class with Hausdorff dimension $1$; (iv)
$f(x)\oplus x\ge_T\emptyset'$ for all $x$; (v) there are $2^{\aleph_0}$ many $x$ such that $f(x)$ is not 1-random; (vi) $f$ is not Turing invariant but is Turing invariant on the ideal of $K$-trivial reals. We also discuss the connection between $f$ and other variants of Omega.
\end{abstract}
\maketitle

\section{Introduction}
In this paper we fix an optimal prefix-free machine $U$ and we usually use $K(\sigma)$ to denote $K_U(\sigma)$.

The real $\Omega=\Omega_U=\sum_{U(\sigma)\downarrow} 2^{-|\sigma|}$ was defined by Chaitin \cite{chaitin_1975}, and is now called Chaitin's Omega. An overview of the study of Omega can be found in Barmpalias \cite{MR4382443}. Omega is the halting probability of an optimal prefix-free machine, which is a natural example for a left-c.e.\ and 1-random real. Later Kučera and Slaman \cite{kucera} showed that all the 1-random left-c.e.\ reals are the halting probabilities of universal prefix-free machines, which makes Omega interesting and significant in the theory of algorithmic randomness. Since then, the Omega number has been generalized in many different ways: (i) Fix a universal prefix-free oracle machine $V$, Downey et al.\  \cite{downey2005relativizing} defined and studied the Omega operator
$x\mapsto \sum_{V^x(\sigma)\downarrow} 2^{-|\sigma|}$. This operator is not continuous and not Turing invariant. (ii) H\"olzl et al.\ \cite{hölzl_merkle_miller_stephan_yu_2020} introduced a way to view Chaitin's Omega as a continuous function on the reals. They defined the map
$x\mapsto \sum_{\sigma\prec x}2^{-K_U(\sigma)}$
and showed this function is differentiable exactly at the 1-random reals, along with some algorithmic randomness properties, such as the maximum value must be random, the image of $x$ is $x$-random if and only if $x$ is weakly low for $K$ along itself. (iii) Also see Section 6 for the variant of Omega in Becher et al. \cite{MR2275867}.

In this paper, we investigate the properties of another variant of Chaitin's Omega as a continuous function, which was introduced in the second author's unpublished master thesis: let $f$ be the function from the Cantor space $2^\omega$ to $[0,1]$ defined by 
$$f:x\mapsto \sum_{\sigma\le_L x}2^{-K_U(\sigma)},$$
where $\sigma <_L x$ means $\sigma$ is on the left of $x$ and $\sigma\le_L x$ means $\sigma <_L x$ or $\sigma$ is an initial segment of $x$. Formally, $\sigma <_L x$ means that there is $n$ such that $\sigma \upharpoonright n=x \upharpoonright n$, $\sigma (n)=0$ and $x(n)=1$. In order to prevent our function from being confused with other variants of Chaitin's Omega, we use $f$ instead of the traditional symbol $\Omega$.

The outline of this paper is as follows. In Section 2, we provide the preliminaries on computability, algorithmic randomness and Hausdorff dimension.

In Section 3 we discuss the function $f$ from the perspective of mathematical analysis. Given any real $x$, let $\hat{\Omega}(x)=\sum_{\sigma\prec x}2^{-K_U(\sigma)}$, which was defined in \cite{hölzl_merkle_miller_stephan_yu_2020} as discussed above. H\"olzl et al.\ \cite{hölzl_merkle_miller_stephan_yu_2020} showed that $\hat{\Omega}$ is differentiable exactly at 1-random reals and the derivative is always 0 provided it exists. We prove that our function $f$ is differentiable exactly at density random points (see Definition \ref{density}). Note that density randomness implies 1-randomness and the former is a strictly stronger notion since Day and Miller
\cite{MR3350101} showed that for a 1-random real, to be density random is a stronger randomness condition than to be a positive density point. Also the derivative of $f$ is always 0 provided it exists. While $\hat{\Omega}$ is nowhere monotone (Theorem 4.6 in \cite{hölzl_merkle_miller_stephan_yu_2020}), $f$ is strictly monotone increasing in the sense that $x<_Ly$ implies $f(x)<f(y)$ by its definition. From this perspective, the structure of the range of $f$ seems much easier to seize than $\hat{\Omega}(2^\omega)$. In fact, we show that $f(2^\omega)$ is a null (of Lebesgue measure zero), nowhere dense (its closure has empty interior) and perfect (it contains no isolated point) set. This sounds like a Cantor ternary set. In fact, $f(2^\omega)$ can be constructed in a similar way as the Cantor set. We will use this basic but important fact tons of times in this paper. For example, the Hausdorff dimension of $f(2^\omega)$ is 1 since it ``contains'' a generalized Cantor set $C_\gamma$ for every scale $\gamma>1$ (see Subsection 2.2 and Theorem \ref{hausdorff} for more details).

In Section 4, we investigate the complexity properties of $f$. $f(2^\omega)$ is a $\Pi^{0}_1(\emptyset')$ class hence no real in $f(2^\omega)$ is $\emptyset'$-Kurtz random. In \cite{hölzl_merkle_miller_stephan_yu_2020} it was shown that $\hat{\Omega}(x) \oplus x \ge_T \emptyset'$ for every $x$. This property also holds for $f$, that is $f(x)\oplus x\ge_T\emptyset'$ for all $x$. Then by the definition of $f$, we conclude that for every real $x$, the join of any two of $x$, $f(x)$ and $\emptyset'$ can compute the other real.

In Section 5, we investigate the randomness properties of $f$. We show that $f(x)$ is $x$-random if and only if $x$ is weakly low for $K$ (which is equivalent to being low for $\Omega$ by Miller \cite{miller2009k}) while $\hat{\Omega}(x)$ is $x$-random if and only if $x$ is weakly low for $K$ along itself in \cite{hölzl_merkle_miller_stephan_yu_2020}. Many corollaries follows from this equivalent characterization of weakly lowness for $K$. Among others, we show that: (i) $f$ is Turing invariant in the ideal induced by the $K$-trivial reals; (ii) (with Slaman) $f$ is not Turing invariant in general; and (iii) the set $\{ x: f(x) \text{ is not 1-random} \}$ is null. By (iii), the reals $x$ such that $f(x)$ is not 1-random seems to be rare. However, we then show with Yu that there indeed exists a real $x \le_T \emptyset'$ such that $f(x)$ is not 1-random. The proof uses an argument to make $f(x)$ not normal and hence not random. We summarize the core steps in the proof as a lemma called the Small Perturbation Lemma. Using this lemma we can also show that there are $2^{\aleph_0}$ many $x$ such that $f(x)$ is not 1-random. 

At the end of Section 5 we introduce two equivalent characterizations of 2-randomness and $\emptyset'$-Kurtz randomness which is related to the structure of the range of $f$. We use tests of the form $\{V_n=\bigcup_{i}[a_i^n,b_i^n]\}_{n\in\omega}$ where $a_i^n,b_i^n$ are uniformly left-c.e.\ reals.

In Section 6, we consider the relation between $f$ and the other variants of Chaitin's Omega including the ones in  \cite{MR2275867} and \cite{downey2005relativizing}. In \cite{MR2275867}, the authors defined $\Omega[X]=\sum_{U(\sigma)\in X} 2^{-|\sigma|}$ for $X \subseteq2^{<\omega}$. $f$ can be viewed as the restriction of $\Omega[\cdot]$ on the sets of the form $\{\sigma:\sigma\le_Lx\}$ for $x\in 2^\omega$ by Fact \ref{502}. Then the positive results in \cite{MR2275867} can be used to derive positive results for our function $f$. For example, it was shown in \cite{MR2275867} that if $n \ge 2$ and $X$ is $\Sigma_n^0$-complete or $\Pi_n^0$-complete, then $\Omega[X]$ is 1-random. An immediate corollary is that if $x$ is left-$\Sigma_n^0$-complete for $n \ge 2$, then $f(x)$ and $\sum_{\sigma<_Lx}2^{-K(\sigma)}$ are 1-random. Becher et al.\ \cite{MR2275867} showed that for a nonempty c.e.\ set $X \subseteq 2^{<\omega}$ and a left-c.e.\ and 1-random real $a\in [0,1]$, there is an optimal machine $V$ such that $a=\Omega_V[X]$. We generalize this result by showing that if $a$ is a left-c.e. and 1-random real, and $x$ is a left-c.e.\ real, then there is an optimal machine $V$ such that $0.a=f_V(x)$. We end Section 6 and the whole paper by raising some questions related to the previous sections.

\section{Preliminaries}
\subsection{Computability theory and algorithmic randomness}
We assume the reader has known basic computability theory and algorithmic randomness, as provided by Soare \cite{MR882921}, Downey-Hirschfeldt \cite{downey2010algorithmic} and Nies \cite{MR2548883}.

We work in the Cantor space $2^\omega$, and use a \textit{real} to denote an element of $2^\omega$ or just an element of the interval $[0,1]$. For finite sequences $\sigma,\tau\in 2^{<\omega}$ or a real $x\in 2^\omega$, we use $\sigma\prec\tau $ or $\sigma\prec x$ to denote that $\sigma$ is an initial segment of $\tau$ or $x$. Let $[\sigma]$ be the class $\{x\in2^\omega :x\succ\sigma   \}$. We use $K(\sigma)$ to denote the prefix-free complexity $K_U(\sigma)$ when there is no ambiguity. We use $\lambda$ to denote the empty string. A real $x$ in $2^\omega$ is \textit{left-c.e.}\ if there is a computable nondecreasing sequence $\{x_s\}$ of rationals such that $\lim_sx_s=x$. Similarly we define the \textit{right-c.e.}\ reals. A real is \textit{computable} if it is left-c.e.\ and right-c.e.. A real is \textit{difference left-c.e.}\ if it is the difference of two left-c.e.\ reals. For reals $x$ and $y$, let $x\oplus y$ be the unique real $z$ such that $z(2i)=x(i)$ and $z(2i+1)=y(i)$ for all $i$. For strings $\sigma$ and $\tau$ such that $|\sigma|=|\tau|$ or $|\sigma|=|\tau|+1$, let $\sigma\oplus\tau$ be the unique string $\rho$ of length $|\sigma|+|\tau|$ such that $\rho(2i)=\sigma(i)$ for all $i<|\sigma|$ and that $\rho(2j+1)=\tau(j)$ for all $j<|\tau|$.

A partial computable function mapping strings to strings is called a \textit{machine}. A machine $M$ is called \textit{prefix-free} if the domain of $M$ is a prefix-free set. A prefix-free machine $U$ is \textit{optimal} if $K_U(\sigma)\le ^+K_M(\sigma)$ for each prefix-free machine $M$, by $f\le^+g$ for two functions  $f,g:\omega\to\omega$ we mean $(\exists c\in \omega)(\forall n)[f(n) \le g(n)+c]$. A c.e.\ set $W\subseteq \omega \times 2^{<\omega}$ is a \textit{Kraft-Chaitin (KC) set} if 
$$\sum_{(n,\sigma)\in W}2^{-n}\le 1.$$

The following theorem reduces the problem of constructing a prefix-free machine to defining a KC set, which is easier to handle.

\begin{theorem}[Machine Existence Theorem, see \cite{MR2548883} P88] For each KC set, one can effectively obtain a prefix-free machine $M$ such that 
$$(\forall r,\sigma)[(r,\sigma)\in W \iff(\exists\tau)[|\tau|=r \And M(\tau)=\sigma]].$$    
\end{theorem}

A real $x$ is \textit{1-random} if $K(x\upharpoonright n) \ge^+n$. For an oracle $z$, a real $x$ is \textit{$z$-random} if $K^z(x\upharpoonright n) \ge^+n$, and is \textit{$n$-random} if it is $\emptyset^{(n-1)}$-random. A real $x$ is \textit{Kurtz random} (also known as weakly random) if it is not in any $\Pi^0_1$ null class. A real $x$ is \textit{weakly 2-random} if it is not in any $\Pi_2^0$ null class.

A \textit{supermatringale} is a function $B:2^{<\omega}\to[0,+\infty)$ that satisfies for every $\sigma$, $B(\sigma0)+B(\sigma1)\le 2B(\sigma)$. A supermartingale $B$ is \textit{computably enumerable} if $B(\sigma)$ is left-c.e.\ uniformly in $\sigma$. Schnorr showed that a real $x$ is 1-random if and only if there is no c.e.\ supermartingale $B$ such that $\sup_nB(x\upharpoonright n)= \infty$.

A real $x$ is \textit{$K$-trivial} if $K(x \upharpoonright n)\le ^+K(n) $. The following are some lowness properties weaker than $K$-triviality: A real $x$ is \begin{enumerate}[(i)]
    \item \textit{weakly low for $K$} if $(\exists c\in \omega)(\exists^\infty n) [K(n)\le K^x(n)+c]$; 
    \item \textit{weakly low for $K$ along itself} if $(\exists c\in \omega)(\exists^\infty n) [K(x\upharpoonright n)\le K^x(x\upharpoonright n)+c]$; 
    \item \textit{low for $\Omega$} if $\Omega$ is $x$-random. 
\end{enumerate} Miller showed that (i) and (iii) are equivalent.

\begin{theorem}[\cite{miller2009k}]\label{Miller}
A real is low for $\Omega$ if and only if it is weakly low for $K$.
\end{theorem}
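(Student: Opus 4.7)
The plan is to prove both directions of Miller's equivalence, relying throughout on the Levin--Schnorr characterization that $\Omega$ is $x$-random if and only if $K^x(\Omega\upharpoonright n) \ge n - O(1)$, together with the classical fact that $\Omega\upharpoonright n$ uniformly computes a canonical index of the finite set $\{\sigma : K(\sigma) \le n - d\}$ for a fixed constant $d$: once the left-c.e.\ approximation $\Omega_s$ of $\Omega$ agrees with $\Omega$ on its first $n$ bits, no prefix-free description of length at most $n - d$ can appear afterward without forcing $\Omega > 1$.

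For the direction \emph{weakly low for $K$ implies low for $\Omega$}, I would argue by contrapositive. If $\Omega$ is not $x$-random, fix a constant $c$ with $K^x(\Omega\upharpoonright n) \le n - c$ for infinitely many $n$. Composing a short $x$-description of $\Omega\upharpoonright n$ with the canonical $x$-computable enumeration of all $K$-descriptions of length at most $n - d$, and tagging a particular integer $m$ inside the enumerated set with an $O(\log K(m))$ pointer, builds a prefix-free $x$-machine witnessing $K^x(m) < K(m) - c + O(\log K(m))$ for cofinally many $m$. Absorbing the logarithmic overhead by slightly weakening the constant and letting $c$ vary negates weak lowness of $K$ at $x$.

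For the converse direction I would again proceed by contrapositive: assume $K(n) - K^x(n) \to \infty$ and construct an $x$-Martin-L\"of test capturing $\Omega$. A Kraft--Chaitin request strategy enumerates, for each integer $n$, an open interval around the current $\Omega_s$ whose radius shrinks in step with the growing gap $K(n) - K^x(n)$; the monotone advance of $\Omega_s$ then guarantees that $\Omega$ itself lies inside the test at cofinally many levels, while the gap forces the $x$-weight of the requests to be summable. The main technical obstacle is precisely this bookkeeping: arranging that the weight of the requests remains summable relative to $x$, that the request set is prefix-free, and that the target $\Omega$ is still covered cofinitely often. This is the delicate balancing at the heart of Miller's original argument in \cite{miller2009k}, and I would follow his blueprint rather than attempt a new construction.
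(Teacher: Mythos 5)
The paper does not prove this statement; it is imported verbatim from Miller \cite{miller2009k} as background, so there is no in-text proof to compare against. Judged on its own terms, your sketch has a genuine gap in the direction ``weakly low for $K$ $\Rightarrow$ low for $\Omega$.'' You propose to recover $m$ from a short $x$-description of $\Omega\upharpoonright n$ together with ``an $O(\log K(m))$ pointer'' into the enumerated set $S_n=\{\sigma: K(\sigma)\le n-d\}$. But $|S_n|$ can be as large as $2^{n-d}$, so indexing $m$ inside $S_n$ costs on the order of $n-d\approx K(m)$ bits, not $O(\log K(m))$; and a description of total length roughly $(n-c)+K(m)$ is far from witnessing $K^x(m)<K(m)-c$. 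Moreover, even if the bound $K^x(m)<K(m)-c+O(\log K(m))$ were established, the $O(\log K(m))$ term is unbounded in $m$, so it cannot be ``absorbed into the constant'' and the inequality becomes vacuous for large $m$; in particular it does not yield $K(m)-K^x(m)\to\infty$, which is what negating weak lowness for $K$ requires (note the quantifier: for every $c$, for all but finitely many $m$). For the converse direction you explicitly defer the summability bookkeeping to Miller's original construction, so that part is an outline rather than an argument.

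If you want a self-contained route that stays inside the paper's toolkit, note that the real $\beta:=\sum_n 2^{-K(n)}$ is left-c.e., and by Theorem \ref{Solo} (with the computable oracle and $f=K$) it is $1$-random. By Theorem \ref{KS}, $\beta$ and $\Omega$ are therefore Solovay-equivalent left-c.e.\ reals, and Solovay reducibility between left-c.e.\ reals transfers $A$-ML tests (a standard lemma, not stated in the paper but easy: pushing a test through the translation $\phi$ multiplies measures by the Solovay constant). Hence $\Omega$ is $A$-random iff $\beta$ is $A$-random. Finally, Theorem \ref{Solo} relativized to $A$ says $\beta$ is $A$-random iff $K$ is an $A$-Solovay function, and since $K^A\le^+K$ always holds, this is exactly the statement that $K(n)=^+K^A(n)$ infinitely often, i.e.\ that $A$ is weakly low for $K$. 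This is essentially the same Solovay-function calculus the paper deploys in the proof of Theorem \ref{lizhang}, so it fits the surrounding development much better than a from-scratch compression argument.
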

In particular, every 2-random real is weakly low for $K$ since they are low for $\Omega$. We will also use the following equivalent characterization of $K$-triviality.
\begin{theorem}[Miller, see \cite{downey2010algorithmic} Theorem 15.9.2]\label{dce}
    A real $x$ is $K$-trivial if and only if $x$ is low for difference left-c.e.\ reals (the class of reals that are difference left-$x$-c.e.\ coincides with the class of difference left-c.e.\ reals).
\end{theorem}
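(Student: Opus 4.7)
\textbf{Plan for Theorem \ref{dce}.} The backward inclusion (every left-d.c.e. real is trivially left-$x$-d.c.e.) is immediate, so the content is the forward inclusion of the classes and its equivalence with $K$-triviality of $x$. I would prove the two directions separately and lean on the package of equivalences for $K$-triviality that is already available in the preliminaries.

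\textbf{($K$-trivial $\Rightarrow$ every left-$x$-d.c.e.\ real is left-d.c.e.)} Suppose $x$ is $K$-trivial. It suffices to show that every left-$x$-c.e.\ real $\beta$ is left-d.c.e., since left-$x$-d.c.e.\ reals are differences of two such. I would use the cost-function characterization of $K$-triviality of Nies: there is a computable approximation $\{x_s\}$ to $x$ with $\sum_s \mathbf{c}(x_s,s) < \infty$ for the standard cost function $\mathbf{c}$. Given a Kraft--Chaitin style $x$-enumeration of $\beta$ as a supremum of $x$-c.e.\ rational stages $\beta_s = \Phi^{x_s}(s)$, I would run the enumeration against the computable approximation $\{x_s\}$ and separate the observed fluctuations into the positive part $P_s$ (increases when the oracle change is eventually ``correct'') and the negative part $N_s$ (decreases, which can only occur at stages where $x_s$ changes below the use). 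Summability of the cost function bounds $\sum_s N_s < \infty$, so $\gamma := \sum_s N_s$ is left-c.e.; then $\beta + \gamma$ is left-c.e., giving the representation $\beta = (\beta+\gamma) - \gamma$ as a difference of left-c.e.\ reals.

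\textbf{(Every left-$x$-d.c.e.\ real is left-d.c.e.\ $\Rightarrow$ $x$ is $K$-trivial.)} Under the assumption, $\Omega^x$, being left-$x$-c.e., is left-d.c.e., hence $\le_T \emptyset'$; more generally, for every $\Sigma^0_1(x)$ class $U$, the measure $\mu(U)$ is a left-d.c.e.\ real. I would combine this with Miller's equivalence (Theorem \ref{Miller}): since $\Omega^x$ is automatically $x$-random and the hypothesis controls all $x$-measures uniformly, one obtains that $\Omega$ itself is $x$-random, so $x$ is weakly low for $K$. To upgrade weak lowness to full $K$-triviality, I would apply the assumption to the left-$x$-c.e.\ reals $\sum_{\sigma} 2^{-K^x(\sigma)}$ and related oracle randomness deficiencies: any such object being left-d.c.e.\ pins its $x$-relativised behaviour to computable behaviour modulo a left-c.e.\ correction, which (via the Hirschfeldt--Nies cost-function machinery used in reverse) yields $K(n) \ge^+ K^x(n) - O(1)$ and hence $K$-triviality.

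\textbf{Main obstacle.} The forward direction is the technical heart: one must show that the summable cost really does dominate the total downward motion of $\beta_s$ induced by oracle changes, not only the changes that prefix-free machines ``see''. Ensuring that the use of the $x$-reduction for $\beta$ is properly bounded against the cost function (perhaps after passing to a well-behaved approximation of $\beta$) is the step I would spend the most care on. The reverse direction is softer, since most of the work is done by Theorem \ref{Miller} and the standard characterizations of $K$-triviality; the remaining issue is merely extracting $K$-triviality from weak lowness plus the d.c.e.\ preservation hypothesis.
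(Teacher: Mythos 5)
The paper does not prove Theorem \ref{dce}: it is imported as background (Miller; Downey--Hirschfeldt, Theorem 15.9.2) and only used later (e.g.\ for Corollary \ref{407}), so your sketch must be judged against the standard proof, and as written it has a real gap in each direction. In the forward direction your skeleton is the right one: reduce to left-$x$-c.e.\ reals (fine, since left-d.c.e.\ reals are closed under subtraction), split the fluctuations of an approximation $\beta_s$ computed from a computable approximation $\{x_s\}$ into increases and drops, show the total drop $\gamma=\sum_s N_s$ is finite and left-c.e., and write $\beta=(\beta+\gamma)-\gamma$. But the step you lean on --- that summability of the \emph{standard} cost function bounds $\sum_s N_s$ --- is not correct as stated: the size of the drop caused by an $x$-change at position $n$ has nothing to do with the weights $2^{-K_s(i)}$ appearing in the standard cost function; it is governed by how much of $\beta$'s accumulated increase was certified by computations whose use exceeds $n$. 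The repair is to define a cost function from the $\beta$-approximation itself (charge at $(n,s)$ the amount of increase that a change of $x\upharpoonright n$ at stage $s$ could cancel), verify the limit condition, and invoke the fact that $K$-trivial reals obey \emph{every} monotone cost function with the limit condition, not merely the standard one. You correctly flag this as the main obstacle, but the tool you name would not close it.

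The converse is where the proposal genuinely fails. The inference ``the hypothesis controls all $x$-measures uniformly, hence $\Omega$ is $x$-random, hence $x$ is weakly low for $K$'' is asserted with no mechanism; and even if granted, weak lowness for $K$ is very far from $K$-triviality (every 2-random real is weakly low for $K$), so the ``upgrade'' is the entire content of this direction. Moreover the inequality you propose to extract, $K(n)\ge^+ K^x(n)-O(1)$, holds trivially for every $x$ (relativization can only decrease $K$) and therefore yields nothing; what is needed is an actual $K$-triviality argument. The known proof instead applies the hypothesis to the single real $\Omega^x$: it is left-$x$-c.e., hence by hypothesis left-d.c.e.; being 1-random, Theorem \ref{cecoce} (Rettinger--Zheng) forces it to be left-c.e.\ or right-c.e.; and one then concludes that $x$ is $K$-trivial from the companion results in the same section of Downey--Hirschfeldt relating $K$-triviality of $x$ to $\Omega^x$ being a d.c.e.\ real. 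Without an argument of that kind the right-to-left implication remains unproved.
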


For left-c.e.\ reals $x$ and $y$, $x$ is \textit{Solovay reducible} to $y$, written $x\leq_S y$, if there is $d\in\omega$ such that $y-2^{-d}x$ is left-c.e.. A left-c.e.\ real $x$ is \textit{Solovay complete} if $y\leq_S x$ for any left-c.e.\ real $y$. As stated in the introduction section, Kučera and Slaman \cite{kucera} showed that all the left-c.e.\ and 1-random reals $x$ are \textit{Omega numbers}, i.e. there is an optimal prefix-free machine $V$ such that $x=\sum_{V(\sigma)\downarrow} 2^{-|\sigma|}$. Moreover, they are all Solovay-complete.
\begin{theorem}[\cite{kucera}]\label{KS} Suppose that $x \in 2^\omega$ is a 1-random and left-c.e.\ real. Then $x$ is Solovay complete.
\end{theorem}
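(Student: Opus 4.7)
The plan is to show, by contradiction, that every left-c.e. real $y$ satisfies $y \leq_S x$, which is the defining property of Solovay completeness. Fix non-decreasing computable approximations $x_s \nearrow x$ and $y_s \nearrow y$, normalized so that $y - y_0 \leq 1$. Recall that $y \leq_S x$ is equivalent to $\sup_s (y - y_s)/(x - x_s) < \infty$, where the ratio is taken to be $0$ when $y = y_s$ (and where the denominator is always strictly positive since the $1$-random $x$ is noncomputable). The negation of this condition yields, for each $n$, stages $s$ with $y - y_s > 2^n (x - x_s)$: the $y$-approximation converges much faster than the $x$-approximation at arbitrarily late stages.

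Assuming $y \not\leq_S x$, I would exploit this rapid convergence via a Kraft--Chaitin (K--C) request set to produce short descriptions of initial segments of $x$, contradicting the $1$-randomness of $x$. Writing $y - y_0 = \sum_i q_i$ as a computable sum of positive rationals (so $\sum_i q_i \leq 1$), let $s(i)$ be the stage at which the quantum $q_i$ is appended to $y_s$ and set $k_i := \lceil -\log_2 q_i \rceil$. Enumerate the K--C axiom $\langle k_i,\, x_{s(i)} \upharpoonright k_i \rangle$ for each $i$. The total weight is $\sum_i 2^{-k_i} \leq \sum_i q_i \leq 1$, within the K--C budget, so this yields a prefix-free machine $M$ with $K_M(x_{s(i)} \upharpoonright k_i) \leq k_i$ for every $i$.

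The crucial step is to show that the contradiction hypothesis forces infinitely many $i$ with $k_i$ arbitrarily large and $x_{s(i)} \upharpoonright k_i = x \upharpoonright k_i$, or equivalently $x - x_{s(i)} < 2^{-k_i}$; this would give $K(x \upharpoonright k_i) \leq k_i + O(1)$ for arbitrarily large $k_i$, contradicting $1$-randomness. The main technical obstacle lies precisely here: controlling the rate at which the leading bits of $x$ stabilize relative to the stages at which the quanta $q_i$ are enumerated. The hypothesis $\sup_s (y - y_s)/(x - x_s) = \infty$ is exactly what is needed to locate, within each sufficiently late block of indices, some $i$ where $x - x_{s(i)}$ has become negligible compared to $q_i$; a safety shift $k_i \mapsto k_i + c$ absorbs the $O(1)$-errors that arise from the gap between the rational $q_i$ and the dyadic $2^{-k_i}$. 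Combined with the K--C machine constructed above, this yields the required failure of $1$-randomness of $x$, completing the argument.
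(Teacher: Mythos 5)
The paper does not prove this statement; it cites it as the Ku\v{c}era--Slaman theorem, so your proposal has to be judged against the known argument. Your setup (the characterization $y \le_S x$ iff $\sup_s (y-y_s)/(x-x_s) < \infty$, and the plan to convert a failure of this into short descriptions of initial segments of $x$ via a Kraft--Chaitin set of total weight $\le \sum_i q_i \le 1$) is fine, and the dyadic-carry issue you wave away with a ``safety shift'' is indeed only an $O(1)$ matter. The genuine gap is exactly the step you flag as the ``crucial step'' and then assert without justification: the hypothesis $\sup_s (y-y_s)/(x-x_s)=\infty$ does \emph{not} force the existence of individual quanta $q_i$ with $x - x_{s(i)} < 2^{-k_i}\approx q_i$. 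The hypothesis is about the \emph{aggregate} future increase of $y$ versus the aggregate future increase of $x$, and with a fixed computable decomposition $y-y_0=\sum_i q_i$ an adversarial pair of approximations defeats your claim: let the remaining gap of $x$ sit near $\varepsilon_n=4^{-n}$ for a long stretch of stages during which $y$ receives about $2^{n+1}$ quanta each of size $\varepsilon_n/2$. Then at the start of the stretch $y-y_s \approx 2^n(x-x_s)$, so the ratio is unbounded, yet every single quantum satisfies $q_i \le x - x_{s(i)}$, so none of your axioms $\langle k_i, x_{s(i)}\upharpoonright k_i\rangle$ describes a true initial segment of $x$ and no contradiction with $1$-randomness is obtained. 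The total $y$-increase spent is about $2^{-n}$ per stretch, so this can recur for every $n$.

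What is missing is precisely the adaptive mechanism that constitutes the actual Ku\v{c}era--Slaman proof: one must regroup the $y$-increments on the fly by ``challenging'' $x$ --- when an amount $\delta$ of $y$-increase has accumulated, claim the interval $[x_s, x_s+2^{-k}\delta]$ and issue no further request for this component until the approximation to $x$ has exited that interval; either $x$ stays inside some claimed interval forever (yielding, over all $k$, a Martin-L\"of/Solovay test capturing $x$, contradicting $1$-randomness), or every challenge is met, in which case chaining the exits gives $y-y_s \le 2^{k}(x-x_s)+O(2^{-\cdot})$, i.e.\ $y\le_S x$. Your static assignment ``quantum $q_i$ at the stage it happens to be enumerated'' cannot substitute for this waiting construction, so as written the proof does not go through; incorporating the challenge-interval bookkeeping (or, equivalently, building the test from the claimed intervals rather than a KC set) is the essential repair.
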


Then a real is right-c.e.\ and 1-random if and only if it is the complement of an Omega number. Moreover, one does not obtain anything new by considering difference left-c.e.\ reals.
\begin{theorem}[Rettinger and Zheng \cite{rettinger2005solovay}]\label{cecoce}
    A 1-random and difference left-c.e.\ real which is not left-c.e.\ must be a right-c.e.\ real.
\end{theorem}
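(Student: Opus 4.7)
The plan is to prove this via the Solovay test characterization of $1$-randomness: a real is $1$-random if and only if it lies in only finitely many intervals from any c.e.\ sequence of rational intervals with finite total length.

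Fix a decomposition $x = a - b$ with $a, b$ left-c.e.\ in $[0,1]$, and computable nondecreasing rational approximations $a_s \uparrow a$ and $b_s \uparrow b$. Set $x_s = a_s - b_s$, a computable sequence of rationals with $x_s \to x$. The total variation is bounded:
\[
\sum_s |x_{s+1} - x_s| \;\le\; \sum_s (a_{s+1}-a_s) + \sum_s (b_{s+1}-b_s) \;\le\; a + b \;<\; \infty,
\]
so $(x_s)$ is a computable rational sequence of bounded variation.

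The key step is: for each $s$, form the rational open interval $I_s$ whose endpoints are $x_s$ and $x_{s+1}$. The sequence $\{I_s\}$ is computably enumerable with $\sum_s |I_s| < \infty$, hence is a Solovay test. The $1$-randomness of $x$ then forces $x$ to lie in only finitely many $I_s$; equivalently, the approximation $x_s$ crosses $x$ only finitely often. So there is a stage $s_0$ beyond which all $x_s$ sit on one side of $x$: either $x_s \le x$ for every $s \ge s_0$, in which case $x = \sup_{s \ge s_0} x_s$ is left-c.e.\ (the running maximum of $(x_s)_{s\ge s_0}$ is a computable nondecreasing rational sequence converging to $x$), or $x_s \ge x$ for every $s \ge s_0$, in which case the running minimum is a computable nonincreasing rational sequence converging to $x$, witnessing that $x$ is right-c.e. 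Since $x$ is assumed not to be left-c.e., the second alternative must hold and $x$ is right-c.e.

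I do not expect a serious obstacle here: the crux is recognizing that the bounded total variation of the d.c.e.\ approximation $(x_s)$ packages its oscillations around $x$ into a genuine Solovay test, after which the characterization of $1$-randomness via Solovay tests immediately constrains the approximation to be eventually one-sided. The only point requiring slight care is that the constructed Solovay test must consist of intervals with \emph{rational} endpoints, which is automatic here because each $a_s, b_s$, and hence each $x_s$, is rational by construction.
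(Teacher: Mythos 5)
Your proof is correct. Note that the paper under review does not prove this statement at all; it simply cites Rettinger and Zheng \cite{rettinger2005solovay}, so there is no in-paper proof to compare against. That said, your argument is a clean and standard derivation, and it is essentially the argument one finds in the literature on d.c.e.\ reals and Solovay tests. The crux, correctly identified, is that a d.c.e.\ approximation $x_s = a_s - b_s$ has total variation at most $a+b < \infty$, so the ``oscillation intervals'' $I_s$ between consecutive approximants form a Solovay test; $1$-randomness then forces the approximation to be eventually one-sided, and the two sides correspond exactly to left-c.e.\ and right-c.e. Two minor points worth being explicit about, though neither is a gap: (i) you silently use that a $1$-random real is irrational, which is what rules out $x = x_s$ and lets you upgrade ``$x \notin I_s$'' to ``$x_s$ and $x_{s+1}$ are strictly on the same side of $x$''; and (ii) the assumption $a,b \in [0,1]$ is harmless but should be justified by noting that any representation of a left-d.c.e.\ real in $[0,1]$ as a difference of left-c.e.\ reals can be taken with both summands bounded (indeed the paper's conventions already place left-c.e.\ reals in $[0,1]$). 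With those remarks, the argument is complete.
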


Given a real $x$, a function $g:\omega \to \omega$ is an \textit{$x$-Solovay function} if $K^x(n)\le^+g(n)$ for all $n$ and $K^x(n)=^+g(n)$ for infinitely many $n$. Given a real $x$, a function $h:\omega \to \omega$ is an \textit{$x$-information content measure} in case $h$ is right-$x$-c.e.\ and $\sum_n2^{-h(n)}$ converges. We use the Solovay functions in the following way. 

\begin{theorem}[Bienvenu and Downey \cite{MR2870648}; H\"olzl, Kr\"aling and Merkle \cite{hölzl_kraling_merkle_2009}] \label{Solo}
   Let $h$ be a $x$-information content measure. Then $h$ is an $x$-Solovay function if and only if $\sum_n2^{-h(n)}$ is $x$-random.
\end{theorem}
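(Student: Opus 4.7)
My plan is to prove both directions by relating $S := \sum_n 2^{-f(n)}$ to $\Omega^x$ via Solovay $x$-reducibility, using the relativized Ku\v{c}era--Slaman characterization (Theorem \ref{KS}) that a left-$x$-c.e.\ real is $x$-random precisely when it is Solovay-$x$-equivalent to $\Omega^x$.

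First I would record the basic setup. Since $f$ is right-$x$-c.e.\ with $\sum_n 2^{-f(n)}<\infty$, the relativized Kraft--Chaitin theorem produces an $x$-prefix-free machine $V$ with $K^x_V(n)\le f(n)$, yielding $K^x(n)\le^+ f(n)$. If $f_t\searrow f$ is an $x$-computable approximation from above, then $S_t:=\sum_{n\le t}2^{-f_t(n)}$ is $x$-computable and nondecreasing with limit $S$, so $S$ is left-$x$-c.e. Moreover, the pointwise bound $K^x(n)\le^+ f(n)$ already gives the tail estimate $\sum_{n>N}2^{-f(n)}\le 2^{O(1)}\sum_{n>N}2^{-K^x(n)}$, so $S\le_{\mathrm{Sol}}^x\Omega^x$ holds unconditionally.

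For the forward direction, assume $f$ is $x$-Solovay with witness $c$, so $f(n)\le K^x(n)+c$ for infinitely many $n$. I would establish $\Omega^x\le_{\mathrm{Sol}}^x S$; the conclusion then follows because $\Omega^x$ is $x$-random and Solovay reducibility preserves randomness upward. The reduction is built as follows: at stage $t$, for each index $n$ newly observed with $f_t(n)\le K^x_t(n)+c$, emit a contribution of $2^{-K^x_t(n)}$ to a left-$x$-c.e.\ approximation $\omega_t$. Each such contribution is at most $2^c\cdot 2^{-f_t(n)}$, so the limit $\omega:=\lim_t\omega_t$ satisfies $\omega-\omega_t\le 2^c(S-S_t)$. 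To close the direction I would argue that $\omega$ is itself $x$-random, whence $\omega\equiv_{\mathrm{Sol}}^x\Omega^x$ by the relativized Ku\v{c}era--Slaman theorem, and therefore $\Omega^x\le_{\mathrm{Sol}}^x\omega\le_{\mathrm{Sol}}^x S$.

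For the reverse direction, contrapositively assume $f$ is not $x$-Solovay, so $f(n)-K^x(n)\to\infty$. Then for every constant $k$ the tail $\sum_{n>N}2^{-f(n)}$ is eventually dominated by $2^{-k}$ times the tail of $\Omega^x$; using this fact and an $x$-computable bookkeeping based on the approximations $f_t$ and $K^x_t$, I would produce an $x$-computable function $g$ and an approximation $S_t\nearrow S$ with $S-S_{g(k)}\le 2^{-k}$ for every $k$, contradicting the $x$-randomness of $S$ (since left-$x$-c.e.\ reals with an $x$-computable rate of convergence are not $x$-random). The main obstacle is the forward direction's claim that $\omega$ is itself $x$-random: only the ``good'' indices $n$ with $f(n)\le K^x(n)+c$ contribute to $\omega$, so one must verify that this restricted sub-sum captures enough of $\Omega^x$ to be Solovay-$x$-equivalent to it. This requires a careful counting argument exploiting both the optimality of $U^x$ and the assumed infinitely-often Solovay property to upgrade the one-sided bound into a genuine Solovay $x$-equivalence.
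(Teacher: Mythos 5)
The paper does not prove this result at all; Theorem \ref{Solo} is cited from H\"olzl--Kr\"aling--Merkle and stated without proof, so there is no in-paper argument to compare against. Evaluating your attempt on its own terms, both directions have substantive problems.

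In the forward direction, everything you write reduces to the unproved assertion that the auxiliary real $\omega$, the sub-sum of $\Omega^x$ restricted to the ``good'' indices $n$ where $f(n)\le K^x(n)+c$, is itself $x$-random. You flag this as ``the main obstacle,'' but this is not a technicality to be deferred: it is essentially the whole content of the forward implication. The good set $A=\{n: f(n)\le K^x(n)+c\}$ is not $\Sigma^{0,x}_1$ (the inequality between two right-$x$-c.e.\ quantities is genuinely $\Delta^{0,x}_2$), so you cannot invoke the Chaitin-style result (Theorem \ref{503}(i)) that $\sum_{n\in X}2^{-K(n)}$ is random for infinite c.e.\ $X$; and the dynamically enumerated version you describe only yields some $\Sigma^{0,x}_1$ superset $A'\supseteq A$ with stage-dependent exponents $K^x_{t_n}(n)\ge K^x(n)$, which changes what $\omega$ even is. Without a concrete argument here the chain $\Omega^x\le^x_{\mathrm{Sol}}\omega\le^x_{\mathrm{Sol}}S$ never gets off the ground.

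In the reverse direction the proposed conclusion is actually too strong to be true. You claim to produce an $x$-computable $g$ with $S-S_{g(k)}\le 2^{-k}$ for all $k$; a left-$x$-c.e.\ real with an $x$-computable modulus of convergence is $x$-computable. But $f(n)-K^x(n)\to\infty$ does not make $S=\sum_n 2^{-f(n)}$ an $x$-computable real: take, say, $f(n)=K^x(n)+\lceil\log(n+2)\rceil$, which gives an information content measure with $f-K^x\to\infty$ yet $S$ still tightly encodes the values $K^x(n)$. The obstruction is exactly the one you glossed over --- the threshold $N_k$ beyond which $f(n)\ge K^x(n)+k$ is only $\Delta^{0,x}_2$, not $x$-computable, so no $x$-computable bookkeeping over the approximations $f_t$ and $K^x_t$ can certify when the tail $\sum_{n>N}2^{-f(n)}$ has dropped below $2^{-k}$. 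What the hypothesis actually supports is building an $x$-c.e.\ Solovay test capturing $S$ (covers of small total measure whose endpoints you only enumerate, never compute), not an effective modulus, and organizing that test is where the real work of this direction lies.
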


\subsection{Density randomness}

The Lebesgue’s density theorem asserts that for almost every point $z$ in a measurable set $C\subseteq [0,1]$, the class is “thick” around $z$ in the sense that the relative measure of $C$ converges to 1 as one “zooms in” on $z$. The theorem involves a null set of exceptions $z$. If $C$ is algorithmic in some sense, then the resulting null set is also algorithmic in the sense of the following definition.

\begin{definition}[\cite{miyabe_nies_zhang_2016}]\label{density}
(i) The lower Lebesgue density of a Lebesgue measurable set $P\subseteq \mathbb{R}$ at a point $z$ is 
$$\underline{\rho}(P|z)=\liminf_{\gamma,\delta \to 0^+}\frac{m([z-\gamma,z+\delta]\cap P)}{\gamma+\delta},$$
where $m$ is the Lebesgue measure on $\mathbb{R}$.

(ii) A real $z$ is a density-one point if $\underline{\rho}(P|z)=1$ for every effectively closed set $P$ containing $z$.

(iii) We say that $z$ is density random if $z$ is a density-one point and 1-random.
\end{definition}

Every weakly-2-random real is density random \cite{miyabe_nies_zhang_2016}. As mentioned in the introduction section, there are 1-random reals which are not density random.
\begin{definition}[Bienvenu et al. \cite{MR3474456}]\label{ice}
    A nondecreasing function $g:[0,1]\rightarrow\mathbb{R}$ is interval-c.e.\ if $g(0)=0$ and $g(x)-g(y)$ is a left-c.e.\ real uniformly in all rational pairs $x<y$.
\end{definition}

\begin{theorem}[Miyabe, Nies and Jing Zhang \cite{miyabe_nies_zhang_2016}]\label{25}
	$x\in [0,1]$ is density random if and only if $g'(x)$ exists for every interval-c.e. function $g$.
\end{theorem}

In particular, our function $f$ can be viewed as an interval-c.e.\ function and we shall use the theorem above in Section 3.

We also consider density in the setting of Cantor space.
\begin{definition}[\cite{miyabe_nies_zhang_2016}]
(i) Given a measurable set $C \subseteq 2^\omega$ and $x\in 2^\omega$. The lower density of $x$ in $C$ is defined to be 
$$\underline{\rho}_2(C|x)=\liminf_{n\to\infty}2^n \mu(C \cap [x\upharpoonright n]).$$

(ii) A real $x\in 2^\omega$ is a density-one point in Cantor space if $\underline{\rho}_2(P|x)=1$ for every $\Pi_1^0$ class $P$ containing $x$.

(iii) A real $z\in [0,1]$ is a dyadic density-one point if its dyadic expansion is a density one point in Cantor space.
    
\end{definition}

\begin{theorem}[Khan and Miller \cite{MR3471130}]\label{Khan}
    Let $z\in[0,1]$ be a 1-random dyadic density-one point. Then $z$ is a density-one point.
\end{theorem}

\subsection{Hausdorff dimension}
Our major reference on Hausdorff dimension is Mattila \cite{geometry}.
Given a set $A \subseteq \mathbb{R}$, let $\dim_H(A)$ be the Hausdorff dimension of $A$. The generalized Cantor set $C^\gamma$, where the scale $\gamma > 1$, is constructed by iteratively removing the open middle $\frac{1}{\gamma}$ of each remaining segment. Formally: $C_0^\gamma := [0,1]$,
    $$C_n^\gamma := \frac{(\gamma-1)}{2\gamma}C_{n-1}^\gamma \cup \left( \frac{\gamma+1}{2\gamma} + \frac{(\gamma-1)}{2\gamma}C_{n-1}^\gamma \right),$$
and $C^\gamma:= \bigcap_n C_n^\gamma$.

\begin{fact}[see \cite{geometry} 4.10] \label{gamma}
    $\dim_H(C^\gamma)=-\frac{\log2}{\log\frac{\gamma-1}{2\gamma}}$.
\end{fact}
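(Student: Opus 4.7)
The plan is to exploit the exact self-similarity of $C^\gamma$. Setting $r = \frac{\gamma-1}{2\gamma}$, unpacking the recursion shows that $C^\gamma = \phi_0(C^\gamma) \cup \phi_1(C^\gamma)$, where $\phi_0(x) = rx$ and $\phi_1(x) = (1-r) + rx$ are similarities of ratio $r$, and at stage $n$ the set $C_n^\gamma$ is a union of exactly $2^n$ closed intervals, each of length $r^n$, indexed by $\{0,1\}^n$. Define $s$ by $2 r^s = 1$, i.e.\ $s = -\log 2/\log r$, which is the quantity claimed.

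For the upper bound $\dim_H(C^\gamma) \le s$, the stage-$n$ collection is an $r^n$-cover of $C^\gamma$ whose $s$-weight is $2^n \cdot (r^n)^s = (2 r^s)^n = 1$. Hence $\mathcal{H}^s_{r^n}(C^\gamma) \le 1$ for every $n$, and letting $n \to \infty$ yields $\mathcal{H}^s(C^\gamma) \le 1$, so $\dim_H(C^\gamma) \le s$.

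For the lower bound $\dim_H(C^\gamma) \ge s$, I would invoke the mass distribution principle with the natural Bernoulli-$(\tfrac{1}{2},\tfrac{1}{2})$ measure $\mu$ on $C^\gamma$, which assigns mass $2^{-n}$ to each stage-$n$ component. Given any interval $I$ with $|I| < 1$, pick $n$ with $r^{n+1} \le |I| < r^n$. Two consecutive stage-$n$ components are separated by a gap of length at least $\tfrac{1}{\gamma} r^{n-1}$, a quantity proportional to $r^n$, so $I$ meets at most a constant $c_\gamma$ of the $2^n$ stage-$n$ components. Therefore $\mu(I) \le c_\gamma 2^{-n} = c_\gamma (r^n)^s \le c_\gamma r^{-s} |I|^s$, and the mass distribution principle gives $\mathcal{H}^s(C^\gamma) \ge \mu(C^\gamma) / (c_\gamma r^{-s}) > 0$, hence $\dim_H(C^\gamma) \ge s$.

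The upper bound is essentially free from the self-similar cover; the main (but entirely standard) step is the gap estimate underlying the lower bound. Alternatively, one can cite Moran/Hutchinson for self-similar attractors: the IFS $\{\phi_0,\phi_1\}$ satisfies the open set condition with witness $(0,1)$, since $\phi_0((0,1)) = (0,r)$ and $\phi_1((0,1)) = (1-r,1)$ are disjoint (as $2r < 1$ for $\gamma > 1$), and then the Hausdorff dimension is the unique $s$ solving $r^s + r^s = 1$, which is exactly $-\log 2/\log\frac{\gamma-1}{2\gamma}$.
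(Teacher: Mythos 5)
Your proof is correct. The paper does not actually prove this statement — it records it as a Fact with a citation to Mattila \cite{geometry}, Section 4.10 — so there is no in-paper argument to compare against; what you have written is the standard self-similar Cantor set computation. The upper bound from the stage-$n$ cover with $s$-weight $2^n(r^n)^s = (2r^s)^n = 1$ is exactly right, and the lower bound via the mass distribution principle is sound: the Bernoulli-$(\tfrac12,\tfrac12)$ measure gives each stage-$n$ component mass $2^{-n}$, and since consecutive stage-$n$ components are separated by gaps of length at least $\tfrac{1}{\gamma}r^{n-1}$ (a fixed multiple of $r^n$), any interval $I$ with $r^{n+1}\le |I| < r^n$ meets only a bounded number $c_\gamma$ of them, yielding $\mu(I)\le c_\gamma r^{-s}|I|^s$. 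Your alternative route via Hutchinson's theorem under the open set condition — witness $(0,1)$, valid because $2r = 1 - 1/\gamma < 1$ for all $\gamma > 1$ — is equally legitimate and is how the result is often packaged in the references; both give $s = -\log 2/\log\frac{\gamma-1}{2\gamma}$.
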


\begin{fact}\label{Lip} Let $A \subseteq \mathbb{R}$ and $f \colon A \to \mathbb{R}$ satisfying the Lipschitz condition. Then $\dim_H(A) \ge \dim_H(f(A))$.
\end{fact}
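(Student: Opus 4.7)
The plan is to unravel the definition of Hausdorff dimension via Hausdorff $s$-measures and exploit the Lipschitz constant directly. Let $L$ be a Lipschitz constant for $f$, so $|f(x)-f(y)|\le L|x-y|$ for all $x,y\in A$. The key geometric observation is that for any subset $B\subseteq A$ one has $\mathrm{diam}(f(B))\le L\cdot\mathrm{diam}(B)$, so a Lipschitz map can only shrink the diameters of pieces of a cover, up to the factor $L$.

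First I would fix $s\ge 0$ and $\delta>0$, and take any $\delta$-cover $\{U_i\}$ of $A$ (sets of diameter at most $\delta$; intersecting with $A$ if needed). Then $\{f(U_i\cap A)\}$ is an $(L\delta)$-cover of $f(A)$, and the inequality $\mathrm{diam}(f(U_i\cap A))^s\le L^s\mathrm{diam}(U_i)^s$ yields, upon summing and taking the infimum over all such covers,
\[
\mathcal{H}^s_{L\delta}(f(A))\ \le\ L^s\,\mathcal{H}^s_{\delta}(A),
\]
where $\mathcal{H}^s_{\delta}$ denotes the standard pre-Hausdorff measure. Letting $\delta\to 0$ (so $L\delta\to 0$ as well) gives $\mathcal{H}^s(f(A))\le L^s\mathcal{H}^s(A)$.

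Now I would finish via the usual dimension argument: if $s>\dim_H(A)$ then $\mathcal{H}^s(A)=0$, hence $\mathcal{H}^s(f(A))=0$, so $\dim_H(f(A))\le s$. Taking the infimum over such $s$ yields $\dim_H(f(A))\le \dim_H(A)$, as desired.

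There is no real obstacle here; the only point that needs a moment of care is the covering step, making sure that when one replaces $U_i$ by $U_i\cap A$ one still has a valid cover of $A$ with the same (or smaller) diameters, so that the Lipschitz bound on diameters applies cleanly. Otherwise the argument is a direct unpacking of the definitions and is the standard textbook proof (e.g.\ as in Mattila \cite{geometry}), which is why the paper records it merely as a fact rather than giving a proof.
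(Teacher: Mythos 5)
Your proof is correct and is the standard textbook argument; the paper itself states this as a fact without proof, implicitly deferring to Mattila \cite{geometry}, and your derivation---pushing a $\delta$-cover of $A$ forward to an $(L\delta)$-cover of $f(A)$, obtaining $\mathcal{H}^s(f(A))\le L^s\mathcal{H}^s(A)$, and reading off the dimension inequality---is exactly the reference proof.
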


\section{Analytic properties of the function and its range}
Recall our function \[f(x)=\sum_{\sigma\le_{L}x} 2^{-K(\sigma)}.\] In this section we discuss analysis properties of $f$. Let $$\hat{\Omega}(\sigma)=\sum\limits_{\sigma\preceq\tau}2^{-K(\tau)}.$$

\begin{lemma}[H\"olzl et al. \cite{hölzl_merkle_miller_stephan_yu_2020}]\label{22}
	If $x$ is 1-random, then
    $\varliminf\limits_{n\to\infty}2^n\hat{\Omega}(x\upharpoonright n)=0$.
\end{lemma}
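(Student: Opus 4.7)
The plan is to translate the statement into one about a left-c.e.\ martingale and then combine an effective Doob convergence with Miller--Yu's ample excess lemma. Set $m(\sigma):=2^{|\sigma|}\hat{\Omega}(\sigma)$. From the recurrence $\hat{\Omega}(\sigma) = 2^{-K(\sigma)}+\hat{\Omega}(\sigma 0)+\hat{\Omega}(\sigma 1)$ one checks that $m$ is a left-c.e.\ supermartingale with pointwise defect $2^{|\sigma|-K(\sigma)}$, so that
\[
M(\sigma)\;:=\;m(\sigma)+\sum_{j<|\sigma|}2^{j-K(\sigma\uh j)}
\]
is a left-c.e.\ martingale with $M(\lambda)=\Omega<\infty$.

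By the standard effective version of Doob's upcrossing inequality for left-c.e.\ martingales, $\lim_n M(x\uh n)$ exists and is finite whenever $x$ is 1-random. The Miller--Yu ample excess lemma guarantees $\sum_j 2^{j-K(x\uh j)}<\infty$ at every 1-random $x$, so the limit $\ell(x):=\lim_n m(x\uh n)$ also exists and is nonnegative. Since $\liminf_n 2^n\hat{\Omega}(x\uh n)=\ell(x)$, it suffices to prove $\ell(x)=0$ at every 1-random $x$.

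Lebesgue integration yields $\int m(x\uh n)\,d\mu=\Omega_{\geq n}\to 0$, so $\ell=0$ $\mu$-almost everywhere. The remaining task is to promote this from ``$\mu$-a.e.'' to ``every 1-random $x$'' by constructing a Martin--L\"of test covering $\{x:\ell(x)>0\}$. The main obstacle is that the rate $\Omega_{\geq n}\to 0$ is only $\Delta^0_2$-computable, so the naive Markov-based cover $\{x:m(x\uh n_k)>2^{-k}\}$ at a carefully chosen threshold $n_k$ is only $\Sigma^0_2$. I would circumvent this via a Solovay-style construction: for each rational $\epsilon>0$ the minimal antichain
\[
A_\epsilon\;:=\;\{\sigma:m(\sigma)>\epsilon\text{ and }m(\sigma\uh j)\leq\epsilon\text{ for all }j<|\sigma|\}
\]
has Kraft-mass at most $\Omega/\epsilon$ by Doob's supermartingale maximal inequality, providing a rate-free c.e.\ open cover of $\{x:\sup_n m(x\uh n)>\epsilon\}\supseteq\{x:\ell(x)>\epsilon\}$; taking the union across $\epsilon=2^{k}\Omega$ with $k\to\infty$ (after renormalisation) yields the desired Martin--L\"of test that catches every 1-random $x$ with $\ell(x)>0$.
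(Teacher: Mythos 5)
The final Martin--L\"of test does not cover the set you need it to cover, and this is a genuine gap. The test you build, $V_k=\{x:\sup_n m(x\uh n)>2^k\Omega\}$, has $\bigcap_k V_k=\{x:\sup_n m(x\uh n)=\infty\}$; this is just the standard statement that a 1-random real does not let the left-c.e.\ supermartingale $m$ succeed. But a 1-random $x$ with $\ell(x)=\lim_n m(x\uh n)$ equal to a \emph{finite positive} number has $\sup_n m(x\uh n)<\infty$, and therefore lies in $V_k$ for only finitely many $k$. Such an $x$ passes your test, yet it is exactly the case you must rule out. The containment $\{x:\sup_n m(x\uh n)>\epsilon\}\supseteq\{x:\ell(x)>\epsilon\}$ is true for each fixed $\epsilon$, but it does not survive intersecting over growing $\epsilon$, so the ``union across $\epsilon=2^k\Omega$'' step does not yield a cover of $\{x:\ell(x)>0\}$. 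The fundamental obstacle you identified earlier --- that $q_n:=\int m(x\uh n)\,d\mu=\sum_{|\tau|\ge n}2^{-K(\tau)}$ tends to $0$ only at a $\Delta^0_2$ rate --- is not in fact circumvented by the maximal-inequality cover; it is merely sidestepped onto a different set.

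There is a secondary concern with Step 3. ``Effective Doob convergence'' for left-c.e.\ martingales is not a theorem at the level of 1-randomness as far as I am aware: for a left-c.e.\ martingale $M$, detecting a downcrossing of a level $a$ is a $\Pi^0_1$ event, so the upcrossing count is not a c.e.\ process and the classical Doob argument does not directly effectivize to a Martin--L\"of test. Convergence of such martingales along a real is tied in the literature to strictly stronger notions (compare the density-randomness characterization via interval-c.e.\ functions that the paper itself uses in Theorem \ref{25}). The statement you are proving asserts only $\varliminf=0$, precisely because full convergence at 1-randoms is not available, so an argument that goes through ``$\ell(x)$ exists'' is likely to be proving something false for some 1-random $x$. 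A correct route has to control the liminf directly --- for instance by a Kraft--Chaitin construction in the style of the paper's own Lemma \ref{H}, turning the hypothesis ``$m(x\uh n)\ge\epsilon$ for all $n\ge N$'' into short descriptions of cofinally many prefixes of $x$ --- rather than via convergence plus an a.e.\ argument.
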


It is clear that $f$ is continuous and $x<_Ly$ implies $f(x)<f(y)$. Now we give a characterization of the ``differentiable'' points of $f$. 

Since we define $f$ as a function from $2^\omega$ to $[0,1]$, the differentiability of such a function is not well-defined. We need to translate $f$ to a real function $F:[0,1]\to [0,1]$ as follows: let $b:[0,1]\to 2^\omega$ be the function mapping a real $\alpha \in [0,1]$ to its binary expansion in $2^\omega$. For the dyadic rationals less than 1, we choose its binary expansion which has infinitely many 0's. Then we define
$$F=f \circ b.$$
The function $F$ is strictly increasing, right continuous, and the discontinuous points of $F$ are exactly the dyadic rationals. Moreover, $F$ is interval-c.e.\ in the sense of Definition \ref{ice}.

Define an auxiliary function
    \begin{equation*}
    \begin{split}
                \hat{f}(\sigma)&=2^{|\sigma|}(f(\sigma1^\infty)-f(\sigma0^\infty))\\ 
                &=2^{|\sigma|}(\hat{\Omega}(\sigma)-\sum\limits_{m\ge0}2^{-K(\sigma0^m)})=2^{|\sigma|}\sum\limits_{\sigma0^\infty<_L\tau\le_L\sigma1^\infty}2^{-K(\tau)}.
    \end{split}
    \end{equation*}
    Then
	$$2\hat{f}(\sigma)-\hat{f}(\sigma0)-\hat{f}(\sigma1)
			=2^{|\sigma|+1}\sum\limits_{m\ge0}2^{-K(\sigma10^m)}\ge0,$$
	i.e. $\hat{f}$ is a c.e.\ supermartingale (see Section 2 for the definition). 

    \begin{lemma} \label{1111}
        For any $x\in 2^\omega$, $x$ is not 1-random if and only if $\varlimsup\limits_{n\to\infty}\hat{f}(x\upharpoonright n)=\infty$.
    \end{lemma}
    
    \begin{proof}
        Since $\hat{f}$ is a c.e.\ supermartingale, if $\varlimsup\limits_{n\to\infty}\hat{f}(x\upharpoonright n)=\infty$, then $x$ is not 1-random.
        
        On the other hand, suppose $x$ is not 1-random. Then
	\begin{equation}
		\hat{f}(x\upharpoonright n)=2^n\sum\limits_{(x\upharpoonright n)0^\infty<_L\tau\le_L(x\upharpoonright n)1^\infty}2^{-K(\tau)}\ge2^n\cdot 2^{-K((x\upharpoonright n)1)}\ge2^{n-K(x\upharpoonright n)-c}.
		\nonumber
	\end{equation}
Since $x$ is not 1-random, for each $d$ there is $n$ such that $n-K(x\upharpoonright n)>d$, hence $\varlimsup\limits_{n\to\infty}\hat{f}(x\upharpoonright n)=\infty$.
    \end{proof}

\begin{lemma}\label{32}
If $x$ is not 1-random, then $F$ is not differentiable at $x$.
\end{lemma}

\begin{proof}
Given $x$ which is not 1-random and suppose $F$ is differentiable at $x$. We may assume that $x$ is not a dyadic rational since $F$ is not continuous at dyadic rational points. Suppose $x=0.y$ for $y \in 2^\omega$. Define $y^n_i=(y\upharpoonright n)i^\infty$ for $i=0,1$. Let $x_i^n=0.y_i^n$, so $x_0^n$ and $x_1^n$ are the two $n$-bit dyadic rationals closest to $x$. Then 
	\begin{equation}
		\begin{split}
			\frac{F(x^n _1)-F(x)}{x^n _1-x}+\frac{F(x)-F(x^n _0)}{x-x^n _0}
            & \geq\frac{F(x^n _1)-F(x)}{x^n _1-x^n _0}+\frac{F(x)-F(x^n _0)}{x^n _1-x^n _0}\\
			&=\frac{F(x^n _1)-F(x^n_0)}{x^n _1-x^n _0}\\
            &\ge 2^{n}(f(y^n_1)-f(y^n_0))=\hat{f}(y\upharpoonright n).
			\nonumber
		\end{split}
	\end{equation}
    Then $\varlimsup\limits_{n\to\infty}\hat{f}(y\upharpoonright n) \leq 2F'(x) < \infty,$
              which is impossible by Lemma \ref{1111}.
\end{proof}

Given a string $\sigma\in 2^{<\omega}$. Suppose $m$ is the largest number less than $|\sigma|$ such that $\sigma(m)=0$ and $\sigma(j)=1$ for all $j>m$. Define $\hat{\sigma}=\sigma \upharpoonright m$ if $m$ exists.
\begin{lemma}\label{H} Suppose $x\in 2^\omega$ is 1-random. Then

	(i) $\lim\limits_{n\to\infty}2^n\sum\limits_{m\ge0}2^{-K((x\upharpoonright n)0^m)}=0$.

    (ii) $\lim\limits_{n\to\infty}2^n\sum\limits_{m\ge0}2^{-K(\widehat{(x\upharpoonright n)}10^m)}=0$.
\end{lemma}

\begin{proof}
	(i). Define $G(\sigma)=2^{|\sigma|}\sum\limits_{m\ge0}2^{-K(\sigma0^m)}$. Suppose (i) does not hold, there is a 1-random $x$ and $\epsilon>0$, such that there are infinitely many $n$ satisfying $G(x\upharpoonright n)>\epsilon$. Define $$E_n=\{\sigma:|\sigma|<n+1\land G(\sigma)>\epsilon\},$$ and let $E=\bigcup\limits_{n\in\omega}E_n$. Divide $E_n$ into at most $2^{n+1}$ many disjoint subsets as follows:\par
	Let $E_{n,0}=\emptyset$, if $E_{n,i}$ are defined for all $i<j$, let $\sigma_j$ be the lexicographically least in $E_n\big\backslash\bigcup\limits_{i<j}E_{n,i}$, and define $$E_{n,j}=\{\tau:\tau \in E_n\big\backslash\bigcup\limits_{i<j}E_{n,i} \land \sigma_j\preceq\tau\prec\sigma_j 0^\infty\}.$$
    For any distinct $\tau_1$ and $\tau_2$ in $E_{n,i}$, they are of different lengths. And if $|\tau_1|<|\tau_2|$, we have $\tau_1\prec\tau_2$. Then
	\begin{equation}
		\begin{split}
			S_n:=\sum\limits_{\sigma\in E_n}2^{-|\sigma|}\epsilon
			=\sum\limits_{i<2^{n+1}}\sum\limits_{\sigma\in E_{n,i}}2^{-|\sigma|}\epsilon
			&\le\sum\limits_{i<2^{n+1}}\sum\limits_{\sigma\in E_{n,i}}2^{-|\sigma|}G(\sigma_i)\\
			&=\sum\limits_{i<2^{n+1}}\sum\limits_{\sigma\in E_{n,i}}\bigg(2^{-|\sigma|+|\sigma_i|}\sum\limits_{m\ge0}2^{-K(\sigma_i 0^m)}\bigg)\\
			&\le\sum\limits_{i<2^{n+1}}2\cdot\sum\limits_{m\ge0}2^{-K(\sigma_i 0^m)}\le2.
			\nonumber
		\end{split}
	\end{equation}
	The last inequality holds because by the choice of $\sigma_i$, if $i \ne j$, then for all $m_0,m_1$, $\sigma_i0^{m_0} \ne \sigma_j0^{m_1}$.
    Then we have $S=\sum\limits_{\sigma\in E}2^{-|\sigma|}\cdot\epsilon=\lim\limits_{n\to\infty}S_n\le2$ and $\{(|\sigma|+1-\log \epsilon, \sigma)\}_{\sigma\in E}$ forms a KC set. Then $K(x\upharpoonright n)\le n+1-\log \epsilon+c$ for infinitely many $n$, a contradiction.

    (ii). Similar to (i). Define $H(\sigma)=2^{|\sigma|}\sum\limits_{m\ge0}2^{-K(\hat{\sigma}10^m)}$ if $\hat{\sigma}$ exists, otherwise define $H(\sigma)=0$. Suppose (ii) does not hold, then there is a 1-random $x$ and $\epsilon>0$, such that there are infinitely many $n$ satisfying $H(x\upharpoonright n)>\epsilon$. Define $$F_n=\{\sigma:|\sigma|<n+1\land H(\sigma)>\epsilon\},$$ and let $F=\bigcup\limits_{n\in\omega}F_n$. Divide $F_n$ into at most $2^{n+1}$ many disjoint subsets as follows:\par
	Let $F_{n,0}=\emptyset$, if $F_{n,i}$ are defined for all $i<j$, let $\sigma_j$ be the lexicographically least in $F_n\big\backslash\bigcup\limits_{i<j}F_{n,i}$, and define $$F_{n,j}=\{\tau:\tau \in F_n\big\backslash\bigcup\limits_{i<j}F_{n,i} \land \sigma_j\preceq\tau\prec\sigma_j 1^\infty\}.$$
    For any distinct $\tau_1$ and $\tau_2$ in $F_{n,i}$, they are of different lengths. And if $|\tau_1|<|\tau_2|$, we have $\tau_1\prec\tau_2$. And if $i \ne j$, then $\widehat{\sigma_i}10^{m_0} \ne \widehat{\sigma_j}10^{m_1}$ for all $m_0, m_1 \in \omega$. Then 
    $T_n:=\sum_{\sigma \in F_n}2^{-|\sigma|}\epsilon \le 2$ by the same proof as in (i). Then $\{(|\sigma|+1-\log \epsilon, \sigma)\}_{\sigma\in F}$ forms a KC set. Then $K(x\upharpoonright n)\le n+1-\log \epsilon+c$ for infinitely many $n$, a contradiction.
\end{proof}

\begin{lemma}\label{0}
    Suppose that $F$ is differentiable at $x$, then $F'(x)=0$.
\end{lemma}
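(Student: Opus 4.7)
The plan is to combine the symmetric difference quotient argument already appearing in Lemma \ref{32} with the $\liminf$ estimate from Lemma \ref{22}. First, note that by (the contrapositive of) Lemma \ref{32}, differentiability of $f$ at $x$ forces $x$ to be 1-random, and in particular $x$ is not eventually constant. Hence for every sufficiently large $n$ the approximants $y^n_0:=(x\upharpoonright n)0^\infty$ and $y^n_1:=(x\upharpoonright n)1^\infty$ are distinct from $x$, satisfy $y^n_0<x<y^n_1$ in $[0,1]$, and $y^n_1-y^n_0=2^{-n}$, while both sequences converge to $x$.

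Next, write $L:=f'(x)$. The differentiability at $x$ gives $\frac{f(y^n_1)-f(x)}{y^n_1-x}\to L$ and $\frac{f(x)-f(y^n_0)}{x-y^n_0}\to L$. The ``symmetric'' quotient
$$\frac{f(y^n_1)-f(y^n_0)}{y^n_1-y^n_0}=\frac{(y^n_1-x)}{y^n_1-y^n_0}\cdot\frac{f(y^n_1)-f(x)}{y^n_1-x}+\frac{(x-y^n_0)}{y^n_1-y^n_0}\cdot\frac{f(x)-f(y^n_0)}{x-y^n_0}$$
is a convex combination of two sequences tending to $L$, so it also tends to $L$. On the other hand, directly from the definition of $\hat{f}$, one has $\frac{f(y^n_1)-f(y^n_0)}{y^n_1-y^n_0}=\hat{f}(x\upharpoonright n)$, so $\hat{f}(x\upharpoonright n)\to L$.

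Finally, from the identity $\hat{f}(\sigma)=2^{|\sigma|}\bigl(\hat{\Omega}(\sigma)-\sum_{m\ge 0}2^{-K(\sigma 0^m)}\bigr)$ one gets the trivial upper bound $\hat{f}(\sigma)\le 2^{|\sigma|}\hat{\Omega}(\sigma)$, and Lemma \ref{22} yields $\varliminf_{n}2^n\hat{\Omega}(x\upharpoonright n)=0$ since $x$ is 1-random. Therefore $\varliminf_n \hat{f}(x\upharpoonright n)=0$, which combined with the convergence $\hat{f}(x\upharpoonright n)\to L$ forces $L=0$.

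I do not anticipate a genuine obstacle here: the heaviest lifting—both the characterization of 1-randomness via $\hat{f}$ (Lemma \ref{1111}) and the $\liminf$ estimate for $\hat{\Omega}$ (Lemma \ref{22})—has been done, and the only care needed is in verifying that the two one-sided difference quotients can be combined into the symmetric one, which is exactly the convex combination argument already used in Lemma \ref{32}.
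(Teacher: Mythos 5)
Your proof is correct and takes essentially the same approach as the paper's: both reduce to the symmetric difference quotient $\hat{f}(x\upharpoonright n)$ via the convex-combination identity, use Lemma \ref{32} to infer 1-randomness, and then invoke the $\varliminf$ estimate of Lemma \ref{22}. The only cosmetic difference is that you show $\hat{f}(x\upharpoonright n)\to f'(x)$ outright (and make explicit the bound $\hat{f}(\sigma)\le 2^{|\sigma|}\hat{\Omega}(\sigma)$ needed to pass from Lemma \ref{22} to $\varliminf_n\hat{f}(x\upharpoonright n)=0$), whereas the paper argues by contradiction from $f'(x)=A>0$ and lower-bounds $\hat{f}(x\upharpoonright n)$ by $A-\epsilon$.
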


\begin{proof}
    Suppose not, let $F'(x)=A>0$. Suppose $x=0.y$ for $y \in 2^\omega$. Define $y^n_i=(y\upharpoonright n)i^\infty$ for $i=0,1$, and let $x_i^n=0.y_i^n$. Then for any $\epsilon$ with $0<\epsilon<A$, there is $N$ such that for all $n>N$,
    $$\frac{F(x^n _1)-F(x)}{x^n _1-x},\frac{F(x)-F(x^n _0)}{x-x^n _0}>A-\epsilon>0.$$

    We may assume that $y \upharpoonright N \ne 1^N$, since $y$ is 1-random by Lemma \ref{32}. Then
\begin{equation}
	\begin{split}
            \hat{f}(y\upharpoonright n)&=2^{n}(f((y\upharpoonright n)1^\infty)-f((y\upharpoonright n)0^\infty))\\
            &=2^{n}(F(x_1^n)-F(x_0^n)-\sum_{j \ge 0}2^{-K(\widehat{(y\upharpoonright n)}10^j)})\\
		&=\frac{x_1^n-x}{x_1^n-x_0^n}\frac{F(x_1^n)-F(x)}{x_1^n-x}+\frac{x-x_0^n}{x_1^n-x_0^n}\frac{F(x)-F(x_0^n)}{x-x_0^n}-2^{n}\sum_{j \ge 0}2^{-K(\widehat{(y\upharpoonright n)}10^j)}\\
		&>A-\epsilon-2^{n}\sum_{j \ge 0}2^{-K(\widehat{(y\upharpoonright n)}10^j)}.
		\nonumber
	\end{split}
\end{equation}
Then $\varliminf\limits_{n\to\infty}\hat{f}(y\upharpoonright n) \ge A-\epsilon$ by Lemma \ref{H} (ii). However, since $y$ is 1-random,  $\varliminf\limits_{n\to\infty}\hat{f}(y\upharpoonright n)\le\varliminf\limits_{n\to\infty}2^n\hat{\Omega}(y\upharpoonright n)=0$ by Lemma \ref{22}, a contradiction.
\end{proof}

\begin{theorem}
A real $x\in [0,1]$ is density random if and only if $F$ is differentiable at $x$. In this case $F'(x)=0$.
\end{theorem}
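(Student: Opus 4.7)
The plan naturally splits into two directions, and in both the value $f'(x) = 0$ is supplied by Lemma \ref{0}. For the direction ``density random implies $f$ is differentiable at $x$'', I would observe that $f$ is interval-c.e.\ in the sense of Theorem \ref{25}: after subtracting the constant $f(0^\infty)$ to normalize, the increment $f(y) - f(z) = \sum_{z <_L \sigma \le_L y} 2^{-K(\sigma)}$ is a uniformly left-c.e.\ real over rational pairs $z <_L y$, because $K$ admits a uniform computable approximation from above. Theorem \ref{25} then yields that $f'(x)$ exists for every density random $x$, and Lemma \ref{0} forces this derivative to equal $0$.

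For the reverse direction, suppose $f$ is differentiable at $x$. Lemma \ref{32} (contrapositive) gives that $x$ is 1-random, and Lemma \ref{0} gives $f'(x) = 0$. The remaining step is to upgrade ``1-random with $f'(x) = 0$'' to density random. My plan is to argue the contrapositive: suppose $x$ is 1-random but fails to have density $1$ in some $\Pi^0_1$ class $P$ containing it. Then $U = 2^\omega \setminus P$ is a c.e.\ open set with $x \notin U$, and there exists $\epsilon > 0$ such that $\mu(U \cap [x\upharpoonright n]) > \epsilon \cdot 2^{-n}$ for infinitely many $n$. For each such $n$ I would choose a rational point $y_n$ in $U \cap [x\upharpoonright n]$ on the lex-right of $x$, so that $|y_n - x| \le 2^{-n}$ and the interval $(x, y_n]$ captures a substantial portion of the $f$-mass on $U \cap [x\upharpoonright n]$, with the goal of showing $(f(y_n) - f(x))/(y_n - x)$ stays bounded below along the subsequence, contradicting $f'(x) = 0$.

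The main technical challenge is producing the quantitative conversion from the $\mu$-mass lower bound $\mu(U \cap [x\upharpoonright n]) \ge \epsilon 2^{-n}$ into an $f$-mass lower bound of order $2^{-n}$. Since $U$ is enumerated by a c.e.\ prefix-free antichain $(\sigma_i)$ with $K(\sigma_i) \le^+ K(i)$, and since the identity for $\hat{f}$ gives $\mu_f([\sigma_i]) \ge 2^{-K(\sigma_i 1)}$, a Kraft--Chaitin style accounting of the enumeration of $U$ should deliver a lower bound of the required order. Lemma \ref{H} will be used to ensure that the ``left edge'' contribution $2^n \sum_m 2^{-K((x\upharpoonright n)0^m)}$ vanishes for 1-random $x$, so that this quantity does not spoil the estimate on the full $f$-mass of $[x\upharpoonright n]$ carried by $U$. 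I expect the hardest step to be tightening the Kraft--Chaitin estimate so that it recovers an absolute constant $\epsilon'$ rather than a decaying factor like $1/n^2$; this is where the incompressibility of initial segments of $x$ (from 1-randomness) should enter to rule out overly compressible enumerations concentrating the mass of $U$.
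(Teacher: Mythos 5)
Your outline matches the paper's argument: forward direction via Theorem~\ref{25} (since $f$ is interval-c.e.), derivative zero via Lemma~\ref{0}, and reverse direction by contraposition through a $\Pi^0_1$ class $P$ with $x \in P$ but density deficit, using Lemma~\ref{32}, Lemma~\ref{0} and Lemma~\ref{H}. The one point on which you should be reassured, and slightly corrected, is the final quantitative step. You worry that the Kraft--Chaitin estimate might only give a decaying factor like $1/n^2$ unless the 1-randomness of $x$ is invoked, and you refer to the bound $K(\sigma_i)\le^+ K(i)$. Neither is the relevant mechanism. Since $W$ (the prefix-free c.e.\ antichain generating $2^\omega\setminus P$) is c.e.\ and prefix-free, the set $\{(|\sigma|,\sigma):\sigma\in W\}$ is itself a Kraft--Chaitin set, so there is a \emph{single constant} $c$ with $K(\sigma)\le|\sigma|+c$ for every $\sigma\in W$; no logarithmic overhead and no randomness of $x$ enters here. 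This immediately gives
\[
\sum_{\tau\in W,\ \tau\succeq x\upharpoonright n} 2^{-K(\tau)}\ \ge\ 2^{-c}\sum_{\tau\in W,\ \tau\succeq x\upharpoonright n}2^{-|\tau|}\ =\ 2^{-c}\,\mu\bigl([W]\cap[x\upharpoonright n]\bigr)\ \ge\ 2^{-c}\cdot 2^{-n}\epsilon
\]
for the infinitely many $n$ witnessing the density deficit, which is the absolute constant you were hoping for. The role of 1-randomness (via Lemma~\ref{H}) is solely to kill the left-edge term $2^n\sum_m 2^{-K((x\upharpoonright n)0^m)}$, exactly as you anticipated.

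A second, smaller issue: you propose to pick a single rational $y_n$ on the lex-right of $x$ inside $U\cap[x\upharpoonright n]$. The $\mu$-mass of $U\cap[x\upharpoonright n]$ could concentrate entirely to the lex-left of $x$, in which case a one-sided difference quotient on the right misses it. The paper avoids this by bracketing $x$ with $y^n_0=(x\upharpoonright n)0^\infty$ and $y^n_1=(x\upharpoonright n)1^\infty$ and observing that
\[
\frac{f(y^n_1)-f(x)}{d(y^n_1,x)}+\frac{f(x)-f(y^n_0)}{d(x,y^n_0)}+2^n\sum_{m}2^{-K((x\upharpoonright n)0^m)}\ \ge\ 2^n\sum_{\tau\succeq x\upharpoonright n}2^{-K(\tau)},
\]
so both sides of $x$ are accounted for simultaneously. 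Your plan can be repaired by, at each $n$, choosing the side carrying at least half of $\mu(U\cap[x\upharpoonright n])$, but the two-sided bookkeeping is cleaner.
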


\begin{proof}
Since $F$ is interval-c.e., it is differentiable at density random reals by Theorem \ref{25}. On the other hand, suppose that $x$ is not density random but $F$ is differentiable at $x$. Then $x$ is 1-random and $F'(x)=0$ by Lemma \ref{32} and \ref{0}.
By Theorem \ref{Khan}, $x$ is not a dyadic density-one point. Suppose $x=0.y$ where $y \in 2^\omega$. Then $y$ is not a density-one point in Cantor space. Then there is a $\Pi^0_1$ class $P$ and some $\epsilon>0$ such that $y\in P$ and $\varliminf\limits_{n\to\infty}2^n\mu([y\upharpoonright n]\cap P)<1-\epsilon$. Since $2^\omega\backslash P$ is $\Sigma^0_1$, take $W\subseteq 2^{<\omega}$ be c.e.\ prefix-free such that $[W]=2^\omega\backslash P$, where $[W]=\bigcup_{\sigma\in W}[\sigma]$. By the KC theorem for $\{(|\sigma|,\sigma)\}_{\sigma\in W}$, there is a constant $c$ such that $K(\sigma)\le|\sigma|+c$ for all $\sigma\in W$. Define $y^n_i=(y\upharpoonright n)i^\infty$ for $i=0,1$ and let $x^n_i=0.y^n_i$. By Lemma \ref{H}, choose $N$ such that for any $n>N$,
$$\frac{F(x^n_1)-F(x)}{x^n_1-x}+\frac{F(x)-F(x^n_0)}{x-x^n_0}+2^n\sum\limits_{m\ge0}2^{-K((y\upharpoonright n)0^m)}< 2^{-c-1}\epsilon.$$
Fix an $n>N$ such that $\mu([y\upharpoonright n]\cap[W])\ge2^{-n}\epsilon$. Then
	\begin{equation}
		\begin{split}
			2^{-c-1}\epsilon&>\frac{F(x^n_1)-F(x)}{x^n_1-x}+\frac{F(x)-F(x^n_0)}{x-x^n_0}+2^n\sum\limits_{m\ge0}2^{-K((y\upharpoonright n)0^m)}\\
            & \ge 2^n(F(x^n_1)-F(x)+F(x)-F(x^n_0))+2^n\sum\limits_{m\ge0}2^{-K((y\upharpoonright n)0^m)}\\
			&\ge2^n\sum\limits_{\tau\succeq y\upharpoonright n}2^{-K(\tau)}\\
			&\ge2^n\sum\limits_{\tau\succeq y\upharpoonright n\land\tau\in W}2^{-K(\tau)}\ge2^n\sum\limits_{\tau\succeq y\upharpoonright n\land\tau\in W}2^{-|\tau|-c}\ge2^n\ 2^{-n-c}\epsilon=2^{-c}\epsilon,
			\nonumber
		\end{split}
	\end{equation}
a contradiction.
\end{proof}

Next we study $f(2^\omega)$ from the perspective of mathematical analysis. A class of reals $P$ is \textit{nowhere dense} if its closure has empty interior. A class of reals $P$ is \textit{perfect} if it is closed and does not contain any isolated point, i.e.\ every real in $P$ is a limit of distinct reals in $P$. 
\begin{proposition}\label{null}
	$f(2^\omega)$ is null, nowhere dense and perfect.
\end{proposition}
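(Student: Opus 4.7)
The plan is to construct $f(2^\omega)$ as a Cantor-like subset of the interval $[f(0^\infty),f(1^\infty)]$ and read off all three properties at once. First I would check that $f$ is continuous and strictly monotone with respect to $<_L$: continuity follows from convergence of $\sum_\sigma 2^{-K(\sigma)}$ (if $x_n$ and $x$ agree on their first $N$ characters, then $\sigma\le_L x_n\iff\sigma\le_L x$ for every $\sigma$ with $|\sigma|\le N$, so $|f(x_n)-f(x)|\le 2\sum_{|\sigma|>N}2^{-K(\sigma)}\to 0$), while strict monotonicity is immediate since for $x<_L y$ the string $y\upharpoonright(m+1)$, where $m$ is the first position of disagreement, lies strictly to the right of $x$ but is an initial segment of $y$. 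Hence $f$ is a continuous bijection from the compact space $2^\omega$ onto its image in the Hausdorff space $\mathbb{R}$, so it is a homeomorphism and $f(2^\omega)$ inherits perfectness from $2^\omega$.

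For nullness I would identify the gaps of $f(2^\omega)$. For each finite string $\sigma$, the two points $\sigma 0 1^\infty$ and $\sigma 1 0^\infty$ are $<_L$-adjacent in $2^\omega$ (an elementary case analysis on the first position where a hypothetical intermediate $x$ could disagree with $\sigma$ shows no such $x$ exists), so strict monotonicity forces the open interval $G_\sigma:=(f(\sigma 0 1^\infty),\,f(\sigma 1 0^\infty))$ to be disjoint from $f(2^\omega)$. Unwinding the definition shows that the finite $\tau$ with $\sigma 0 1^\infty <_L \tau \le_L \sigma 1 0^\infty$ are exactly $\tau=\sigma 1 0^k$ for $k\ge 0$, so $|G_\sigma|=\sum_{k\ge 0}2^{-K(\sigma 1 0^k)}$. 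Conversely, any maximal open subinterval of $[f(0^\infty),f(1^\infty)]\setminus f(2^\omega)$ must arise from a $<_L$-adjacent pair and hence equals some $G_\sigma$. Since every finite string containing a $1$ factors uniquely as $\sigma 1 0^k$ (take $\sigma$ to be the prefix preceding the last $1$), one obtains
\[
\sum_\sigma |G_\sigma| \;=\; \sum_{\tau\text{ contains a }1} 2^{-K(\tau)} \;=\; \sum_\tau 2^{-K(\tau)} \,-\, \sum_{n\ge 0} 2^{-K(0^n)} \;=\; f(1^\infty)-f(0^\infty).
\]
Thus the disjoint gaps $G_\sigma$ exhaust the full length of the enclosing interval, giving $\mu(f(2^\omega))=0$. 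Finally, since $f(2^\omega)$ is compact and Lebesgue-null in $\mathbb{R}$, it has empty interior and is therefore nowhere dense.

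The main obstacle I expect is the combinatorial bookkeeping around $<_L$ between finite and infinite sequences: verifying the $<_L$-adjacency of $\sigma 0 1^\infty$ and $\sigma 1 0^\infty$ in $2^\omega$, and pinning down exactly which $\tau$ contribute to the difference $f(\sigma 1 0^\infty)-f(\sigma 0 1^\infty)$. Both reduce to routine case analyses on the definition of $<_L$, but they are the crux on which the measure computation rests.
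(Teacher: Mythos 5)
Your proof is correct and takes essentially the same approach as the paper: your gaps $G_\sigma=(f(\sigma 01^\infty),f(\sigma 10^\infty))$ are exactly the paper's intervals $I_\sigma$, and nullness follows from the same computation that their lengths $\sum_{k\ge 0}2^{-K(\sigma 10^k)}$ exhaust the enclosing interval $[f(0^\infty),f(1^\infty)]$ (your bookkeeping here is in fact slightly cleaner than the paper's, which writes the right endpoint as $\Omega$). The remaining differences are cosmetic: you get perfectness via injectivity/compactness (homeomorphism with $2^\omega$) and nowhere-density from ``closed and null,'' whereas the paper exhibits explicit approximating sequences $f((x\upharpoonright n)0^\infty)\to f(x)$ and argues with the gaps directly.
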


\begin{proof}
	For all $\sigma \in 2^{<\omega}$, define $I_\sigma=(f(\sigma01^\infty),f(\sigma10^\infty))$. Then $I_\sigma \cap f(2^\omega)= \emptyset$. Let $\Omega=\Omega_U=\sum_\sigma 2^{-K_U(\sigma)}$ where $U$ is the optimal machine used to define $f$. Since $f(2^\omega)\subseteq[\sum\limits_{m\ge0}2^{-K(0^m)},\Omega]$ and $I_\sigma$ are pairwise disjoint,
	\begin{equation}
		\begin{split}
			\mu(f(2^\omega))&\leq \Omega-\sum\limits_{m\ge0}2^{-K(0^m)}-\sum\limits_{\sigma\in2^{<\omega}}\mu(I_\sigma)\\
			&=\Omega-\sum\limits_{m\ge0}2^{-K(0^m)}-\sum\limits_{\sigma\in2^{<\omega}}(f(\sigma10^\infty)-f(\sigma01^\infty))\\
			&=\Omega-\sum\limits_{m\ge0}2^{-K(0^m)}-\sum\limits_{\sigma\in2^{<\omega}}\sum\limits_{m\ge0}2^{-K(\sigma10^m)}\\
			&\leq\Omega-\Omega=0.
			\nonumber
		\end{split}
	\end{equation}
	Then $f(2^\omega)$ is null.\par
    Since $f$ is continuous, $f(2^\omega)$ is closed. Since $f(2^\omega)$ is null and any non-empty open set is not null, $f(2^\omega)$ has empty interior, therefore nowhere dense. \par
	Since injective continuous functions map perfect sets to perfect sets and $f$ is such a function, $f(2^\omega)$ is perfect. 
\end{proof}

Given any prefix-free machine (not necessarily optimal) $M$, we define 
$f_M:x\mapsto \sum_{\sigma\le_L x}2^{-K_M(\sigma)}$ as before. The same proof as above shows that $f_M(2^\omega)$ is null.

In the rest of this section we show $\dim_H(f(2^\omega))=1$ by constructing a sequence of functions from $f(2^\omega)$ to a generalized Cantor set $C_\gamma$ (see subsection 2.3 for the definition). Each function satisfies the Lipschitz condition and hence preserves the Hausdorff dimension.

\begin{theorem}\label{hausdorff}
$\dim_H(f(2^\omega))=1$.
\end{theorem}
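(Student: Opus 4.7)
The plan is to prove $\dim_H(f(2^\omega)) \geq 1$ by carrying out the scheme foreshadowed just before the theorem: for each scale parameter $\gamma > 1$ I construct a Lipschitz surjection $\psi_\gamma : f(2^\omega) \to C_\gamma$, so that Fact \ref{Lip} gives $\dim_H(f(2^\omega)) \geq \dim_H(C_\gamma)$; then Fact \ref{gamma} shows $\dim_H(C_\gamma) = \log 2 / \log(2\gamma/(\gamma-1)) \to 1$ as $\gamma \to \infty$. The opposite inequality $\dim_H(f(2^\omega)) \leq 1$ is immediate from $f(2^\omega) \subseteq \mathbb{R}$.

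Since $f$ is continuous and strictly monotone in $<_L$, it is a bijection $f : 2^\omega \to f(2^\omega)$, so the inverse $f^{-1}$ is well-defined. Let $\phi_\gamma : 2^\omega \to C_\gamma$ be the canonical homeomorphism sending a binary sequence $x$ to the unique point of $\bigcap_n I^\gamma_{x \upharpoonright n}$, where $I^\gamma_\sigma$ denotes the level-$|\sigma|$ basic interval of $C_\gamma$. I set $\psi_\gamma := \phi_\gamma \circ f^{-1}$; as a composition of bijections this is automatically a bijection onto $C_\gamma$, so only the Lipschitz property needs checking.

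The Lipschitz estimate is the core of the argument and where I expect the main difficulty to sit. Given $y_1 < y_2$ in $f(2^\omega)$, let $x_i := f^{-1}(y_i)$, let $\sigma$ be the longest common prefix of $x_1$ and $x_2$, and set $n := |\sigma|$. Since $x_1$ starts with $\sigma 0$ and $x_2$ with $\sigma 1$, the finite string $\sigma 1$ satisfies $x_1 <_L \sigma 1 \leq_L x_2$ and therefore appears as a summand in $y_2 - y_1 = \sum_{x_1 <_L \tau \leq_L x_2} 2^{-K(\tau)}$, yielding the lower bound $y_2 - y_1 \geq 2^{-K(\sigma 1)}$. Meanwhile, $\phi_\gamma(x_1)$ and $\phi_\gamma(x_2)$ both lie in $I^\gamma_\sigma$, whose length is $((\gamma-1)/(2\gamma))^n$. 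Using the universal upper bound $K(\sigma 1) \leq n + 2\log n + O(1)$, the Lipschitz ratio satisfies
\[
\frac{|\psi_\gamma(y_1) - \psi_\gamma(y_2)|}{|y_1 - y_2|} \leq \left(\frac{\gamma-1}{2\gamma}\right)^n 2^{K(\sigma 1)} \leq C \cdot n^2 (1 - 1/\gamma)^n,
\]
which is uniformly bounded in $n$: the geometric contraction $((\gamma-1)/(2\gamma))^n = 2^{-n}(1-1/\gamma)^n$ of $C_\gamma$ has to beat the worst-case growth $2^n \cdot \mathrm{poly}(n)$ coming from prefix-free complexity, and the slack factor $(1 - 1/\gamma)^n < 1$ absorbs the polynomial overhead for every $\gamma > 1$. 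With $\psi_\gamma$ Lipschitz and surjective in hand, sending $\gamma \to \infty$ and invoking Facts \ref{Lip} and \ref{gamma} closes the argument.
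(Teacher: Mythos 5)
Your proposal is correct and rests on exactly the same numerical estimate as the paper: the scale $((\gamma-1)/(2\gamma))^n$ of the level-$n$ interval of $C_\gamma$ is compared against the lower bound $2^{-K(\sigma1)} \gtrsim 2^{-n}/\mathrm{poly}(n)$ for the separation between two points of $f(2^\omega)$ whose preimages first branch at depth $n$, and the factor $(1-1/\gamma)^n$ swallows the polynomial. Where you differ is the packaging. The paper builds a sequence of piecewise-linear homeomorphisms $g_n$ of $[0,1]$ onto the $n$-th approximation $C^\gamma_n$, estimates the gradients on the Cantor pieces and, separately, on the removed gaps, and then passes to the limit $g=\lim_n g_n$ to obtain a Lipschitz map on $f(2^\omega)$. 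You instead define the limiting map directly as $\phi_\gamma\circ f^{-1}$ (the canonical coding of $2^\omega$ into $C_\gamma$ precomposed with the inverse of $f$) and check the Lipschitz bound on an arbitrary pair of points, which eliminates both the gap estimate and the limit argument — the gaps simply never enter because you only evaluate on $f(2^\omega)$, and the lower bound $y_2-y_1\ge 2^{-K(\sigma1)}$ falls out immediately because $\sigma1\le_L x_2$ but $\sigma1\not\le_L x_1$. This is a genuine streamlining of the same idea. (The paper works with $\gamma>3$ rather than $\gamma>1$; that restriction plays no role since one sends $\gamma\to\infty$ anyway, and your version holds for all $\gamma>1$.)
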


We shall use the following fact in the proof of the theorem.
\begin{fact}\label{0110}
    If $f(0^\infty)<x<f(1^\infty)$ and $x \notin f(2^\omega)$, then there is $\sigma \in 2^{<\omega}$ such that 
    $$x \in (f(\sigma01^\infty),f(\sigma10^\infty)).$$
\end{fact}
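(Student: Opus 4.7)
The plan is to perform a binary search down the ``Cantor tree'' structure of $f(2^\omega)$. The key structural observation, already used in the proof of Proposition \ref{null}, is that each closed ``$\sigma$-piece'' $[f(\sigma 0^\infty), f(\sigma 1^\infty)]$ decomposes as the two closed sub-pieces $[f(\sigma 0^\infty), f(\sigma 01^\infty)]$ and $[f(\sigma 10^\infty), f(\sigma 1^\infty)]$ together with the open middle gap $I_\sigma := (f(\sigma 01^\infty), f(\sigma 10^\infty))$, and the gap $I_\sigma$ is disjoint from $f(2^\omega)$. Geometrically, a point of the overall interval either lies in some such gap, in which case the fact is witnessed, or else it must lie at every stage inside one of the two shrinking closed sub-pieces, in which case continuity of $f$ will force it to be in $f(2^\omega)$.

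Concretely, starting from $\sigma_0 = \lambda$, I would recursively build a sequence $\sigma_n$ of finite strings maintaining the strict invariant $f(\sigma_n 0^\infty) < x < f(\sigma_n 1^\infty)$; this holds at $n = 0$ by the hypothesis $f(0^\infty) < x < f(1^\infty)$. At stage $n$, if $x \in I_{\sigma_n}$, halt and output $\sigma = \sigma_n$. Otherwise $x$ must lie in one of the two closed sub-pieces of $[f(\sigma_n 0^\infty), f(\sigma_n 1^\infty)]$; since each of the four endpoint values $f(\sigma_n 0^\infty), f(\sigma_n 01^\infty), f(\sigma_n 10^\infty), f(\sigma_n 1^\infty)$ belongs to $f(2^\omega)$ while $x$ does not, $x$ lies in the open interior of that sub-piece, so extending $\sigma_n$ by the appropriate bit $0$ or $1$ preserves the strict invariant.

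If the recursion halts at some finite stage we are done. If it never halts, the nested $\sigma_n$ converge to an infinite sequence $\alpha \in 2^\omega$ with $|\sigma_n| \to \infty$, so both $\sigma_n 0^\infty$ and $\sigma_n 1^\infty$ converge to $\alpha$ in $2^\omega$, and the continuity of $f$ (noted at the start of Section 3) gives $f(\sigma_n 0^\infty), f(\sigma_n 1^\infty) \to f(\alpha)$. Squeezing against the invariant yields $f(\alpha) = x$, contradicting $x \notin f(2^\omega)$. I do not anticipate a genuine technical obstacle; the only point requiring care is preserving strict inequalities throughout the recursion, which is precisely what the hypothesis $x \notin f(2^\omega)$ supplies, by excluding every endpoint that could otherwise collapse the interval to a single point.
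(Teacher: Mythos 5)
Your proof is correct and follows essentially the same route as the paper: both arguments descend through the nested closed pieces $[f(\sigma 0^\infty),f(\sigma 1^\infty)]$, noting that avoiding every middle gap forces $x$ into a shrinking nested sequence of sub-pieces, and then use continuity of $f$ to conclude $f(\alpha)=x$ for the limit branch $\alpha$, contradicting $x\notin f(2^\omega)$. The only cosmetic difference is that you run the binary search directly with a halting case and a squeeze on the endpoints, while the paper phrases the whole argument as a contradiction with the interval lengths tending to $0$.
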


\begin{proof}
    Suppose not. For any $\sigma$, define $$I^0_\sigma=[f(\sigma0^\infty),f(\sigma01^\infty)] \text{ and }I^1_\sigma=[f(\sigma10^\infty),f(\sigma1^\infty)].$$
    Then there is a sequence of strings $\{\sigma_n\}_{n \in \omega}$ such that $|\sigma_n|=n$ and $\sigma_n \prec \sigma_{n+1}$ for all $n$ and a 0-1 sequence $\{a_n\}_{n \in \omega}$ such that $x \in I^{a_n}_{\sigma_n}$. Then define $y=\cup_n \sigma_n$. Since $f$ is continuous, and $|I^{a_n}_{\sigma_n}| \to 0$ when $n \to \infty$, we have $f(y)=x$, a contradiction.
\end{proof}

\begin{remark}\label{0311}
From Fact \ref{0110}, if we let $A_\sigma=(f(\sigma01^\infty),f(\sigma10^\infty))$, then we have 
$$[0,1]-f(2^\omega)=[0,f(0^\infty))\cup\bigcup_{\sigma \in 2^{<\omega}} A_\sigma\cup(f(1^\infty),1].$$
Intuitively, this means $f(2^\omega)$ can be constructed in a similar way as the Cantor ternary set, which is the basic idea of the proof below.
\end{remark}

\begin{proof} (of Theorem \ref{hausdorff}).
 Fix a scale $\gamma>3$. Recall that $C^\gamma= \bigcap_n C_n^\gamma$ as in subsection 2.2. We view $C_n^\gamma$ as the $n$-th level in the construction of $C^\gamma$ and suppose $C_n^\gamma=\bigcup_{i \le 2^{n}}[a_{n,i},b_{n,i}]$, where $$a_{n,0}<b_{n,0}<a_{n,1}<\dots<a_{n,2^{n}-1}<b_{n,2^{n}-1}.$$
Similarly let $f_n=\bigcup_{\sigma \in 2^{n}}[f(\sigma0^\infty),f(\sigma1^\infty)]$, so $f(2^\omega)=\bigcap_n f_n$ using Remark \ref{0311}, and we regard $f_n$ as the $n$-th level in the construction of $f(2^\omega)$.

Then for any $n$, we construct a monotone increasing piecewise linear function $g_n:[0,1]\to [0,1]$ such that each interval $[f(\sigma0^\infty),f(\sigma1^\infty)]$ is linearly mapped to $[a_{n,i},b_{n,i}]$ where $\sigma \in 2^n$ is the $i$-th interval counting from the left. And the gap between two intervals in $f_n$ is mapped to the gap between the corresponding intervals in $C_n^\gamma$ in a natural way.  

By Fact \ref{0110}, $g_n(x)$ are defined for all $n$ if and only if $x \in f(2^\omega)$. We claim that there is a constant $c$ such that for each $n\in \mathbb{N}$ and each $x,y\in f(2^\omega)$, $$|g_n(x)-g_n(y)|\le c|x-y|.$$ 

 Suppose the claim holds. Let $h_n=(\frac{\gamma-1}{2\gamma})^n$ be the length of every interval in $C_n^\gamma$. Then $h_n \to 0$ as $n \to \infty$. Hence if $x \in f(2^\omega)$, then $\lim_n g_n(x)$ exists.  Define a function $g: f(2^\omega) \to [0,1]$ by $g(x)=\lim_n g_n(x)$. Then $g$ is a monotone increasing bijection from $f(2^\omega)$ to $C^\gamma$. The function $g$ satisfies the Lipschitz condition since given $x,y \in f(2^\omega)$ and for any $\epsilon>0$ there is some $n$ such that
    $$ |g(x)- g(y)| \le |g_n(x)-g_n(y)|+\epsilon \le c|x-y| + \epsilon, $$ where $c$ is the constant in the claim above and hence $g$ is also Lipschitz. By Fact \ref{gamma} and \ref{Lip}, $\dim_H(f(2^\omega))\ge -\frac{\log2}{\log\frac{\gamma-1}{2\gamma}}$. Since $\gamma >3$ is arbitrary, let $\gamma \to \infty$, then $\dim_H(f(2^\omega))=1$.

 It remains to prove the claim. Since each $g_n$ is piecewise linear. The Lipschitz constant is only determined by the gradient of each piece of the maps. 
 
 The linear map from $[f(\sigma0^\infty),f(\sigma 1^\infty)]$ to $[a_{n,i},b_{n,i}]$ where $\sigma \in 2^n$ is the $i$-th interval counting from the left has gradient 
   \begin{equation}
        \begin{split}
            \frac{h_n}{f(\sigma 1^\infty)-f(\sigma0^\infty)}
            \le \frac{(\frac{\gamma-1}{2\gamma})^n}{2^{-K(\sigma1)}}&\le c_1(\frac{\gamma-1}{2\gamma})^n\cdot 2^{|\sigma|+2\log|\sigma|}\\
    &=c_1(\frac{\gamma-1}{2\gamma})^n\cdot 2^{n+2\log n}\le c_1 (\frac{\gamma-1}{\gamma})^n\cdot n^2 \le c_2
        \end{split}
        \nonumber
    \end{equation}
when $n$ is sufficiently large.

Similarly, the linear map from the gap between two intervals firstly appeared in the $n$-th level has gradient 
$$\frac{h_{n-1}-2h_n}{f(\sigma 10^\infty)-f(\sigma01^\infty)}
            \le \frac{(\frac{\gamma-1}{2\gamma})^{n-1}-2(\frac{\gamma-1}{2\gamma})^{n}}{2^{-K(\sigma1)}}
    \le c_3\frac{1}\gamma{}(\frac{\gamma-1}{2\gamma})^{n-1}\cdot 2^{|\sigma|+2\log|\sigma|}\le c_4$$
for some $\sigma \in 2^{n-1}$ and sufficiently large $n$.
\end{proof}

\section{The complexity of the function values}

Recall the Omega operator $x\mapsto\Omega^x=\sum_{V^x(\sigma)\downarrow}2^{-|\sigma|}$ defined in \cite{downey2005relativizing}, where $V$ is a fixed universal prefix-free oracle machine. It is widely known that the following properties holds for $\Omega$: \begin{enumerate}[(i)]
    \item $\Omega^x$ is left-$x$-c.e.; 
    \item $\Omega^x$ is $x$-random; 
    \item $x\oplus\Omega^x\equiv_T x'$. 
\end{enumerate}

In this section we investigate if the corresponding properties holds for $f$. 

Property (i) also holds for $f$: 

\begin{fact}\label{41}
    Given a real $x$, then $f(x)$ is left-$x$-c.e.\ and $x$ is right-$f(x)$-c.e.. Moreover, if $x$ is a left-c.e.\ real, then $f(x)$ is a left-c.e.\ real.
    \end{fact}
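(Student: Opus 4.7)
The plan is to address the three assertions in turn; for the first and third I would give routine monotone-approximation arguments, while for the second I would exploit the structure of $f$.

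For $f(x)$ left-$x$-c.e., I would observe that with $x$ as an oracle the relation ``$\sigma \le_L x$'' is decidable uniformly in $\sigma$ (by comparing $\sigma$ with $x\upharpoonright|\sigma|$ in lex order), and $2^{-K_U(\sigma)}$ is left-c.e.\ uniformly in $\sigma$ via the standard stage-$s$ approximation $K_s$. The partial sums $f_s(x) := \sum_{\sigma \le_L x,\, |\sigma|\le s} 2^{-K_s(\sigma)}$ would then be $x$-computable rationals, nondecreasing in $s$, and tending to $f(x)$. For the ``moreover'' clause, the same recipe works once I replace the $x$-decidable predicate by a c.e.\ one: if $x$ is left-c.e., then $W := \{\sigma : \sigma \le_L x\}$ is already c.e., since a left-c.e.\ approximation $x_s \uparrow 0.x$ certifies $\sigma \in W$ as soon as $\sigma 0^\infty <_L x_s$ (with the initial-segment case absorbed analogously). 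Substituting a c.e.\ approximation $W_s$ for the predicate ``$\sigma \le_L x$'' in the definition of $f_s(x)$ then delivers $f(x)$ as a left-c.e.\ real.

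The interesting step will be showing $x$ is right-$f(x)$-c.e. The crux is the equivalence
\[
\tau >_L x \iff f(\tau 0^\infty) > f(x) \qquad \text{for every finite string }\tau,
\]
which holds because $\tau >_L x$ iff $\tau 0^\infty >_L x$ as reals and $f$ is strictly monotone. Since $f(\tau 0^\infty)$ is left-c.e.\ uniformly in $\tau$ and ``$q > f(x)$'' is $\Sigma^0_1$ in the oracle $f(x)$ for any rational $q$, I would then conclude that the set $E := \{\tau : f(\tau 0^\infty) > f(x)\}$ is c.e.\ in $f(x)$. Every $\tau \in E$ yields $0.\tau \ge 0.x$, so the family $\{0.\tau : \tau \in E\} \cup \{1\}$ is an $f(x)$-c.e.\ collection of upper bounds for $0.x$; its running minimum is an $f(x)$-computable nonincreasing sequence of rationals converging to some $r \ge 0.x$. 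To show $r = 0.x$, I would use the strings $(x\upharpoonright n)1$ at positions $n$ with $x(n) = 0$: each such string lies in $E$ and supplies the upper bound $0.(x\upharpoonright n) + 2^{-(n+1)}$, which tends to $0.x$. The sole real with no such positions is $x = 1^\infty$, which is computable and hence trivially right-$f(x)$-c.e.

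The main (and mild) obstacle I anticipate is cleanly treating the dyadic boundary in the second part, where one could have $0.\tau = 0.x$ for some $\tau >_L x$ when $x$ is eventually constant. Such $x$ are computable, so the statement is automatic there; everywhere else the strict monotonicity of $f$ makes the chain of equivalences above go through without subtlety.
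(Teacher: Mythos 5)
Your proposal is correct and takes essentially the same route as the paper, which simply observes that $f(x)$ is left-$x$-c.e. by definition and deduces $x$ is right-$f(x)$-c.e. from the strict monotonicity of $f$; you have merely unfolded the one-line argument by exhibiting the $f(x)$-c.e.\ set $E=\{\tau: f(\tau 0^\infty)>f(x)\}$ and the convergent sequence of upper bounds, with the eventually-constant edge case handled separately. The ``moreover'' clause via the c.e.-ness of $\{\sigma:\sigma\le_L x\}$ when $x$ is left-c.e.\ is likewise the intended argument, which the paper leaves unstated.
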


\begin{proof}
It is clear that $f(x)$ is left-$x$-c.e.\ and hence $x$ is right-$f(x)$-c.e.\ since $f$ is monotone increasing.
\end{proof}

Property (ii) does not generally hold for $f$, as the following simple observation shows that $f(x)$ cannot be arbitrarily random. In Section 5 we will further investigate the randomness properties of $f(x)$, see Theorem \ref{lizhang} and Theorem \ref{0418}. 

\begin{fact}\label{02}
        $f(2^\omega)$ is a $\Pi^{0}_1(\emptyset')$ class. Hence every real in $f(2^\omega)$ is not $\emptyset'$-Kurtz random.
\end{fact}

\begin{proof}
In Remark \ref{0311}, each interval $A_\sigma$ is uniformly $\Sigma^{0,\emptyset'}_1$ since $f(\sigma i^\infty)$ are  uniformly left-c.e.\ where $\sigma \in 2^{<\omega}$ and $i=0,1$.
\end{proof}

Property (iii) for $f$ also generally fails, since by (i) in the following Theorem, if $x\geq_T\emptyset'$ then $f(x)$ is computable in $x$. Yet we are able to obtain other Turing degree estimations regarding the relation of $x$ and $f(x)$. 

\begin{theorem}\label{402}
Given a real $x$.

(i) $x'\ge_T\emptyset'\oplus x\ge_T f(x)$;

(ii) $f(x)'\ge_T\emptyset'\oplus f(x)\ge_T x$;

(iii) $f(x)\oplus x\ge_T \emptyset'$.
\end{theorem}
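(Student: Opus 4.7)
For (i), the inequality $x' \ge_T \emptyset' \oplus x$ is standard. For $\emptyset' \oplus x \ge_T f(x)$: by Fact \ref{41} the real $f(x)$ is left-$x$-c.e., and the same reasoning shows $\Omega - f(x) = \sum_{\sigma >_L x} 2^{-K(\sigma)}$ is also left-$x$-c.e. Since Theorem \ref{KS} makes $\Omega$ exactly $\emptyset'$-computable, writing $f(x) = \Omega - (\Omega - f(x))$ shows $f(x)$ is right-$(\emptyset' \oplus x)$-c.e.\ as well; being both left- and right-c.e.\ relative to $\emptyset' \oplus x$ forces $f(x) \le_T \emptyset' \oplus x$.

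For (ii), the first inequality is again standard. For $\emptyset' \oplus f(x) \ge_T x$, I would recover $x \upharpoonright n$ for each $n$ from the interval structure of $f(2^\omega)$. Since each $\sigma i^\infty$ is computable, $f(\sigma i^\infty)$ is exactly $\emptyset'$-computable by specializing the argument of (i). The $2^n$ closed intervals $\{[f(\sigma 0^\infty), f(\sigma 1^\infty)] : \sigma \in 2^n\}$ are pairwise disjoint in $\mathbb{R}$, separated by the nonempty gaps $I_\tau$ from Proposition \ref{null}, and strict monotonicity of $f$ in $\le_L$ gives $x \in [\sigma]$ iff $f(x) \in [f(\sigma 0^\infty), f(\sigma 1^\infty)]$. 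For every $\tau \ne \sigma$ in $2^n$ one of $f(x) < f(\tau 0^\infty)$ or $f(x) > f(\tau 1^\infty)$ must hold, and each such inequality is $\Sigma^0_1$ in $\emptyset' \oplus f(x)$; running these eliminations in parallel and waiting until $2^n - 1$ are witnessed identifies the remaining string as $x \upharpoonright n$.

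For (iii), the plan is to show $f(x) \oplus x \ge_T f(0^\infty)$ and invoke Solovay completeness of $f(0^\infty)$. The key point is that $0^m \le_L x$ for every $m$ and every $x$, so all weights $2^{-K(0^m)}$ appear in the sum defining $f(x)$. From $x$ enumerate $f(x)_s := \sum_{\sigma \le_L x,\, K_s(\sigma) \le s} 2^{-K_s(\sigma)}$, and use the exact oracle $f(x)$ to effectively locate stages $s$ with $f(x) - f(x)_s < 2^{-n-1}$. A standard deficit argument (any $\sigma \le_L x$ with $K(\sigma) = k \le n$ but $K_s(\sigma) > k$ would leave a gap of at least $2^{-k-1} \ge 2^{-n-1}$ in $f(x)_s$) then yields $K_s(\sigma) = K(\sigma)$ for every $\sigma \le_L x$ with $K(\sigma) \le n$. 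Specializing to $\sigma = 0^m$ and letting $n$ vary recovers the full sequence $\{K(0^m)\}_m$.

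To then compute $f(0^\infty) = \sum_m 2^{-K(0^m)}$ exactly, I would bound its tail $\sum_{m : K(0^m) > n} 2^{-K(0^m)}$ by $f(x) - f_n(x)$, where $f_n(x) := \sum_{\sigma \le_L x,\, K(\sigma) \le n} 2^{-K(\sigma)}$ becomes $(f(x) \oplus x)$-computable after the waiting step above and $f(x) - f_n(x) \to 0$; this gives $f(0^\infty) \le_T f(x) \oplus x$. Because $K(0^m) = K(m) + O(1)$, the function $m \mapsto K(0^m)$ is a Solovay function, so Theorem \ref{Solo} makes $f(0^\infty)$ $1$-random; it is also left-c.e.\ by Fact \ref{41}, so Theorem \ref{KS} yields Solovay completeness and hence $\emptyset' \le_T f(0^\infty) \le_T f(x) \oplus x$. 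The main obstacle is precisely this effective tail bound: $f(0^\infty)$ is easy to obtain as a left-c.e.\ real from $f(x) \oplus x$, but pinning down its exact value uses the inclusion $\{0^m\}_m \subseteq \{\sigma : \sigma \le_L x\}$ to dominate its tail by that of $f(x)$.
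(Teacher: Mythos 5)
Your proofs of (i) and (ii) are correct and are essentially the fleshed-out version of the paper's one-line hint that the endpoints $f(\sigma i^\infty)$ are uniformly $\emptyset'$-computable: for (i) you sandwich $f(x)$ between $\emptyset'$-computable approximations from above and below (your decomposition $f(x) = \Omega - (\Omega - f(x))$ is an equivalent packaging), and for (ii) you use strict monotonicity and the nonempty gaps $I_\sigma$ to read off $x$ from $f(x)$ bit by bit.

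For (iii) your route is genuinely different from the paper's, and it works. Both arguments hinge on the same ``deficit'' observation: from $f(x)\oplus x$ one can find, for each $n$, a stage $s$ at which $f(x)-f(x)_s<2^{-n-1}$, and at such a stage every $\sigma\le_L x$ with $K(\sigma)\le n$ already has $K_s(\sigma)=K(\sigma)$. The paper exploits this directly by building an auxiliary Kraft-Chaitin set $V$ keyed to the enumeration of $\emptyset'$ (putting down short $V$-descriptions for all strings of a carefully chosen length $m_n<4^n$ whenever $n$ enters $\emptyset'$), and then deriving a contradiction with the optimality of $U$ if the ``stabilization stage'' for $f(x)$ could lie to be wrong about $\emptyset'$ infinitely often. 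You instead observe that $\{0^m\}_m\subseteq\{\sigma:\sigma\le_L x\}$ for every $x$, so the stabilization lets $f(x)\oplus x$ compute $m\mapsto K(0^m)$ and then $f(0^\infty)$ exactly (using $f(x)-f_n(x)$ as a computable tail bound, which is correct since the tail of $f(0^\infty)$ is a subsum of $f(x)-f_n(x)$, and $f_n(x)$ becomes an oracle-computable dyadic rational once the stage is found); then you conclude via the Solovay-function criterion (Theorem \ref{Solo}) that $f(0^\infty)$ is 1-random and left-c.e., hence Solovay complete by Theorem \ref{KS}, hence $\ge_T\emptyset'$. Your proof is more modular and black-boxes the completeness of random left-c.e. reals (Ku\v{c}era--Slaman), where the paper's is self-contained and reveals the direct mechanism by which $f(x)\oplus x$ decodes $\emptyset'$; both are valid, and the underlying combinatorial core (a good $f(x)$-approximation locks in small $K$-values) is the same.
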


\begin{proof}
(i) and (ii).  $\{{f(\sigma i^\infty)}\}_{\sigma \in 2^{<\omega},i=0,1}$ are uniformly $\emptyset'$-computable. 

	(iii). For every $n$, if $n\in\emptyset'_{s_n+1}-\emptyset'_{s_n}$ for some $s_n$, pick the least $m_n<4^n$ such that $\sum_{|\sigma|=m_n}2^{-K_{s_n+1}(\sigma)}\le4^{-n}$. This $m_n$ exists since $\sum_{\sigma}2^{-K_{s_n+1}(\sigma)}\le1$. Enumerate $(K_{s_n+1}(\sigma)-n+1,\sigma)$ for all $|\sigma|=m_n$ in a set $V$. Then $V$ is a KC set since 
    $$\sum\limits_{n\ge0}\sum\limits_{|\sigma|=m_n}2^{-K_{s_n+1}(\sigma)+n-1}=\sum\limits_{n\ge0}2^{n-1}\sum\limits_{|\sigma|=m_n}2^{-K_{s_n+1}(\sigma)}\le2^{-1}\sum\limits_{n\ge0}2^n\cdot4^{-n}=1.$$
	
	Define $f_s(x)=\sum_{\sigma\le_L x\land|\sigma|\leq s}2^{-K_s(\sigma)}$. We claim that for almost all $n$, if $f(x)-f_s(x)<2^{-8^n}$ at stage $s>4^n$, then $n\in\emptyset'$ if and only if $n\in\emptyset'_{s+1}$.\par
	Suppose not, there are infinitely many $n$ and $s>4^n$ such that $f(x)-f_s(x)<2^{-8^n}$ but $n\in\emptyset'_{s_n+1}-\emptyset'_{s_n}$ for some $s_n\ge s+1$. Then for any such $n$, for every $y\le_Lx$ we have $K_V(y\upharpoonright m_n)\le K_{s_n+1}(y\upharpoonright m_n)-n+1$, and 
	$K(y\upharpoonright m_n)\le 2m_n< 2\cdot4^n <8^n$.
	 Since $f(x)-f_s(x)<2^{-8^n}$, $K_{s_n+1}(y\upharpoonright m_n)=K(y\upharpoonright m_n)$ since $s>4^n>m_n$. Then we have $\varlimsup\limits_{n\rightarrow\infty}K(y\upharpoonright m_n)-K_V(y\upharpoonright m_n)=\infty$, which is impossible. Hence $f(x)\oplus x\ge_T \emptyset'$.
\end{proof}

\begin{corollary}\label{comp}
     If $f(x)$ is computable, then $x$ is right-c.e.\ and Turing complete.
\end{corollary}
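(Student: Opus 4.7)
The corollary follows immediately by combining two results already established in the excerpt, so my plan is simply to chain them together in the natural way.

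First, for the right-c.e.\ conclusion, I would invoke Fact~\ref{41}, which states that $x$ is right-$f(x)$-c.e.\ for every real $x$. If $f(x)$ is computable, then $f(x)$-c.e.\ coincides with c.e., so $x$ is right-c.e.\ unconditionally.

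Second, for Turing completeness, I would invoke Theorem~\ref{402}(iii), which gives $f(x)\oplus x\ge_T \emptyset'$. Under the hypothesis that $f(x)$ is computable, $f(x)\oplus x\equiv_T x$, and hence $x\ge_T \emptyset'$, i.e.\ $x$ is Turing complete.

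There is essentially no obstacle here: the corollary is a direct bookkeeping consequence of Fact~\ref{41} and Theorem~\ref{402}(iii). The only thing worth flagging in the write-up is that ``right-c.e.'' is meant in the unrelativized sense, which follows because relativization to a computable oracle is vacuous.
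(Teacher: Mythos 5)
Your proof is correct and is exactly the argument the paper intends: Fact~\ref{41} gives right-c.e.-ness once the oracle $f(x)$ is computable, and Theorem~\ref{402}(iii) then yields $x\ge_T\emptyset'$; combined with $x\le_T\emptyset'$ (which follows from $x$ being right-c.e.), this gives Turing completeness. The paper states the corollary immediately after Theorem~\ref{402} with no proof precisely because it is this two-line consequence.
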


It is not known whether there is $x$ such that $f(x)$ is computable (or just right-c.e.), see the discussion in Subsection 6.2. However, by the following proposition, $f(x)$ is not right-c.e.\ for almost all $x$.
\begin{proposition}[with Slaman]
    If $f(x)$ is right-c.e., then $x$ is not 1-random.
\end{proposition}
\begin{proof}
     Given a right-c.e. real $q$ with approximation $(q_s)_{s\in \omega}$, we construct a prefix-free machine $M$ with coding constant $c_M$ ($(\forall\sigma) [K(\sigma)\le K_M(\sigma)+c_M$]) by the Machine Existence Theorem. By the Recursion Theorem, $c_M$ can be used in the construction of $M$. 
     
     Let $J_\sigma = (f(\sigma0^\infty),f(\sigma1^\infty))$ and $J_{\sigma,s} = (f_s(\sigma0^\infty),f_s(\sigma1^\infty))$. Whenever $q_s\in J_{\sigma,s}$ and $J_{\sigma,s}$ is sufficiently small, $M$ gives a short description of some string $\tau<_L\sigma$ to force $U$ give a short description of $\tau$ later, which implies $f(\sigma0^\infty)-f_s(\sigma0^\infty)$ is large enough to make $q\not\in J_\sigma$.

     \
    
     Construction of $M$ by a KC set $W$: 
     At stage $s+1$. If there is $\sigma$ such that $q_s\in J_{\sigma,s}$ and $\mu(J_{\sigma,s})\le 2^{-|\sigma|-c_M-1}$, choose the least $\sigma$ which has not been used and enumerate $(|\sigma|,\tau)$ into $W$, where $\tau$ is the shortest string such that $\tau\le_L \sigma$ and $K_s(\tau)\ge|\sigma|+c_M+1$. Declare that all the strings extending $\sigma$ have been \textit{used}. 

     \

     Since we only put $(|\sigma|, \tau)$ into $W$ if $\sigma$ has not been used, the weight of $W$ is no more than $1$. 
     
     Suppose there is some 1-random $x$ such that $f(x)=q$, then there is some $n$ such that $\mu (J_{x\upharpoonright n})< \hat\Omega(x\upharpoonright n)< 2^{-n-c_M-1}$ by Lemma \ref{22}. Then there is some stage $s$ such that $q\in J_{x\upharpoonright n,s}$ and $\mu(J_{x\upharpoonright n,s})\le 2^{-n-c_M-1}$. By the construction of $M$, $f(\sigma0^\infty)-f_s(\sigma0^\infty)\ge 2^{-n-c_M-1}$ and $q\not\in J_{x\upharpoonright n}$, a contradiction.
\end{proof}

\section{The randomness of the function values}

Recall that $\Omega^x$ is always $x$-random, while we have shown that $f(x)$ cannot be arbitrarily random. In this section, we investigate how random is the image $f(x)$. We first give positive results by characterizing all cases where $f(x)$ is $x$-random, and derive corollaries from this result. We then also give negative results showing that there are cases where $f(x)$ is not even 1-random. 

\subsection{Positive results}

The following is the main theorem for the randomness properties of $f$. It gives a characterization of the weakly low for $K$ reals. It can also be viewed as a generalization of the classical result that $\Omega$ is 1-random since for any computable (and hence weakly low for $K$) real $x$, $f(x)$ is $x$-random. Many corollaries follow from this theorem.
\begin{theorem} \label{lizhang}
            For any real $x$, $x$ is weakly low for $K$ if and only if $f(x)$ is $x$-random.
\end{theorem}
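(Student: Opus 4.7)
The plan splits the two implications. For the backward direction ($f(x)$ is $x$-random $\Rightarrow x$ is weakly low for $K$), I transfer randomness upward via Solovay reducibility from $f(x)$ to $\bar{\Omega}:=\sum_{\sigma\in 2^{<\omega}} 2^{-K(\sigma)}$, and then invoke Theorem~\ref{Miller}. For the forward direction, I apply the Solovay-function criterion (Theorem~\ref{Solo}) to a natural $x$-computable enumeration of $\{\sigma : \sigma \le_L x\}$, and exploit the observation that the sequence $(0^k)_{k\in\mathbb{N}}$ always lies inside this set to reduce the required Solovay condition to plain weak lowness for $K$.

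For the backward direction, write $\bar{\Omega} = f(x) + g(x)$ with $g(x) := \sum_{\sigma >_L x} 2^{-K(\sigma)}$; both summands are left-$x$-c.e. The natural stage-$s$ approximations satisfy $\bar{\Omega}_s = f_s(x) + g_s(x)$, so
\[ \bar{\Omega} - \bar{\Omega}_s \;=\; (f(x)-f_s(x)) + (g(x)-g_s(x)) \;\ge\; f(x)-f_s(x), \]
giving $f(x) \le_S^x \bar{\Omega}$ with Solovay constant $1$. Since Solovay reducibility transports $x$-randomness upward among left-$x$-c.e.\ reals (relativized Theorem~\ref{KS}), the $x$-randomness of $f(x)$ propagates to $\bar{\Omega}$. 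Applying Theorem~\ref{Solo} unrelativized to $K$ itself shows that $\bar{\Omega}$ is a left-c.e.\ $1$-random real, so by Theorem~\ref{KS} it is Solovay-equivalent to $\Omega$; hence $\Omega$ is $x$-random, and Theorem~\ref{Miller} yields that $x$ is weakly low for $K$.

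For the forward direction, fix an $x$-computable enumeration $\sigma_0,\sigma_1,\ldots$ of $\{\sigma : \sigma \le_L x\}$ in lex-length order (the predicate $\sigma \le_L x$ is $x$-decidable) and set $h(n):=K(\sigma_n)$. Then $h$ is right-$x$-c.e., $\sum_n 2^{-h(n)} = f(x) < \infty$, and since $n \leftrightarrow \sigma_n$ is $x$-computable in both directions one has $K^x(n) = K^x(\sigma_n) + O(1) \le K(\sigma_n) + O(1) = h(n) + O(1)$. By Theorem~\ref{Solo} it suffices to show $h(n) \le K^x(n) + O(1)$ for infinitely many $n$. For this, observe that $0^k \le_L x$ for every $k$ and every $x$: if $0^k$ is not an initial segment of $x$, then the first $1$ in $x$ appears at some position $j<k$, and then $0^k <_L x$ strictly at coordinate $j$. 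Hence each $0^k$ equals some $\sigma_{n_k}$, and the trivial computable equivalences $K(0^k)=K(k)+O(1)$ and $K^x(0^k)=K^x(k)+O(1)$ transport plain weak lowness (a constant $c$ with $K(k)\le K^x(k)+c$ for infinitely many $k$) into
\[ h(n_k) \;=\; K(0^k) \;\le^+\; K^x(0^k) \;=^+\; K^x(n_k) \]
for infinitely many $k$, completing the Solovay condition.

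The main conceptual point to watch is that plain weak lowness for $K$ does not a priori imply weak lowness along an arbitrary $x$-computable subfamily of strings: the analogous theorem for $\hat{\Omega}$ in \cite{hölzl_merkle_miller_stephan_yu_2020} genuinely requires the strictly stronger hypothesis of weak lowness of $x$ along $\{x\upharpoonright n\}_n$. What rescues the present theorem is that $\{\sigma : \sigma \le_L x\}$ automatically contains the ``universal'' subfamily $\{0^k\}_k$, which is complexity-equivalent to $\mathbb{N}$ itself; plain weak lowness therefore specializes for free to weak lowness along $\{0^k\}$, and a fortiori along our enumeration.
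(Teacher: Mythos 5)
Your forward direction is essentially the paper's own argument: the paper also fixes an $x$-computable bijection $e\colon\omega\to\{\sigma:\sigma\le_L x\}$, sets $g(n)=K(e(n))$, uses the observation that $\{0^k\}_k\subseteq\{\sigma:\sigma\le_Lx\}$ to convert plain weak lowness for $K$ into the Solovay-function condition, and then invokes Theorem~\ref{Solo}. Your concluding remark about why this works here but not for $\hat\Omega$ is exactly the right conceptual point.

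Your backward direction is correct but takes a genuinely different, and more roundabout, route. The paper simply applies the converse clause of Theorem~\ref{Solo}: if $f(x)=\sum_n 2^{-g(n)}$ is $x$-random, then $g$ is an $x$-Solovay function, so there are infinitely many $n$ with $K(e(n))=g(n)\le^+K^x(n)\le^+K^x(e(n))$, which is weak lowness for $K$ directly. You instead show $f(x)\le_S^x\bar\Omega$ via the termwise decomposition $\bar\Omega_s=f_s(x)+g_s(x)$, push $x$-randomness up the Solovay order to $\bar\Omega$, identify $\bar\Omega$ with $\Omega$ up to Solovay equivalence via Kučera--Slaman, and finish with Miller's low-for-$\Omega$ characterization (Theorem~\ref{Miller}). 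This works, and it has a nice conceptual payoff (it makes the low-for-$\Omega$ picture explicit), but it uses more machinery than needed: the paper's backward direction avoids Theorem~\ref{Miller} entirely and is the perfect mirror image of the forward direction. One small caveat: the fact you need, that $a\le_S^x b$ with $a$ $x$-random and both left-$x$-c.e.\ forces $b$ to be $x$-random, is the relativization of the elementary Solovay monotonicity $K(a\upharpoonright n)\le^+K(b\upharpoonright n)$, not really a consequence of Kučera--Slaman (Theorem~\ref{KS}) as your parenthetical suggests; Kučera--Slaman is only needed for the separate step $\bar\Omega\equiv_S\Omega$. The proof is otherwise sound.
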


\begin{proof}
Let $e:\omega \rightarrow \{\sigma \in 2^{<\omega}:\sigma\le_L x \}$ be an $x$-computable bijection. Define a right-$x$-c.e.\ function $g$ by $g(n)=K(e(n))$. Then $f(x)=\sum_n2^{-g(n)}$.

$\Rightarrow:$
Suppose that $x$ is  weakly low for $K$. Since $e$ is $x$-computable, $K^x (n)\le^+ K(e(n))$. We claim that there are infinitely many $n$ such that $ g(n)\le^+ K^x(n)$. And hence $g$ is an $x$-Solovay function (see Section 2 for the definition) and $f(x)=\sum_n2^{-g(n)}$ is $x$-random by Theorem \ref{Solo}. 

In fact, there are infinitely many $s$ such that $K(0^s) \le^+ K^x(0^s)$ since $x$ is weakly low for $K$.
Hence there are infinitely many $n$ such that $e(n)$ is of the form $0^s$ and $$ g(n)=K(0^s)\le^+K^x(0^s)= K^x (e(n))\le^+ K^x(n).$$ 

$\Leftarrow:$
   Suppose $f(x)$ is $x$-random. By Theorem \ref{Solo}, $g$ is an $x$-Solovay function. Then there are infinitely many $n$ such that $$K(e(n))=g(n) \le^+ K^x(n)\le^+K^x (e(n)).$$
   So $x$ is weakly low for $K$. 
\end{proof}

\begin{corollary}
     Suppose $x<_Ly$. Then $x \oplus y$ is weakly low for $K$ if and only if $f(y)-f(x)$ is $x \oplus y$-random.
\end{corollary}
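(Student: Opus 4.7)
The proof will closely follow that of Theorem~\ref{lizhang}, replacing the oracle $x$ by $z := x \oplus y$ and the set $\{\sigma : \sigma \le_L x\}$ by
\[ T := \{\sigma \in 2^{<\omega} : x <_L \sigma \le_L y\}. \]
The set $T$ is $z$-computable (since $\sigma \le_L y$ is $y$-decidable and $\sigma >_L x$ is $x$-decidable) and infinite. Fix a $z$-computable bijection $e \colon \omega \to T$ and define $g(n) = K(e(n))$. Then $g$ is right-$z$-c.e. with $\sum_n 2^{-g(n)} = \sum_{\sigma \in T} 2^{-K(\sigma)} = f(y)-f(x)$, and $K^z(n) \le^+ g(n)$ because $K^z(n) =^+ K^z(e(n)) \le^+ K(e(n))$. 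By Theorem~\ref{Solo}, it then suffices to show that $z$ is weakly low for $K$ if and only if $g(n) \le^+ K^z(n)$ for infinitely many $n$.

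For the forward direction, let $n_0$ be the first position at which $x$ and $y$ differ, and set $\sigma_0 := (x \upharpoonright n_0)\, 1$. One checks that $\sigma_0 0^k \in T$ for every $k \ge 0$: this string is $>_L x$ (it has $1$ at position $n_0$ while $x$ has $0$) and $\le_L y$ (it agrees with $y$ up to position $n_0$ and then has only $0$'s). Because $\sigma_0$ is a fixed finite $z$-computable string, $K(\sigma_0 0^k) =^+ K(k)$ and $K^z(\sigma_0 0^k) =^+ K^z(k)$; and since $e$ is a $z$-computable bijection, $K^z(e^{-1}(\sigma_0 0^k)) =^+ K^z(\sigma_0 0^k) =^+ K^z(k)$. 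Weak lowness of $z$ supplies infinitely many $k$ with $K(k) \le^+ K^z(k)$, and for any such $k$, writing $n = e^{-1}(\sigma_0 0^k)$ gives
\[ g(n) = K(\sigma_0 0^k) \le^+ K(k) \le^+ K^z(k) =^+ K^z(n). \]

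For the reverse direction, suppose $g(n) \le^+ K^z(n)$ for infinitely many $n$. Using $K^z(n) =^+ K^z(e(n))$ again, this gives $K(e(n)) \le^+ K^z(e(n))$ for infinitely many $n$, so infinitely many strings $\sigma \in T$ satisfy $K(\sigma) \le^+ K^z(\sigma)$. Pulling back along a standard computable bijection $2^{<\omega} \leftrightarrow \omega$ (which preserves $K$ and $K^z$ up to additive constants) yields infinitely many $m \in \omega$ with $K(m) \le^+ K^z(m)$, so $z$ is weakly low for $K$.

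The main obstacle is the forward direction: we need a $z$-computable infinite family in $T$ whose $K$- and $K^z$-complexities both coincide (up to $O(1)$) with those of a natural-number index, so that weak lowness of $z$ applies directly. The prefix-and-pad trick $\sigma_0 0^k$ plays exactly the role that $0^s$ does in the proof of Theorem~\ref{lizhang}.
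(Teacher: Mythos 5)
Your proposal is correct and takes essentially the same approach the paper intends; the paper's own proof is just the one-line remark that the argument is the same as that of Theorem~\ref{lizhang}. Your replacement of the witness family $\{0^s\}$ by $\{\sigma_0 0^k\} \subseteq T$, where $\sigma_0$ is the one-bit extension of the longest common prefix of $x$ and $y$, is precisely the natural adaptation needed, since $0^s$ need not lie in $T$.
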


\begin{proof}
    The proof is the same as Theorem \ref{lizhang}.
    \end{proof}

\begin{corollary}\label{411}
    $\{ x: f(x) \text{ is not 1-random} \}$ is null.  
\end{corollary}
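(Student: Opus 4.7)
The plan is to combine Theorem \ref{lizhang} with the fact that weakly lowness for $K$ is a measure-one property. By Theorem \ref{lizhang}, if $x$ is weakly low for $K$ then $f(x)$ is $x$-random, and $x$-randomness trivially implies $1$-randomness. Therefore
\[
\{x : f(x) \text{ is not } 1\text{-random}\} \subseteq \{x : x \text{ is not weakly low for } K\},
\]
so it suffices to show that the right-hand side is null.

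For this I would invoke the remark recorded just after Theorem \ref{Miller}: every $2$-random real is low for $\Omega$, and by Miller's theorem this is equivalent to being weakly low for $K$. Since the class of $2$-random reals has Lebesgue measure one, so does the class of reals that are weakly low for $K$, and its complement is null. Combining this with the inclusion above yields the corollary.

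I do not expect any obstacle here; the content is really a one-line deduction from Theorem \ref{lizhang} together with the measure-theoretic fact about $2$-randoms, and no further computation or construction is required.
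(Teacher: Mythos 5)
Your proof is correct and follows essentially the same route as the paper: both apply Theorem \ref{lizhang} together with the fact (via Theorem \ref{Miller}) that every $2$-random real is weakly low for $K$, so the set in question is contained in a null set. The only cosmetic difference is that you state the inclusion into the non--weakly-low-for-$K$ reals while the paper states it directly into the non--$2$-random reals; the argument is the same.
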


\begin{proof}
    Let $x$ be 2-random. Then by Theorem \ref{Miller}, $x$ is weakly low for $K$. By Theorem \ref{lizhang}, $f(x)$ is $x$-random and therefore 1-random. This shows that 
    $$\{ x: f(x) \text{ is not 1-random} \} \subseteq \{ x: x\text{ is not 2-random} \}. $$
    The latter one is null.
\end{proof}

\begin{corollary}\label{0409}
    Suppose that $x$ is 2-random, then $f(x)$ and $x$ are Turing incomparable.
\end{corollary}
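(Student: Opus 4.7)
The plan is to combine Theorem~\ref{lizhang} with van Lambalgen's theorem. Since $x$ is 2-random, in particular $x$ is 1-random, and $x$ is weakly low for $K$ (because 2-random reals are low for $\Omega$, by Theorem~\ref{Miller}). Applying Theorem~\ref{lizhang} therefore gives that $f(x)$ is $x$-random. This is the single fact I intend to exploit for both directions of Turing-incomparability.

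For the direction $f(x)\not\le_{T}x$: if we had $f(x)\le_{T}x$, then $f(x)$ would be $x$-computable, and no $x$-computable real can be $x$-random, contradicting the conclusion of the previous paragraph.

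For the direction $x\not\le_{T}f(x)$: I would feed the pair $(x,f(x))$ into van Lambalgen's theorem. Since $x$ is 1-random and $f(x)$ is $x$-random, $x\oplus f(x)$ is 1-random. The symmetric direction of van Lambalgen's theorem then yields that $x$ is $f(x)$-random. But if $x\le_{T}f(x)$, then $x$ is $f(x)$-computable and therefore cannot be $f(x)$-random, a contradiction.

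I do not anticipate any serious obstacle here: the only nontrivial input is Theorem~\ref{lizhang}, and once $f(x)$ is known to be $x$-random, both halves of the incomparability are immediate applications of the standard facts that a random real is not computable from its own oracle and that van Lambalgen's theorem is symmetric. One small thing worth noting in passing is that this gives an independent re-derivation of Theorem~\ref{402}(iii) for 2-random $x$, since $f(x)\oplus x$ computes $\emptyset'$ while neither $x$ nor $f(x)$ does alone (the latter following from the fact that $x$ is 2-random and $f(x)$ is $x$-random, so $f(x)$ need not be above $\emptyset'$ by itself).
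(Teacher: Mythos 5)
Your proof is correct and follows essentially the same route as the paper's: Theorem~\ref{Miller} gives weak lowness for $K$, Theorem~\ref{lizhang} gives that $f(x)$ is $x$-random, and van Lambalgen's theorem (using that $x$ is 1-random) gives that $x$ is $f(x)$-random, whence neither real computes the other. The closing aside is slightly off --- Turing incomparability does not re-derive Theorem~\ref{402}(iii), and the claim $f(x)\ngeq_T\emptyset'$ needs the separate base-for-randomness argument the paper gives later --- but it plays no role in your proof of the corollary itself.
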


\begin{proof}
    $x$ is weakly low for $K$ by Theorem \ref{Miller}. Hence $f(x)$ is $x$-random. Since $x$ is 1-random, by van Lambalgen's theorem (see \cite{MR2548883} Theorem 3.4.6), $x$ is $f(x)$-random. Hence $x$ and $f(x)$ are Turing incomparable.
\end{proof}

\begin{corollary}\label{Delta02WithRandomImageImpliesKTrivial}
Let $x \le_T \emptyset'$ and let $f(x)$ be $x$-random. Then $x$ is $K$-trivial.
\end{corollary}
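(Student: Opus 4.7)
The plan is to chain together the lowness characterizations already established in the paper and then invoke the Hirschfeldt--Nies--Stephan theorem, which asserts that every base for $1$-randomness is $K$-trivial. Recall that a real $x$ is a \emph{base for $1$-randomness} if there exists $y \geq_{\mathrm T} x$ such that $y$ is $x$-random.

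First I would unfold the hypothesis using equivalences already proved. Since $f(x)$ is $x$-random, Theorem \ref{lizhang} gives that $x$ is weakly low for $K$, and Theorem \ref{Miller} then converts this into the statement that $\Omega$ is $x$-random, i.e.\ $x$ is low for $\Omega$.

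Next I would exploit the classical fact that $\Omega$, being left-c.e.\ and $1$-random, satisfies $\Omega \equiv_{\mathrm T} \emptyset'$. Together with the hypothesis $x \leq_{\mathrm T} \emptyset'$ this yields $x \leq_{\mathrm T} \Omega$. Taking $y = \Omega$, this exhibits $x$ as a base for $1$-randomness: $\Omega$ is $1$-random, it computes $x$, and it is random relative to $x$.

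Finally, invoking the Hirschfeldt--Nies--Stephan theorem yields that $x$ is $K$-trivial, as desired. The main substance of the argument sits inside that external theorem, whose proof relies on the ``golden run''/decanter machinery; within the scope of this paper the only step is to verify that the hypotheses for being a base for $1$-randomness are satisfied, which is exactly what the chain $f(x)$ is $x$-random $\Rightarrow$ weakly low for $K$ $\Rightarrow$ low for $\Omega$ $\Rightarrow$ $\Omega$ witnesses basehood accomplishes. Hence no additional obstacle is expected beyond citing the HNS result.
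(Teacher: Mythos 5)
Your proof is correct and takes essentially the same route as the paper: both arguments deduce from Theorem \ref{lizhang} that $x$ is weakly low for $K$, pass to lowness for $\Omega$ via Theorem \ref{Miller}, and then use $x \le_T \emptyset'$ to conclude $K$-triviality. The paper simply cites Nies's Exercise 5.1.27 ($\Delta^0_2$ plus low for $\Omega$ implies $K$-trivial) for the final step, and your explicit argument --- exhibiting $\Omega \equiv_T \emptyset' \ge_T x$ as a witness that $x$ is a base for $1$-randomness and invoking Hirschfeldt--Nies--Stephan --- is precisely the standard proof of that exercise.
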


\begin{proof}
$x$ is weakly low for $K$ by Theorem \ref{lizhang}. Then $x$ is low for $\Omega$ by Theorem \ref{Miller}. Since $x$ is both $\Delta_2^0$ and low for $\Omega$, then $x$ is $K$-trivial by \cite{MR2548883} exercise 5.1.27. 
\end{proof}

\begin{corollary}
If $x$ is weakly low for $K$ but not $K$-trivial (for example, if $x$ is 2-random). Then $f(x) \ngeq_T \emptyset'$.
\end{corollary}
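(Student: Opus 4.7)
The plan is to argue by contradiction: suppose $f(x) \geq_T \emptyset'$, with the goal of deriving that $x$ is $K$-trivial, contradicting the hypothesis.

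The first step is a short calculation in the Turing degrees. Theorem \ref{402}(ii) supplies $\emptyset' \oplus f(x) \geq_T x$. Under the supposition $f(x) \geq_T \emptyset'$ we have $\emptyset' \oplus f(x) \equiv_T f(x)$, so this collapses to $x \leq_T f(x)$. The crucial observation is that the hypothesis $f(x) \geq_T \emptyset'$ is exactly what is needed to turn the second half of Theorem \ref{402} into a Turing reduction from $x$ to $f(x)$.

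Next, since $x$ is weakly low for $K$, Theorem \ref{lizhang} guarantees that $f(x)$ is $x$-random, hence in particular $1$-random. Combining with the previous step, $f(x)$ is a $1$-random real that both computes $x$ and is $x$-random. This is exactly the definition of $x$ being a \emph{base for $1$-randomness}, and by the Hirschfeldt-Nies-Stephan characterization (see Nies \cite{MR2548883}) the bases for $1$-randomness are precisely the $K$-trivial reals, forcing $x$ to be $K$-trivial and completing the contradiction.

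I do not anticipate any serious technical obstacle: the entire argument rests on the single observation that the supposition $f(x) \geq_T \emptyset'$ is tailor-made to collapse the right-hand side of Theorem \ref{402}(ii) into $x \leq_T f(x)$, after which the classical characterization of bases for $1$-randomness finishes the proof immediately.
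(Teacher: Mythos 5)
Your proof is correct and follows exactly the same route as the paper: use $f(x)\ge_T\emptyset'$ together with Theorem \ref{402}(ii) to get $f(x)\ge_T x$, then invoke Theorem \ref{lizhang} to make $f(x)$ an $x$-random real computing $x$, so $x$ is a base for $1$-randomness and hence $K$-trivial. You have simply spelled out the degree-theoretic collapse and the Hirschfeldt--Nies--Stephan characterization more explicitly than the paper's one-line argument does.
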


\begin{proof}
    Suppose $f(x)\ge_T\emptyset'$, then $x\le_T\emptyset'$ by Theorem \ref{402}. Also $f(x)$ is $x$-random by Theorem \ref{lizhang}. By Corollary \ref{Delta02WithRandomImageImpliesKTrivial}, $x$ is $K$-trivial, a contradiction. 
\end{proof}

\begin{corollary}\label{407}
    For every $K$-trivial $x$, $f(x)$ is a left-c.e.\ real.
\end{corollary}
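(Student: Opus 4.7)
My plan is to show $f(x)$ is both left-d.c.e.\ and $1$-random, invoke the Rettinger--Zheng dichotomy (Theorem~\ref{cecoce}) to reduce to two cases, and rule out the right-c.e.\ alternative.

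The first two steps combine earlier results of this section. By Fact~\ref{41}, $f(x)$ is left-$x$-c.e., hence in particular left-$x$-d.c.e.; since $x$ is $K$-trivial, Theorem~\ref{dce} then forces $f(x)$ to be left-d.c.e. Moreover, $K$-triviality implies weak lowness for $K$, so Theorem~\ref{lizhang} gives that $f(x)$ is $x$-random, in particular $1$-random. Applying Theorem~\ref{cecoce} now narrows $f(x)$ down to being either left-c.e.\ or right-c.e.

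The main obstacle is eliminating the right-c.e.\ alternative. I would argue by contradiction: suppose $f(x)$ is right-c.e. Then the companion real $\tilde{f}(x) := \Omega - f(x) = \sum_{\sigma >_L x} 2^{-K(\sigma)}$ is left-c.e.\ (as a left-c.e.\ minus a right-c.e.\ real) and, by the proof symmetric to Theorem~\ref{lizhang} using the bijection $e:\omega\to\{\sigma:\sigma>_L x\}$, is also $x$-random, hence $1$-random; Theorem~\ref{KS} then makes $\tilde{f}(x)$ Solovay-complete, so $\emptyset'\le_T \tilde{f}(x)\equiv_T f(x)\oplus\Omega\equiv_T f(x)$, and combined with $f(x)\le_T x\oplus\emptyset'\le_T\emptyset'$ from Theorem~\ref{402}(i) and the $\Delta^0_2$-ness of $K$-trivials we obtain $f(x)\equiv_T\emptyset'$. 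Extracting the final contradiction is the delicate step, since $f(x)\equiv_T\emptyset'$ is a priori compatible with $f(x)$ being $1$-random (compare $\Omega$); my fallback plan is to combine the hypothesized computable upper approximation of $f(x)$ with the natural left-$x$-c.e.\ lower approximation and a cost-function-obedient $\Delta^0_2$ approximation $x_s\to x$ satisfying $\sum_s \mathbf{c}(n_s,s)<\infty$, and to argue that the bounded downward variation of the natural approximation $\alpha_s=\sum_{\sigma\le_L x_s,\,|\sigma|\le s}2^{-K_s(\sigma)}$ can be absorbed into a computable additive constant, producing a purely computable monotone lower approximation to $f(x)$ that directly exhibits it as left-c.e.\ and contradicts the ``only right-c.e.'' hypothesis. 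The hard part will be ensuring that the overhead is a genuine computable constant rather than merely a left-c.e.\ real---if it is only left-c.e., the construction merely recovers left-d.c.e.-ness, which we already know.
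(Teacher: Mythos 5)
Your first two steps match the paper's proof exactly: Fact~\ref{41} plus Theorem~\ref{dce} give that $f(x)$ is left-d.c.e., Theorem~\ref{lizhang} (via weak lowness for $K$) gives that $f(x)$ is $x$-random hence $1$-random, and Theorem~\ref{cecoce} reduces to the dichotomy left-c.e.\ vs.\ right-c.e. Up to here you are on the paper's track.

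The gap is in ruling out the right-c.e.\ alternative. You took a long detour through $\tilde{f}(x)=\Omega-f(x)$, Solovay completeness, and $f(x)\equiv_T\emptyset'$, and then correctly observed that this yields no contradiction (since $\Omega$ itself is $1$-random and Turing-equivalent to $\emptyset'$). The fallback cost-function plan you sketched is speculative and, as you yourself flag, at best recovers left-d.c.e.-ness, which you already have. So the argument does not close.

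The missing observation is far simpler and does not involve $\Omega$, Solovay completeness, or cost functions at all. By Fact~\ref{41}, $f(x)$ is left-$x$-c.e.: there is an $x$-computable nondecreasing sequence converging to it from below. If $f(x)$ were also right-c.e., there would be a computable nonincreasing sequence converging to it from above. Squeezing between these two approximations, $x$ can compute every bit of $f(x)$, i.e.\ $f(x)\le_T x$. But $f(x)$ is $x$-random, and no $x$-random real is $x$-computable. This contradiction rules out the right-c.e.\ case directly, and is the step the paper actually uses.
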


\begin{proof}
  By Fact \ref{41}, $f(x)$ is left-$x$-c.e.\ and therefore difference left-$x$-c.e.. By Theorem \ref{dce}, $x$ is low for difference left-c.e.\ reals, so $f(x)$ is difference left-c.e.. Also, since $x$ is $K$-trivial, $x$ is weakly low for $K$, and by Theorem \ref{lizhang} $f(x)$ is $x$-random. In particular $f(x)$ is 1-random. Now by Theorem \ref{cecoce}, $f(x)$ is either left-c.e.\ or right-c.e.. If $f(x)$ is right-c.e., then since it is also left-$x$-c.e., it is computable in $x$, which is impossible. Hence $f(x)$ is a left-c.e.\ real.
\end{proof}

\begin{corollary}\label{KTrivialImpliesOmegaImage}
    If $x$ is $K$-trivial, then $f(x)$ is an Omega number and therefore $f(x) \equiv_{wtt} \emptyset'$. In particular, if $x \equiv_T y$ are $K$-trivial, then $f(x) \equiv_{wtt} f(y)$.
\end{corollary}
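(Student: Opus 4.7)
The plan is to deduce this as a direct synthesis of Corollary \ref{407} with the Kučera--Slaman theorem (Theorem \ref{KS}). First I would record the two facts about $f(x)$ that the preceding material already supplies when $x$ is $K$-trivial: every $K$-trivial is weakly low for $K$, so Theorem \ref{lizhang} says $f(x)$ is $x$-random and in particular $1$-random; and Corollary \ref{407} says $f(x)$ is a left-c.e. real. So $f(x)$ falls into the class of $1$-random left-c.e. reals.

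Next I would invoke Theorem \ref{KS}: every $1$-random left-c.e. real is Solovay complete, so $\Omega \le_S f(x)$, and conversely $f(x) \le_S \Omega$ because $\Omega$ itself is Solovay complete. Solovay reducibility refines wtt-reducibility on the left-c.e. reals (a standard fact, e.g.\ \cite{downey2010algorithmic}), so $f(x) \equiv_{wtt} \Omega$. Combined with the classical $\Omega \equiv_{wtt} \emptyset'$, this gives $f(x) \equiv_{wtt} \emptyset'$, proving the first assertion.

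The ``in particular'' clause is then immediate and does not actually use $x \equiv_T y$: for any two $K$-trivial reals $x, y$, the previous paragraph gives $f(x) \equiv_{wtt} \emptyset' \equiv_{wtt} f(y)$.

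I do not foresee any real obstacle. The only point one might want to comment on—and the closest thing to a ``step to verify''—is the passage from Solovay completeness to wtt-completeness, which is classical and can simply be cited from \cite{downey2010algorithmic}. Everything else is bookkeeping on results already in place in the paper.
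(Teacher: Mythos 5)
Your proposal is correct and follows essentially the same route as the paper: Corollary \ref{407} for left-c.e.-ness, Theorem \ref{lizhang} (via $K$-trivial $\Rightarrow$ weakly low for $K$) for $1$-randomness, then Kučera--Slaman (Theorem \ref{KS}, whose surrounding statement already records wtt-completeness) to conclude $f(x)\equiv_{wtt}\emptyset'$. Your extra remark spelling out Solovay-to-wtt and noting that the ``in particular'' clause never uses $x\equiv_T y$ is accurate but does not change the argument.
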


\begin{proof}
    Given a $K$-trivial real $x$. By Corollary \ref{407}, $f(x)$ is a left-c.e.\ real. By Theorem \ref{lizhang}, $f(x)$ is $x$-random and hence 1-random. Therefore $x$ is an Omega number, and by Theorem \ref{KS}, $f(x)\equiv_{wtt} \emptyset'$.
\end{proof}
The $K$-trivial reals form an ideal in the $\Delta_2^0$ wtt-degrees (see \cite{MR2548883} Theorem 5.2.17). Hence $f$ is weak truth-table invariant in the ideal induced by the $K$-trivial reals. On the other hand, the following corollary shows that $f$ is not a Turing invariant function in general.

\begin{corollary}[with Slaman]
         If $x$ is 2-random then $f(x)\not\equiv_T f(\bar{x})$, where $\bar{x}$ is defined by $\bar{x}(i)=1-x(i)$ for all $i \in \omega$. In particular, there are $x \equiv_T y$ such that $f(x) \not\equiv_T f(y).$
    
\end{corollary}

    \begin{proof}
        Let $x$ be 2-random. If $f(x)\equiv_T f(\bar{x})$, then by Fact \ref{41}, $\bar{x}$ is right-$f(\bar{x})$-c.e.\ and therefore $x$ is left-$f(x)$-c.e.. Also by Fact \ref{41}, $x$ is right-$f(x)$-c.e., so $x\le_Tf(x)$. This contradicts Corollary \ref{0409} and that $x$ is 2-random. Therefore $f(x)\not\equiv_T f(\bar{x})$. 
    \end{proof}

    \begin{corollary}\label{0416}
        If $x$ is left-c.e., then $f(x)$ is an Omega number and therefore $f(x)\equiv_{wtt}\emptyset'$. In particular, if $x\equiv_T y$ are left-c.e., then $f(x)\equiv_Tf(y)$. 
    \end{corollary}
    
\begin{proof}
    By Corollary \ref{KTrivialImpliesOmegaImage}, $f(0^\omega)$ is an Omega number. Since $x$ is left-c.e., $f(x)-f(0^\omega)$ is also left-c.e.. Then $f(x)$ is the sum of an Omega number and a left-c.e.\ real, so by \cite{MR2548883} Proposition 3.2.27), $f(x)$ is also an Omega number. 
\end{proof}

\subsection{Negative results}

Recall that from Fact \ref{02}, every real in the range of $f$ is not $\emptyset'$-Kurtz random. Moreover, we can now construct a real $x$ such that $f(x)$ is not 1-random. On the other hand, the set of such reals is null (Corollary \ref{411}).

\begin{theorem}[with Yu]\label{0418}
  There is an $x$ such that $f (x)$ is not 1-random.
\end{theorem}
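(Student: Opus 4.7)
The plan is to construct $x \in 2^\omega$ such that the binary expansion of $f(x)$ fails Borel normality to base $2$; since every $1$-random real is Borel normal to every base, this yields the theorem. Specifically, I will arrange that the lower density of $0$-bits in $f(x)$'s binary expansion strictly exceeds $1/2$.

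The main tool is what the authors term the \emph{Small Perturbation Lemma}: for every $\sigma \in 2^{<\omega}$ and every $\epsilon > 0$ there is some $m$ with $f(\sigma 0^m 1^\infty) - f(\sigma 0^\infty) < \epsilon$, and symmetrically $f(\sigma 1^\infty) - f(\sigma 1^m 0^\infty) < \epsilon$. Both are immediate from the continuity of $f$, recorded at the start of Section~3. Consequently, appending a long enough block of $0$s (respectively $1$s) to $\sigma$ shrinks the basic interval $[f(\tau 0^\infty), f(\tau 1^\infty)]$ to any prescribed small length and positions it arbitrarily close to the left (respectively right) endpoint of the previous basic interval.

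I construct $x = \bigcup_n \sigma_n$ in stages. Write $a_n = f(\sigma_n 0^\infty)$, $b_n = f(\sigma_n 1^\infty)$, and let $N_n$ be the length of the longest common binary prefix of $a_n$ and $b_n$, so bit $N_n+1$ of $a_n$ is $0$ and of $b_n$ is $1$. At each sub-step I apply the Small Perturbation Lemma, pushing the current basic interval against whichever of its two endpoints has a $0$ at the current controlling bit; since the endpoints disagree at that bit, at least one side is $0$, so the push pins that bit of $f(x)$ to $0$. Iterating this sub-step within each stage yields a sequence of fresh forced $0$-bits of $f(x)$. The length of each appended block is chosen as the minimum required to land the new basic interval strictly inside the intended dyadic half; this, combined with the perfectness of $f(2^\omega)$ from Proposition~\ref{null}, which guarantees the existence of basic sub-intervals of arbitrarily prescribed small length, keeps the ``overshoot'' (the number of bits additionally pinned by the shrinkage, whose values are merely inherited from the endpoint and therefore uncontrolled) bounded. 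Iterating enough sub-steps per stage with appropriate parameters drives the asymptotic density of forced $0$-bits strictly above $1/2$.

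The main technical obstacle is precisely this overshoot control: one must show that the minimum-length choice per sub-step keeps the average overshoot strictly below $1$, so that forced $0$s asymptotically outnumber uncontrolled inherited bits. This needs a quantitative analysis of how the decrement sequence $m \mapsto f(\sigma 0^m 1^\infty) - f(\sigma 0^\infty)$ shrinks in $m$, together with strategic use of the freedom to switch between $0$-pushes and $1$-pushes --- always selecting whichever endpoint both carries a $0$ at the controlling bit and produces a smaller overshoot --- so that the density estimate can be completed and $f(x)$ is provably non-normal, hence not $1$-random.
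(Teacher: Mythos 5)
Your plan stalls at exactly the point you flag as ``overshoot control'', and this is a genuine gap rather than a routine detail; moreover, the tool you call the Small Perturbation Lemma is not the lemma the paper proves --- your version is just continuity of $f$, which gives only an upper bound on how far appending $0$s or $1$s moves the value. With continuity alone, the bits you ``force'' are not really chosen by you: when you push the basic interval against its left endpoint $a_n=f(\sigma_n0^\infty)$, the newly determined bits of $f(x)$ are precisely an initial segment of the binary expansion of $a_n$, namely the controlling $0$ together with all the inherited bits after it. In the extreme case where you always push left you simply get $f(x)=f(\sigma 0^\infty)$, whose $0$-density is whatever it happens to be (quite possibly exactly $1/2$, e.g.\ if that value is $1$-random), so on average each sub-step contributes exactly the $0$-density that the uncontrolled endpoint already has. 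The minimum-length choice does not bound the overshoot: if the expansion of $a_n$ continues with a long run of $1$s after the controlling bit, the distance from $a_n$ to the relevant dyadic threshold is tiny, the required shrinkage pins that entire run of $1$s, and the run length is not under your control. Perfectness of $f(2^\omega)$ gives arbitrarily small subintervals but says nothing about where their endpoints sit relative to dyadic rationals, so it cannot rescue the density bookkeeping; and switching to $1$-pushes cannot help, since the right endpoint carries a $1$ at the controlling bit.

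What the paper's Small Perturbation Lemma actually supplies --- and what your argument is missing --- is a two-sided bound: a fixed machine constant $c$ (coming from $K(\sigma010^n),K(\sigma110^n),K(\sigma01^n)\le K(\sigma10^n)+c-2$) such that whenever some single-coordinate flip moves $f$ by more than $2^{-n}$, there exists $y$ with $2^{-n-c}\le|f(y)-f(x)|\le 2^{-n}$; the lower bound requires a genuine stage-by-stage construction, not continuity. With that lower bound no density accounting is needed: the paper partitions the expansion of $f(x)$ into computably placed blocks of length $n$ followed by windows of fixed width $c+1$, and at each stage either the length-$n$ block is monochromatic (if this occurs infinitely often, $f(x)$ fails a Solovay test), or a perturbation of size between $2^{-g(n+1)}$ and $2^{-g(n)-n-1}$ destroys the pattern $1^{c+1}$ in the constant-width window without disturbing earlier bits; if the latter happens cofinitely, $f(x)$ is not even Kurtz random. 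To repair your proposal you would need either to prove a quantitative lower-bound lemma of this kind or to obtain some other handle on the binary expansions of the endpoint values $f(\sigma0^\infty)$; bare continuity plus perfectness does not suffice to push the $0$-density above $1/2$.
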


A real $x$ is \textit{normal} if for each $k$, all strings $\sigma$ of length $k$ appear in the array $\{x\upharpoonright[nk,(n+1)k)\}_{n\in\omega}$ with equal frequency. The proof uses an argument in the spirit of Becher et al. \cite{MR2275867} to make $f(x)$ not normal and hence not 1-random. We construct this real $x$ by $\omega$ many stages and at each stage we use the perturbation lemma below, which says that the size of perturbation can be controlled from below and above. Fix a constant $c>1$ such that for all $\sigma$ and $n$,
\[ K (\sigma 010^n), K (\sigma 101^n) \leq K (\sigma 10^n) + c - 2. \]

\begin{lemma}[Small Perturbation]
    \label{Lemma1}For all reals $x$ and $n\in\omega$, if there is $z$ such that $|f(x)-f(z)|>2^{-n}$, then there is $y$ such that $2^{-n-c}<|f(x)-f(y)|<2^{-n}$. Moreover, we can choose $y$ so that $f(y)$ is 1-random. 
\end{lemma}

\begin{proof}
    Consider \begin{align*}
        A^- &= \{y:f(y)\leq f(x)-2^{-n}\} \\
        B^- &= \{y:f(x)-2^{-n}<f(y)<f(x)-2^{-n-c}\} \\
        A &= \{y:f(x)-2^{-n-c}\leq f(y)\leq f(x)+2^{-n-c}\} \\
        B^+ &= \{y:f(x)+2^{-n-c}<f(y)<f(x)+2^{-n}\} \\
        A^+ &= \{y:f(x)+2^{-n}\leq f(y)\}. 
    \end{align*} Then $2^\omega$ is the disjoint union of these 5 sets, and elements in these sets are increasing, i.e.\ every $y$ in $A^-$ is strictly smaller than any $y$ in $B^-$, etc.. Since $f$ is continuous, $A^-$, $A$ and $A^+$ are closed and $B^-$ and $B^+$ are open. Any real $y$ in $B^-\cup B^+$ witnesses the Lemma, so it suffices to prove that $B^-\cup B^+$ is non-empty. Then since $B^-\cup B^+$ is open, we can take $y$ to be computable, so by Theorem \ref{lizhang} $f(y)$ is 1-random. 

    For a contradiction, suppose that $B^-$ and $B^+$ are both empty. Then $A^-$, $A$ and $A^+$ are clopen. Since $x\in A$, $A$ is non-empty, and since $z$ is in either $A^-$ or $A^+$, at least one of $A^-$ and $A^+$ is non-empty. We distinguish 3 cases. \begin{enumerate}[(1)]
        \item $A^-$ is empty, $A=[0^\infty,\sigma 01^\infty]$ and $A^+=[\sigma 10^\infty,1^\infty]$ for some $\sigma$. 
        \item $A^-=[0^\infty,\tau 01^\infty]$, $A=[\tau 10^\infty,1^\infty]$ for some $\tau$ and $A^+$ is empty. 
        \item $A^-=[0^\infty,\tau 01^\infty]$, $A=[\tau 10^\infty,\sigma 01^\infty]$ and $A^+=[\sigma 10^\infty,1^\infty]$ for some $\sigma$, $\tau$. We distinguish 3 subcases by (3a) $|\sigma|>|\tau|$, (3b) $|\sigma|=|\tau|$ or (3c) $|\sigma|<|\tau|$. 
    \end{enumerate}

    In case (1), (3a) or (3b), $\inf A<_L\sigma 010^m<_L\sup A$ for all $m$. Then \begin{align*}
        f(\sigma 10^\infty)-f(\sigma 01^\infty) &= \sum_{m\geq 0}2^{-K(\sigma 10^m)} \le 2^{c-2}\sum_{m\geq 0}2^{-K(\sigma 010^m)} \\
        &\le 2^{c-2}(f(\sup A)-f(\inf A))\le 2^{c-2}\cdot 2^{-n-c+1}=2^{-n-1}.
    \end{align*} But also $f(\sigma 10^\infty)-f(\sigma 01^\infty)\geq 2^{-n}-2^{-n-c}$, a contradiction. 

    In case (2) or (3c) (in fact, also (3b)), $\inf A<_L\tau 101^m<_L\sup A$ for all $m$. Then similarly \[f(\tau 10^\infty)-f(\tau 01^\infty)=\sum_{m\geq 0}2^{-K(\tau 10^m)} \le 2^{c-2}\sum_{m\geq 0}2^{-K(\tau 101^m)}\leq 2^{-n-1}\] and $f(\tau 10^\infty)-f(\tau 01^\infty)\geq 2^{-n}-2^{-n-c}$, a contradiction. 

    This shows that $B^-\cup B^+$ is non-empty, and the Lemma is proved. 
\end{proof}

\begin{proof}(of Theorem \ref{0418}).
    We prove that there is $x$ such that $f(x)$ is not normal. 

    We divide $I$ into disjoint union of intervals $I_s=[a_s,a_{s+1})$ where $a_s=s(c+1)$, so each interval is of length $c+1$. We fix a large $m$ such that $2^{-a_m}<f(1^\infty)-f(0^\infty)$. We define a sequence $(x_n)_{n\in\omega}$ and let $x=\lim_n x_n$. Let $x_0$ until $x_m$ be arbitrary. To define $x_{s+1}$ from $x_s$ for $s\geq m$, we distinguish two cases: 
    
    If $f(x_s)\upharpoonright I_s=10^c$, then we apply Lemma \ref{Lemma1} to $x_s$ and $n=a_s+1$ by verifying either $0^\infty$ or $1^\infty$ will qualify for the required $z$. We get $y$ such that $2^{-a_{s+1}}<|f(y)-f(x_s)|<2^{-a_s-1}$, therefore $f(y)\upharpoonright I_t=f(x_s)\upharpoonright I_t$ for all $t<s$, and $f(y)\upharpoonright I_s\neq f(x_s)\upharpoonright I_s=10^c$. Let $x_{s+1}=y$. 

    If $f(x_s)\upharpoonright I_s\neq 10^c$, then we do nothing and let $x_{s+1}=x_s$. 

    Given any $s$, $f(x_t)\upharpoonright I_s$ stays the same for all $t>\max\{s,m\}$. Therefore $f(x_s)$ converges, and thus $x_s$ converges to some $x$. Given any $s\geq m$, $f(x_t)\upharpoonright I_s\neq 10^c$ for all $t>s$, so $f(x)\upharpoonright I_s\neq 10^c$, and therefore $f(x)$ is not normal. 
\end{proof}

\begin{remark}\label{remark0418}
   The proof of Theorem \ref{0418} can be modified to be effective in $\emptyset'$. That is we start with a computable real $x_m$. To define $x_{s+1}$ from $x_s$ for $s\geq m$, $\emptyset'$ can decide the case distinction, and when applying Lemma \ref{Lemma1}, $\emptyset'$ can search for a $\sigma$ with $[\sigma]\subseteq B^-\cup B^+$, and we can take $x_{s+1}=\sigma 0^\infty$. Since we fixed the first $a_s$ bits of $f(x_t)$ for all $t\geq\max\{s,m\}$, $f(x)\leq_T\emptyset'$. Then by Theorem \ref{402} (ii), $x\leq_T\emptyset'$. Therefore we get a real $x \le_T \emptyset'$ such that $f(x)$ is not 1-random.
\end{remark}

The perturbation not only attacks the randomness. It can also be used to split the reals. Hence we can construct a perfect tree of reals in $f(2^\omega)$ such that none of the reals is 1-random.
\begin{corollary}
  There are $2^{\aleph_0}$ many reals $x$ such that $f (x)$ is not 1-random.
\end{corollary}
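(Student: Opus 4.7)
The plan is to modify the construction in the proof of Theorem~\ref{0418} so as to build an entire binary tree $\{x_\sigma\}_{\sigma\in 2^{<\omega}}$ of reals, rather than a single sequence, and then take $x_\alpha = \lim_k x_{\alpha\upharpoonright k}$ for each $\alpha \in 2^\omega$. The main enabler is the flexibility already present in the Small Perturbation Lemma~\ref{Lemma1}: applied with parameter $n$ it yields some $y_0$ with $|f(y_0)-f(x)|\in[2^{-n-c},2^{-n}]$, while a second application with a larger parameter $n'\ge n+c+1$ yields $y_1$ with $|f(y_1)-f(x)|\in[2^{-n'-c},2^{-n'}]$. The two $f$-value ranges are disjoint, so $y_0\ne y_1$, and in fact $|f(y_0)-f(y_1)|\ge 2^{-n-c-1}$.

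I would carry out the construction inductively, imitating the three-case analysis of Theorem~\ref{0418} at every node. Having built $x_\sigma$ for $|\sigma|=n>M+1$, I would determine which case applies at $x_\sigma$ and produce $x_{\sigma^\frown 0}$ and $x_{\sigma^\frown 1}$ by two applications of Lemma~\ref{Lemma1}. In Case 3, both parameters are chosen so that the perturbation sizes fall in $[2^{-g(n+1)},2^{-g(n)-n-1}]$, ensuring each $x_{\sigma^\frown i}$ flips the $1^{c+1}$ pattern on the block $[g(n)+n,g(n+1))$ of $f$; in Cases 1 and 2, both parameters are chosen very large, so the two perturbations are far too small to alter the bits of $f$ in that block. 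Convergence of $\{x_{\alpha\upharpoonright k}\}$ for each $\alpha$ follows as in the original proof, since perturbations act at positions receding to infinity. Distinctness of the $x_\alpha$ follows because the $f$-gap of at least $2^{-k-c-1}$ introduced at the first level $k$ where $\alpha,\beta$ disagree geometrically dominates the sum of all later perturbation sizes, so it persists in the limit. Non-randomness of each $f(x_\alpha)$ is then inherited by running the argument of Theorem~\ref{0418} along the branch $\alpha$: either Case 1 occurs infinitely often and the Solovay test $\{V_n\}$ built from $0^n$ and $1^n$ blocks defeats $f(x_\alpha)$, or Case 1 occurs only finitely often, in which case from some stage on the block $[g(m)+m,g(m+1))$ of $f(x_\alpha)$ is never $1^{c+1}$, ruling out Kurtz randomness and hence 1-randomness.

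The main technical obstacle is verifying that both perturbations chosen at a node are simultaneously ``valid'' for the non-randomness attack: in Case 3, both must propagate the appropriate carry to destroy the $1^{c+1}$ pattern on the target block, and in Cases 1 and 2 the tiny perturbations must not introduce carries that later reintroduce a $1^{c+1}$ on a protected block. This is handled by careful selection of the two parameters $n,n'$ in Lemma~\ref{Lemma1} together with a carry-propagation analysis refining the bookkeeping already implicit in the proof of Theorem~\ref{0418}.
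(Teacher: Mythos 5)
Your approach is genuinely different from the paper's, which uses a clean two-phase construction: first build the tree $\{x_\sigma\}$ by splitting at \emph{even}-indexed $f$-blocks while keeping each $f(x_\sigma)$ $1$-random (so that the moreover clause of Lemma~\ref{Lemma1} and $1$-randomness together guarantee a usable block is always available), and only afterwards modify each branch at the \emph{odd}-indexed blocks via the Theorem~\ref{0418} machinery to kill $1$-randomness. Because the distinguishing data lives on even blocks and the non-randomness attack lives on odd blocks, the two phases do not interfere, and distinctness of the final reals is witnessed by a fixed even block rather than by a numerical gap that has to survive all later perturbations.

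Your merged single-phase plan has an internal inconsistency in Case~3. You need two distinct children, which you obtain by invoking Lemma~\ref{Lemma1} with parameters $n$ and $n'\ge n+c+1$ so that the resulting ranges $[2^{-n-c},2^{-n}]$ and $[2^{-n'-c},2^{-n'}]$ are disjoint and hence $y_0\ne y_1$ with $|f(y_0)-f(y_1)|\ge 2^{-n-c-1}$. But you \emph{also} require both perturbation sizes to lie in the Case~3 window $[2^{-g(n+1)},2^{-g(n)-n-1}]$. Since $g(n+1)=g(n)+n+c+1$, that window has multiplicative width exactly $2^{c}$, which is precisely the width of Lemma~\ref{Lemma1}'s output range $[2^{-N-c},2^{-N}]$; so the window is occupied entirely by the $N=g(n)+n+1$ application, and the $N'=N+c+1$ application yields a perturbation $\le 2^{-N-c-1}<2^{-g(n+1)}$, strictly \emph{below} the window. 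For a perturbation that small there is no guarantee the carry reaches into $[g(n)+n,g(n+1))$ at all, so the $1^{c+1}$ pattern need not be flipped and the Kurtz-test argument breaks on one of the two children. If instead you shrink $n'-n$ to keep both in the window, you lose the disjointness certificate for $y_0\ne y_1$; and if you appeal to the moreover clause to pick a nearby $2$-random $y_1$ (which can indeed give a distinct output in the same window), then $|f(y_0)-f(y_1)|$ can be arbitrarily small and your ``geometric domination of later perturbations'' claim for distinctness of the limits no longer holds. So as written the construction doesn't close, and the obstruction is structural rather than merely a bookkeeping/carry issue; the paper's even/odd separation is exactly what removes it.
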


\begin{proof}
  Fix $x_\lambda$ ($\lambda$ is the empty string) such that $x_\lambda(0)=0$ and $f(x_\lambda)$ is 1-random. Choose $M$ as in the proof of Theorem \ref{0418}. Now we construct $2^{\aleph_0}$ many reals in $f(2^\omega)$:

  \

  At stage 1. Find $n>M$ and $y$ such that $f(x_\lambda)-f(y)>2^{-2(c+1)n}$ and $f(x_\lambda)\upharpoonright [n,n+2c+2)=10^c10^c$. As in the proof of Theorem \ref{0418}, find different $x_0,x_1\neq x_\lambda$ such that $f(x_0), f(x_1)$ is 1-random (see the moreover part of Lemma \ref{Lemma1}) and
  
  $$f(x_1)\upharpoonright [0,n)= f(x_0) \upharpoonright [0,n)=f(x_\lambda)\upharpoonright [0,n),$$
$$f(x_0) \upharpoonright [n,n+c+1)\neq f(x_\lambda)\upharpoonright [n,n+c+1),$$
$$f(x_1) \upharpoonright [n+c+1,n+2c+2)\neq f(x_\lambda)\upharpoonright [n+c+1,n+2c+2),$$
$$f(x_1) \upharpoonright [n,n+c+1)= f(x_\lambda)\upharpoonright [n,n+c+1).$$
Hence $f(x_0)\neq f(x_1)$. We say that $x_0$ and $x_1$ \textit{splits} $x_\lambda$ at the $1$-th interval.

At stage $s \ge 2$. For each $\sigma \in 2^s$, we can split each $x_\sigma$ for the next interval $I$ that $f(x_\sigma)\upharpoonright=10^c10^c$, since each $f(x|\sigma)$ is random. 

End construction.

\

For each real $z$, define $x_z=\lim_nx_{z\upharpoonright n}$. Then $x_{z_0} \neq x_{z_1}$ for all $z_0 \neq z_1$. Hence we get $2^{\aleph_0}$ many reals in $f(2^\omega)$. For all real $z$, $x_z$ is not normal, hence $x_z$ is non-random. 
\end{proof}

\subsection{Left-c.e.\ tests}
As pointed out in the previous sections, $f(2^\omega)$ can be constructed by steps. If we let $$f_n=\bigcup_{\sigma \in 2^{n}}[f(\sigma0^\infty),f(\sigma1^\infty)],$$
then $\lim_{n \to \infty}\mu (f_n)=0$ and each $f_n$ is a finite union of closed intervals with left-c.e. endpoints. Inspired by the non-$\emptyset'$-Kurtz-randomness of the reals in $f(2^\omega)$ and the above fact, we define two more randomness notions which are finally shown to be equivalent to 2-randomness and $\emptyset'$-Kurtz randomness respectively. 
\begin{definition}
    A left-c.e.\ test is a sequence $\{V_n=\bigcup_i[a_i^n,b_i^n]\}_{n\in\omega}$ such that $a_i^n$ and $b_i^n$ are uniformly left-c.e.\ in $i$ and $n$, $a_i^n\leq b_i^n$ and $\mu(V_n)\le2^{-n}$. A real is left-c.e.\ random if it passes all the left-c.e.\ tests.
\end{definition}

We make some remarks on the flexibility of the definition. 

Firstly, it is not necessary to require that $a_i^n\leq b_i^n$, since given any uniformly left-c.e.\ sequences $\{a_i^n\}_{i,n\in\omega}$ and $\{b_i^n\}_{i,n\in\omega}$, we can instead use the left-c.e.\ test $\{V_n=\bigcup_i[\min\{a_i^n,b_i^n\},b_i^n]\}_{n\in\omega}$, where the endpoints are also uniformly left-c.e.. 

Secondly, one can replace the closed intervals $[a_i^n,b_i^n]$ with open intervals or half-open intervals. Given a left-c.e.\ test $\{V_n=\bigcup_i[a_i^n,b_i^n]\}_{n\in\omega}$, by letting $\{U_n=\bigcup_i(a_i^n-2^{-n-i},b_i^n+2^{-n-i})\}_{n\in\omega}$, we have $\mu(U_n)\leq\mu(V_n)+2^{1-n}\leq 2^{2-n}$ so $\{U_{n+2}\}_{n\in\omega}$ is a left-c.e.\ test using open intervals and $\bigcap_n U_{n+2}\supseteq\bigcap_n V_n$. The other direction is trivial, and the same argument works for half-open intervals. 

Finally, one can require that the interior of intervals $[a_i^n,b_i^n]$ and $[a_j^n,b_j^n]$ do not overlap for any $i\neq j$. Given a left-c.e.\ test $\{V_n=\bigcup_i[a_i^n,b_i^n]\}_{n\in\omega}$ with possible overlaps, we can run the approximation of $a_i^n$ and $b_i^n$, except whenever we see an increase in $b_i^n$ that will cause $[a_i^n,b_i^n]$ and $[a_j^n,b_j^n]$ to overlap, we instead replace $[a_i^n,b_i^n]$ with $[a_i^n,a_j^n]$ and enumerate a new interval $[b_j^n,b_i^n]$. 

\begin{proposition}
    A real $x$ is left-c.e.\ random if and only if $x$ is 2-random. 
\end{proposition}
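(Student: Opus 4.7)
The plan is to handle the two directions separately; the forward direction (left-c.e. random $\Rightarrow$ 2-random) is the technical core.

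For the easy direction (2-random $\Rightarrow$ left-c.e. random), given a left-c.e. test $\{V_n\}$ with $V_n=\bigcup_i[a_i^n,b_i^n]$, I would pass to the open interior $V'_n=\bigcup_i(a_i^n,b_i^n)$. Since the endpoints are uniformly left-c.e. and hence uniformly $\Delta^0_2$, for each rational pair $p<q$ it is $\emptyset'$-decidable whether $[p,q]\subseteq(a_i^n,b_i^n)$, so $V'_n$ is a uniformly $\Sigma^0_1(\emptyset')$ open class with $\mu(V'_n)\le 2^{-n}$; that is, $\{V'_n\}$ is a Martin-L\"of test relative to $\emptyset'$. A 2-random $x$ passes it, and the only way $x$ can lie in $V_n\setminus V'_n$ is to equal some $a_i^n$ or $b_i^n$; but then $x$ would be left-c.e., hence $\Delta^0_2$, contradicting its 2-randomness.

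For the hard direction, I would argue the contrapositive: given an $\emptyset'$-Martin-L\"of test $\{W_n\}$ with $x\in\bigcap_n W_n$, construct a left-c.e. test $\{V_n\}$ with $x\in\bigcap_n V_n$. Set
\[T_n=\{(p,q)\in\mathbb{Q}^2:p<q\text{ and }[p,q]\subseteq W_n\}.\]
By compactness of $[p,q]$, the relation $(p,q)\in T_n$ is $\Sigma^0_1(\emptyset')$ uniformly in $p,q,n$ (search for a finite subset of the $\emptyset'$-c.e. basic open enumeration of $W_n$ whose union covers $[p,q]$), so $T_n$ is uniformly $\Sigma^0_2$. By Shoenfield's limit lemma there is a computable predicate $P$ such that $(p,q)\in T_n$ iff $\exists t\,\forall s\ge t\,P(p,q,n,s)=1$. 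For each triple $(p,q,t)$, coded as an index $i$, introduce a slot with $b_i^n[s]=q$ held constant and
\[a_i^n[s]=\begin{cases}p, & \text{if }P(p,q,n,s')=1\text{ for all }s'\in[t,s],\\ q+1, & \text{otherwise.}\end{cases}\]
These are uniformly non-decreasing computable rational approximations, so $a_i^n,b_i^n$ are uniformly left-c.e., and the slot $[a_i^n,b_i^n]$ equals $[p,q]$ exactly when the candidate $(p,q,t)$ survives forever; otherwise it is empty. Hence $V_n=\bigcup_{(p,q)\in T_n}[p,q]$, which equals $W_n$ as a set because $W_n$ is open: every $y\in W_n$ lies in a rational closed sub-interval of $W_n$ and is therefore in $V_n$, while each non-empty slot is by construction contained in $W_n$. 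Thus $\mu(V_n)=\mu(W_n)\le 2^{-n}$ and $x\in V_n$ for every $n$.

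The main obstacle is the reverse conversion, i.e.\ presenting an arbitrary $\emptyset'$-effective open set as a c.e.\ union of closed intervals with uniformly left-c.e. endpoints. The key trick, permitted by the asymmetry of the left-c.e. condition, is that we can "kill" any slot by pushing its left endpoint above its right endpoint (a monotone, hence left-c.e., movement), and we enumerate a fresh triple $(p,q,t')$ for every possible Shoenfield witness $t'$, so the interval $[p,q]$ will survive in $V_n$ as long as some correct witness $t$ exists.
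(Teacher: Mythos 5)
Your proof is correct, and the key mechanism coincides with the paper's: exploit the one-sidedness of left-c.e.\ approximations to ``kill'' a candidate interval by pushing its left endpoint past the right one, while a correct guess settles to the intended interval. The scaffolding differs in both directions, though. For the hard direction the paper applies Shoenfield's limit lemma directly to the cylinder strings $\sigma_i^n$ defining the $\emptyset'$-ML test (each is $\Delta^0_2$), giving one slot per guess $(i,s)$; you instead pass through the auxiliary $\Sigma^0_2$ set $T_n$ of rational closed sub-intervals of $W_n$ and invoke the $\Sigma^0_2$ eventually-always normal form, giving one slot per $(p,q,t)$. This is slightly more indirect but has the merit of not presupposing a cylinder presentation of $W_n$. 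For the easy direction the paper covers $V_n$ by a slightly larger $\Sigma^0_1(\emptyset')$ open set of measure $<2^{-n+1}$, while you pass to the open interior $V_n'$ and observe that the discarded endpoints, being left-c.e.\ and hence $\Delta^0_2$, cannot be 2-random --- a clean alternative. Two small quibbles: the relation ``$[p,q]\subseteq(a_i^n,b_i^n)$'' is not $\emptyset'$-\emph{decidable} (the clause $a_i^n<p$ is properly $\Sigma^0_2$ for a left-c.e.\ $a_i^n$), but it is $\Sigma^0_1(\emptyset')$, which is all your enumeration of $V_n'$ requires; and what you cite as Shoenfield's limit lemma is really the cognate fact that a set is $\Sigma^0_2$ if and only if it admits an eventually-always computable approximation.
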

\begin{proof}
Let $\{V_n=\bigcup_{i}[a_{i}^n,b_{i}^n]\}$ be a left-c.e.\ test. Then there is a sequence $\{U_n\}$ of uniformly $\Sigma^0_1(\emptyset')$ sets such that $V_n \subseteq U_n$ and $\mu (U_n)<2^{-n+1}$. Hence every 2-random real is left-c.e. random.

On the other hand, suppose $x$ is not 2-random. Let $V_n=\bigcup_i[\sigma_i^n]$ be a $\emptyset'$-Martin-Löf test and $x\in \bigcap_nV_n$. Since $(i,n)\mapsto\sigma_{i}^n$ is computable in $\emptyset'$, by Shoenfield limit lemma there is a computable function $f:\omega^3\to 2^{<\omega}$ such that $\lim_s f(i,n,s)$ exists and equal to $\sigma_i^n$ for all $i$ and $n$. Let 
	\begin{align*}
	a_{(i,s)}^n &= \begin{cases}
		f(i,n,s)1^\omega & \text{if there is $t>s$ with $f(i,n,t)\neq f(i,n,s)$} \\
		f(i,n,s)0^\omega & \text{otherwise}
	\end{cases} ,
\end{align*}
and $b_{(i,s)}^n = f(i,n,s)1^\omega$. Then it is straightforward to verify that $a_{(i,s)}^n$ and $b_{(i,s)}^n$ are uniformly left-c.e.\ reals. Also, $\bigcup_{i,s}[a_{(i,s)}^n,b_{(i,s)}^n]=\bigcup_i[\sigma_i^n]=V_n$. Then $x$ is not left-c.e.\ random. 
\end{proof}

\begin{definition}
    A finitely left-c.e.\ test is a sequence $\{V_n=\bigcup_{i}[a_i^n,b_i^n]\}_{n\in\omega}$ such that $a_i^n$ and $b_i^n$ are uniformly left-c.e.\ in $i$ and $n$, $a_i^n\leq b_i^n$, for each $n$ there are only finitely many $i$ such that $b_i^n \neq 0$ and $\mu(V_n)\le2^{-n}$. A real is finitely left-c.e. random if it passes all the finitely left-c.e. tests.
\end{definition}

\begin{proposition}
    A real $x$ is $\emptyset'$-Kurtz random if and only if $x$ is finitely left-c.e. random. 
\end{proposition}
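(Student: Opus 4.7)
The plan is to prove both directions by translating between the two kinds of tests while preserving the capture of $x$.

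For the forward direction, I would translate any finitely left-c.e.\ test $\{V_n\}$ into a $\emptyset'$-Kurtz test capturing the same reals. Each $V_n=\bigcup_i [a_i^n, b_i^n]$ has only finitely many non-trivial intervals, and since each left-c.e.\ real is $\emptyset'$-computable, using $\emptyset'$ I can round each endpoint outward to a dyadic rational, with $\tilde a_i^n \le a_i^n$ and $\tilde b_i^n \ge b_i^n$, at accuracy $2^{-(n+i+2)}$. The finite union $U_n:=\bigcup_i[\tilde a_i^n,\tilde b_i^n]$ is an $\emptyset'$-clopen set containing $V_n$ with $\mu(U_n)\le \mu(V_n)+2^{-n}\le 2^{-n+1}$; after reindexing, $\{U_n\}$ is a $\emptyset'$-Kurtz test containing $\bigcap V_n$, so a $\emptyset'$-Kurtz random $x$ avoids $\bigcap V_n$.

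For the converse, assume $x\in\bigcap U_n$ with $\{U_n\}$ a $\emptyset'$-Kurtz test, $U_n=[D_n]$, $D_n$ an $\emptyset'$-finite set of strings, $\mu(U_n)\le 2^{-n}$. I would adapt the Shoenfield-pairing construction from the previous proposition with one key modification. Since $D_n$ is coded by an $\emptyset'$-computable integer, the Shoenfield limit lemma provides a computable approximation $D_n^s$ that is eventually constant equal to $D_n$. Let $\sigma_j^n(s)$ denote the $j$-th element of $D_n^s$ in lexicographic order, or $\bot$ if $j\ge|D_n^s|$. I index intervals by pairs $(j,s)$. The naive approach of activating every slot yields countably many non-trivial intervals per $n$, violating the finiteness constraint. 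The key trick is to activate slot $(j,s)$ \emph{only} at a change stage, i.e., when $\sigma_j^n(s)\ne\sigma_j^n(s-1)$ and $\sigma_j^n(s)\ne\bot$; since $D_n^s$ stabilizes, only finitely many slots are activated per $n$.

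For an activated slot I set $b_{(j,s)}^n := 0.\sigma_j^n(s)1^\omega$ (a fixed dyadic rational) and let $a_{(j,s)}^n$ enumerate up to $0.\sigma_j^n(s)0^\omega$ while $\sigma_j^n(\cdot)$ remains equal to $\sigma_j^n(s)$ from stage $s$ onward, jumping further up to $0.\sigma_j^n(s)1^\omega$ the first time a later change is observed; non-activated slots stay at $0$. Both endpoints are manifestly left-c.e.\ (all jumps are upward), uniformly in $(j,s,n)$. In the limit each activated slot yields either the clopen interval $[\sigma_j^n(s)]$ (when stability persists from stage $s$, which holds at the last change stage for each $j<|D_n|$) or collapses to the single point $\{0.\sigma_j^n(s)1^\omega\}$ of measure zero. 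Summing gives $V_n=U_n\cup(\text{finitely many points})$, so $\mu(V_n)\le 2^{-n}$ and $V_n\supseteq U_n\ni x$, yielding a finitely left-c.e.\ test that $x$ fails. The principal obstacle is exactly the finiteness constraint on the number of non-trivial intervals per level, which the change-stage activation rule is designed to circumvent; the subsidiary point that requires checking is simply that both scheduled jumps of $a_{(j,s)}^n$ are upward, so the endpoint is genuinely left-c.e.
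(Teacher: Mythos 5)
Your proposal follows the paper's proof in both directions, with only cosmetic differences in bookkeeping. For the $\Leftarrow$ direction, the paper ``kills'' all current intervals and re-installs the whole clopen set each time the Shoenfield approximation changes, whereas you keep per-string slots $(j,s)$ activated only at change stages; both schemes rely on the Shoenfield stabilization to guarantee finitely many non-trivial intervals per $n$ and both introduce degenerate (measure-zero) intervals for the abandoned attempts, which is harmless.

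The one point you elide is in the $\Rightarrow$ direction: you announce ``the finite union $U_n := \bigcup_i[\tilde a_i^n,\tilde b_i^n]$'' without explaining how $\emptyset'$ knows it can stop after finitely many $i$. If $\emptyset'$ were to round \emph{every} index $i$ outward, the resulting set would be merely $\Sigma^0_1(\emptyset')$, not $\emptyset'$-clopen, and $\bigcap_n U_n$ would only be $\Pi^0_2(\emptyset')$, which is not enough for a $\emptyset'$-Kurtz test. The paper supplies the missing step: $\emptyset'$ searches for the least $k_n$ with $(\forall j\ge k_n)(\forall s)\,b_j^n[s]=0$ --- this is a $\Pi^0_1$ predicate, hence $\emptyset'$-decidable, and some such $k_n$ exists by the definition of a finitely left-c.e.\ test --- and only then rounds the finitely many intervals with $i<k_n$. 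Adding that one line makes your argument complete and essentially identical to the paper's.
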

\begin{proof}

  Suppose that $x$ is not finitely left-c.e.\ random and let $V_n=\bigcup_{i}[a_i^n,b_i^n]$ be a finitely left-c.e.\ test capturing $x$. Given $n$, we can use $\emptyset'$ to find $k_n$ such that $b_j^n[s]=0$ for all $j\geq k_n$ and all $s$. Therefore uniformly in $n$, $\emptyset'$ computes a clopen cover $U_n$ of $V_n$ such that $\mu(U_n)\leq 2\mu(V_n)\leq 2^{1-n}$. Hence there is a $\Pi^0_1(\emptyset')$ null class $\bigcap_n U_n$ such that $x \in \bigcap_nV_n \subseteq \bigcap_n U_n$. Then $x$ is not $\emptyset'$-Kurtz random.

    On the other hand, suppose that $x$ is not $\emptyset'$-Kurtz random and let $V$ be a $\Pi^0_1(\emptyset')$ null class such that $x\in V$. Suppose $\{V_n\}$ are uniformly $\emptyset'$-computable clopen classes such that $\mu(V_n)<2^{-n}$ and $V=\bigcap_nV_n$. By Shoenfield limit lemma there is a computable function $f:\omega^2\to\text{clopen classes}$ such that $\lim_s f(n,s)=V_n$ for all $n$. 

We now construct a finitely left-c.e.\ test $\{W_n\}$ as follows. Given $n$, at stage $s+1$: If $f(n,s+1)\neq f(n,s)$, then make all intervals in $W_n$ empty by increasing their left endpoints to right endpoints. Put finitely many intervals into $W_n$ so that $W_n=f(n,s)$. 

Since $\lim_s f(n,s)$ exists, there are only finitely many stages where finitely many intervals are put into $W_n$, so $W_n$ is a union of finitely many intervals. Since $W_n=V_n$, $\{W_n\}$ is a finitely left-c.e. test capturing $x$, so $x$ is not finitely left-c.e. random.\end{proof}

Recall that $f(2^\omega)$ is a $\Pi^0_1(\emptyset')$ null class. Hence by the $\Leftarrow$ direction, it can be viewed as a finitely left-c.e. test, although we do not know the measure of the $n$-th level $f_n=\bigcup_{\sigma \in 2^{n}}[f(\sigma0^\infty),f(\sigma1^\infty)]$.

\section{Further Discussion}
\subsection{Other variants of Omega}
In this subsection we consider the relation between the function $f$ and other variants of Chaitin's Omega. Becher, Figueira, Girgorieff and Miller \cite{MR2275867} defined their variant of Omega as follows. 
\begin{definition}[\cite{MR2275867}] Given an optimal prefix-free machine $V$. For $X \subseteq 2^{<\omega}$, define
$$\Omega_V[X]=\sum_{V(\sigma)\in X}2^{-|\sigma|}.$$
\end{definition}

Recall that our function $f=f_U$ is defined by \[f_U(x)=\sum_{\sigma\leq_L x}2^{-K_U(\sigma)}.\] To put $f$ into the form of $\Omega_V[X]$, we consider \[g_V(x)=\sum_{V(\sigma)\leq_L x}2^{-|\sigma|},\] and prove that $g$ can serve as an alternative definition of $f$ in the sense of the following Fact. 
\begin{fact}\label{502}
    For each optimal prefix-free machine $U$, there is an optimal prefix-free machine $V$ such that $f_U=g_V$.
\end{fact}

\begin{proof}
  We assume that $U_s (\sigma) \downarrow$ implies $| \sigma | < s$ and at
  each stage $U$ halts on at most one new string. We enumerate a KC set as follows, which will define a prefix-free machine $V$ by the Machine Existence Theorem. 
  
  \
  
  At stage $s + 1$. For all $| \sigma | < s$:
  
  (i) If $U_s (\sigma) = \tau$, $U_{s - 1} (\sigma) \uparrow$, and there is no
  string $\sigma_1 \neq \sigma$ such that $U_s (\sigma_1) = \tau$, enumerate $(|
  \sigma |, \tau)$.
  
  (ii) If $U_s (\sigma) = \tau$, $U_{s - 1} (\sigma) \uparrow$, and there is
  a string $\sigma_1$ such that $| \sigma_1 | \leqslant | \sigma |$ and $U_s
  (\sigma_1) = \tau$, do nothing.
  
  (iii) If $U_s (\sigma) = \tau$, $U_{s - 1} (\sigma) \uparrow$, and there is
  no string $\sigma_1$ such that $| \sigma_1 | \leqslant | \sigma |$ and $U_s
  (\sigma_1) = \tau$. Find the string $\sigma_2$ of the least length such that
  $U_s (\sigma_2) = \tau$. Then $| \sigma_2 | > | \sigma |$. Enumerate $(| \sigma |
  + 1, \tau)$, $(| \sigma | + 2, \tau)$,...,$(| \sigma_2 |, \tau)$.
  
  End construction.
  
  \
  
  Then $V$ is an optimal prefix-free machine and for all $\tau$, $2^{- K
  (\tau)} = \sum_{V (\sigma) = \tau} 2^{- | \sigma |}$. Hence $f_U(x) =g_V(x)$ for all $x$.
\end{proof}

Now $f_U(x)=g_V(x)=\Omega_V[\{\sigma:\sigma\leq_L x\}]$, so $f$ can be viewed as the restriction of $\Omega_V[\cdot]$ on the sets of the form $\{\sigma:\sigma \le _L x\}$. 

\begin{theorem}[Becher et al. \cite{MR2275867}]\label{503} Let $V$ be an optimal prefix-free machine. 

    (i) (Chaitin) If $X \subseteq 2^{<\omega}$ is infinite and $\Sigma_1^0$, then $\Omega_V[X]$ is 1-random.
    
    (ii) If $y \ge \emptyset'$ and $X$ is $\Sigma_1^0(y)$-complete or $\Pi_1^0(y)$-complete, then $\Omega_V[X]$ is 1-random. In particular, if $n \ge 2$ and $X$ is $\Sigma_n^0$-complete or $\Pi_n^0$-complete, then $\Omega_V[X]$ is 1-random.

    (iii) There is a $\Delta_2^0$ set $X\subseteq 2^{<\omega}$ such that $\Omega_V[X]$ is not 1-random.
\end{theorem}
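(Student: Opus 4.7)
I would follow Chaitin's classical proof that $\Omega$ is $1$-random, adapted to the partial sum $\Omega_V[X]$. Since $X$ is $\Sigma^0_1$, the sum $\Omega_V[X]$ is left-c.e.\ with canonical nondecreasing approximation $\alpha_s = \sum\{2^{-|\sigma|} : |\sigma|\le s,\ V_s(\sigma)\downarrow\in X_s\}$. The key decoding step is: if $\Omega_V[X]-\alpha_s<2^{-n}$, then no new $V$-computation of length $\le n$ with output in $X$ can appear after stage $s$, so from a short description of $\Omega_V[X]\upharpoonright n$ together with $O(1)$ extra bits locating such a stage we recover the finite trace $H_n=\{\sigma:|\sigma|\le n,\ V(\sigma)\in X\}$. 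Because $X$ is infinite and $V$ is optimal (so $\mathrm{range}(V)=2^{<\omega}$), these traces encode unboundedly much halting information, from which I would build a prefix-free machine $M$ witnessing $K(\Omega_V[X]\upharpoonright n)\ge n-O(1)$ via Kraft--Chaitin, giving $1$-randomness. An alternative route is to verify $\Omega\le_S\Omega_V[X]$ by packaging each growth event of $\Omega$ into a comparable growth event of $\Omega_V[X]$, then invoke Theorem~\ref{KS}.

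\textbf{Part (ii).} The $\Sigma^0_1(y)$-complete case is a direct relativization of (i) to oracle $y$: the approximation is $y$-computable from below and the relativized Chaitin argument yields $y$-randomness of $\Omega_V[X]$; since $y\ge\emptyset'$, every Martin--L\"of test is a $y$-Martin--L\"of test, so $y$-randomness implies $1$-randomness. For the $\Pi^0_1(y)$-complete case, note that $\bar X = 2^{<\omega}\setminus X$ is $\Sigma^0_1(y)$-complete, hence $\Omega_V[\bar X]$ is $y$-random by the previous case, and $\Omega_V[X]=\Omega_V-\Omega_V[\bar X]$ is right-$y$-c.e.\ (since $\Omega_V$ is left-c.e.\ and therefore $y$-computable when $y\ge\emptyset'$). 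A symmetric Chaitin-style decoding from approximations above again recovers $\{\sigma:|\sigma|\le n,\ V(\sigma)\in X\}$ and shows $y$-randomness. The ``in particular'' clause is then immediate: for $n\ge 2$, any $\Sigma^0_n$- or $\Pi^0_n$-complete set is $\Sigma^0_1(\emptyset^{(n-1)})$- or $\Pi^0_1(\emptyset^{(n-1)})$-complete, and $\emptyset^{(n-1)}\ge\emptyset'$.

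\textbf{Part (iii).} A trivial witness is $X=\emptyset$, which is $\Delta^0_2$ and gives $\Omega_V[\emptyset]=0$, not $1$-random. For a substantive (properly $\Delta^0_2$) example, the plan is to realize $\Omega_V[X]$ as a prescribed rational $q\in(0,\Omega_V)$. I would build $X$ as the pointwise limit of a computable sequence $X_s$ of finite subsets of $\mathrm{range}(V_s)$: at each stage, compare the current approximation $\sum_{\tau\in X_s}\sum_{V_s(\sigma)=\tau}2^{-|\sigma|}$ to $q$ and toggle the membership of a single image $\tau$ so as to reduce the discrepancy, exploiting the $\Delta^0_2$ flexibility to absorb the monotone growth of the weight of each $\tau$ as further $V$-descriptions are discovered. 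The main obstacle is stabilization: each $\tau$ must be toggled only finitely often. I would address this with a priority-style toggle budget (e.g.\ cap the number of status changes for $\tau_i$ by a computable function of $i$) and verify that tracking $q$ to precision $2^{-s}$ forces only finitely many corrective toggles per $\tau$, so that $X=\lim_s X_s$ is well-defined and $\Omega_V[X]=q$.
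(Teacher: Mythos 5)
Your argument for part (i) is the right Chaitin-style decoding, modulo two small points: the role of Kraft--Chaitin is misstated (one builds a prefix-free machine to get the \emph{upper} bound $K(\tau^*)\le K(\Omega_V[X]\upharpoonright n)+O(1)$, not to ``witness'' a lower bound on the latter), and the decoding ``wait until $\alpha_s>0.(\Omega_V[X]\upharpoonright n)$'' needs a word about why $\Omega_V[X]$ is not a dyadic rational, so that the wait terminates. The spine is sound: recover $S_n=\{\tau\in X: K_V(\tau)\le n\}$, enumerate $X$ to find $\tau^*\in X\setminus S_n$, and use optimality of $V$ to get $K(\tau^*)\ge K_V(\tau^*)-O(1)>n-O(1)$. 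Part (iii) is trivially satisfied by $X=\emptyset$ as you say; the priority construction you sketch for a rational positive value leaves the stabilization argument unverified, which is where all the work would be, but you do not need it for the literal statement.

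Part (ii) has a genuine gap. You claim that (i) relativizes to an oracle $y$ --- ``$X$ infinite and $\Sigma^0_1(y)$ implies $\Omega_V[X]$ is $y$-random'' --- and never invoke the completeness hypothesis. That relativized statement is false. In the decoding you obtain $\tau^*$ with $K_V(\tau^*)>n$; since $V$ is optimal only among \emph{unrelativized} prefix-free machines, this yields $K(\tau^*)>n-O(1)$ but gives no lower bound on $K^y(\tau^*)$, which is what you would need to conclude $K^y(\Omega_V[X]\upharpoonright n)\ge n-O(1)$. Concretely, take $y=\Omega\ge_T\emptyset'$ and $X=\{\sigma:\sigma\le_L\Omega\}$, which is infinite and $\Delta^0_1(\Omega)\subseteq\Sigma^0_1(\Omega)$. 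For the optimal machine $V$ of Fact~\ref{502} one has $\Omega_V[X]=f(\Omega)$, and by Theorem~\ref{lizhang} this is $\Omega$-random iff $\Omega$ is weakly low for $K$ --- which it is not, since $\Omega$ is not $\Omega$-random. So your relativized form of (i) already fails, and a correct proof of (ii) must use the completeness of $X$ in an essential way, which your proposal does not.
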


Now we can give an alternative proof of Corollary \ref{0416}: for a left-c.e.\ real $x$, the set $X=\{\sigma: \sigma\le_Lx\}$ is infinite and $\Sigma_1^0$. Then by Fact \ref{502} and Theorem \ref{503} (i), $f(x)$ is 1-random. Moreover, we have the following corollary which is not discussed in the previous sections.

\begin{corollary}
If $x$ is left-$\Sigma_n^0$-complete, left-$\Pi_n^0$-complete, right-$\Sigma_n^0$-complete or right-$\Pi_n^0$-complete for $n \ge 2$, then $f(x)$ and $\sum_{\sigma<_Lx}2^{-K(\sigma)}$ are 1-random.
\end{corollary}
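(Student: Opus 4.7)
The plan is to invoke Theorem~\ref{503}(ii) through the representation given by Fact~\ref{502}. Let $V$ be the optimal prefix-free machine supplied by Fact~\ref{502}, so that $f(x)=\Omega_V[X_x]$ for $X_x:=\{\sigma:\sigma\le_L x\}$. The identity $2^{-K(\tau)}=\sum_{V(\rho)=\tau}2^{-|\rho|}$ established in that proof also yields $\sum_{\sigma<_L x}2^{-K(\sigma)}=\Omega_V[Y_x]$ for $Y_x:=\{\sigma:\sigma<_L x\}$. Hence it suffices to show that under each of the four hypotheses, $X_x$ and $Y_x$ are $\Sigma_n^0$-complete or $\Pi_n^0$-complete for $n\ge 2$, and then to invoke Theorem~\ref{503}(ii).

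The key step is to verify that $Y_x$ is many-one equivalent to the dyadic left cut $L_x:=\{\sigma\in 2^{<\omega}: 0.\sigma 0^\omega <x\}$ and that $2^{<\omega}\setminus X_x=\{\sigma:\sigma>_L x\}$ is many-one equivalent to the dyadic right cut $R_x$. These identifications are immediate from the definition of $<_L$, and the only borderline case---$x$ being a dyadic rational---does not arise, since any $x$ that is left- or right-$\Sigma_n^0$- or $\Pi_n^0$-complete for $n\ge 2$ is non-computable. Consequently, if $x$ is left-$\Sigma_n^0$-complete (resp. left-$\Pi_n^0$-complete) then $L_x$, and hence $Y_x$, inherits that completeness; and if $x$ is right-$\Sigma_n^0$-complete (resp. right-$\Pi_n^0$-complete) then $R_x$ is complete and so $X_x$ is $\Pi_n^0$-complete (resp. $\Sigma_n^0$-complete) by taking complements. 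The extension between $X_x$ and $Y_x$ is absorbed by noting that $X_x\setminus Y_x=\{x\upharpoonright k: k\in\omega\}$ is $x$-computable and hence lies strictly below the relevant completeness level for $n\ge 2$.

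With $X_x$ and $Y_x$ completeness in hand, Theorem~\ref{503}(ii) immediately delivers the $1$-randomness of $f(x)=\Omega_V[X_x]$ and $\sum_{\sigma<_L x}2^{-K(\sigma)}=\Omega_V[Y_x]$.

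The main obstacle I expect is the bookkeeping in the middle step---carefully tracking that the many-one reductions between $Y_x$ and $L_x$, and between $2^{<\omega}\setminus X_x$ and $R_x$, preserve completeness (and not merely set membership in the class), and verifying that the symmetric differences among $X_x$, $Y_x$, $L_x$, and the complement of $R_x$ are absorbed cleanly in each of the four cases. This is routine but somewhat pedantic, and is the only place where real effort is required.
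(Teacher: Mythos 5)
Your overall architecture matches what the paper intends: write $f(x)=\Omega_V[X_x]$ and $\sum_{\sigma<_L x}2^{-K(\sigma)}=\Omega_V[Y_x]$ via Fact~\ref{502} (using that $2^{-K(\tau)}=\sum_{V(\rho)=\tau}2^{-|\rho|}$), check that the relevant cut sets are $\Sigma_n^0$- or $\Pi_n^0$-complete, and appeal to Theorem~\ref{503}(ii). The observation that the completeness hypothesis forces $x$ to be non-dyadic is also correct.

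However, the ``absorption'' step --- ``$X_x\setminus Y_x=\{x\upharpoonright k:k\}$ is $x$-computable and hence lies strictly below the relevant completeness level'' --- is wrong as stated, and this is exactly the load-bearing step. If $Y_x$ is $\Sigma_n^0$-complete then $Y_x\equiv_T\emptyset^{(n)}$, and since $x\equiv_T Y_x$ (one can recover $x$ bit by bit from $Y_x$), the set $P_x:=X_x\setminus Y_x$ is merely $\Delta_{n+1}^0$, which is \emph{above}, not strictly below, $\Sigma_n^0\cup\Pi_n^0$; and in general a $\Sigma_n^0$-$m$-complete set unioned with a $\Delta_{n+1}^0$ set need not remain $\Sigma_n^0$-complete (e.g.\ $\emptyset^{(n)}\cup(\emptyset^{(n)})^c=\omega$). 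What actually saves the argument is an explicit $m$-equivalence $Y_x\equiv_m X_x$ valid for non-dyadic $x$: one checks that $\tau0\in Y_x\Leftrightarrow\tau1\in X_x$ (both are equivalent to $0.\tau1<0.x$), that stripping trailing $1$'s from $\sigma$ does not change membership in $Y_x$, and that stripping trailing $0$'s does not change membership in $X_x$. Composing these gives computable many-one reductions in both directions (hard-coding one string known to lie in $Y_x$ and one known to lie outside $X_x$, which exist since $x\neq0^\omega,1^\omega$). From $Y_x\equiv_m X_x$ and $R_x=X_x^c$ one gets the right completeness for every case, and Theorem~\ref{503}(ii) then applies to both $\Omega_V[X_x]$ and $\Omega_V[Y_x]$. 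So the destination is right, but the proposed justification at the crucial step does not hold up and needs to be replaced by the $m$-equivalence argument.
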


\begin{remark}
    Theorem \ref{0418} and Remark \ref{remark0418} are not immediate corollaries of Theorem \ref{503} (iii) since we only consider the set $X$ of the form $\{ \sigma : \sigma \leq_L x \}$.
\end{remark}

Becher et al.\ \cite{MR2275867} also showed the following result which gives another characterization of Omega numbers. 

\begin{theorem}[Becher et al.\ \cite{MR2275867}]
If $X \subseteq 2^{<\omega}$ is c.e.\ and nonempty and $a\in[0,1]$ is left-c.e.\ and 1-random, then there is an optimal
prefix-free machine $V$ such that
\[
\Omega_V[X]= \sum_{V(\sigma)\in X} 2^{-|\sigma|}=a.
\]
\end{theorem}

The following is the similar result for our variant of Omega function. 

\begin{theorem}
  If $x\in 2^\omega$ is left-c.e.\ and $a\in[0,1]$ is left-c.e.\ and 1-random, then there is an optimal prefix-free machine $V$ such that $f_V(x)=a$.
\end{theorem}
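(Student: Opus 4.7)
The plan is to construct $V$ via a Kraft--Chaitin construction that combines a simulation of $U$ (for optimality) with a tuning piece that forces $f_V(x)=0.a$. A first useful observation is that, since $x$ is left-c.e., the set $X:=\{\sigma\in 2^{<\omega}:\sigma\le_L x\}$ is c.e.: indeed $\sigma\le_L x$ iff the dyadic rational $\sigma 0^\infty\le x$, which is c.e.\ from the left approximation of $x$ (the only subtlety, when $x$ is dyadic, contributes at most a uniformly c.e.\ finite set of additional $\sigma$'s). I fix a computable enumeration of $X$ and a computable nondecreasing dyadic sequence $a_s\uparrow 0.a$.

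I next choose a constant $k$ large enough that (i) $2^{-k+1}+0.a\le 1$, and (ii) $\beta:=0.a-2^{-k}f(x)$ is a nonnegative left-c.e.\ real. Condition (i) uses $0.a<1$. For (ii), since $0.a$ is left-c.e.\ and 1-random it is Solovay complete by Ku\v{c}era--Slaman (Theorem \ref{KS}), and $f(x)$ is left-c.e.\ by Corollary \ref{0416}; hence there exists $c>0$ with $c\cdot 0.a-f(x)$ left-c.e., and any $k$ with $2^{-k}\le 1/c$ makes $\beta=(0.a-f(x)/c)+(1/c-2^{-k})f(x)$ a sum of two nonnegative left-c.e.\ reals, so $\beta$ itself is left-c.e.

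With such a $k$ I enumerate a Kraft--Chaitin set $L$ in two interleaved batches. Batch (A): whenever $U(\sigma)=\tau$ halts, enumerate $(|\sigma|+k,\tau)$, contributing total weight $2^{-k}\Omega$ and ensuring $K_V(\tau)\le K_U(\tau)+k$ so that $V$ is optimal. Batch (B): a sequence of requests $(n_i,\mu_i)$ with the $\mu_i$ pairwise distinct and drawn from $X$, enumerated adaptively so that the \emph{effective} contribution of Batch (B) to $f_V(x)$ equals $\beta$ in the limit. Applying Kraft--Chaitin to $L$ yields the required prefix-free optimal machine $V$, and summing contributions one obtains $f_V(x)=2^{-k}f(x)+\beta=0.a$.

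The hard part is the design of Batch (B). A chosen $\mu_i$ may or may not lie in $\mathrm{range}(U)$, and when it does the Part (A) code of length $K_U(\mu_i)+k$ may undercut the Part (B) code of length $n_i$, shifting $\mu_i$'s contribution to $f_V(x)$ away from the intended $2^{-n_i}$. I handle this by a stage-wise bookkeeping using the upper-semicomputable approximations $K_U^s$: at each stage the construction computes the current effective Part (B) contribution and, when it lags behind the left-c.e.\ target $\beta_s$ (either because a fresh $\mu_i$ has entered the range of $U$ or because $K_U^s(\mu_i)$ has dropped), adds a compensating Batch (B) request for a fresh element of $X$. The total compensating weight is bounded by $\gamma:=\sum_{\mu_i\in\mathrm{range}(U)}2^{-K_U(\mu_i)-k}\le 2^{-k}f(x)\le 2^{-k}$, so the total Kraft--Chaitin weight stays below $2^{-k}\Omega+\beta+\gamma\le 2^{-k+1}+0.a\le 1$; convergence follows since every relevant quantity is left-c.e.\ and bounded, and $X$ is infinite so fresh $\mu_i$'s are always available.
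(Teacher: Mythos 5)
Your construction is correct in outline, and it takes a genuinely different route from the paper's, though both share the same skeleton: use Ku\v{c}era--Slaman (Theorem \ref{KS}) to peel off from $0.a$ a scaled left-c.e.\ baseline involving $f(x)$, then build a Kraft--Chaitin set that simulates $U$ with a fixed penalty (guaranteeing optimality) and adaptively assigns extra codes to strings in $\{\sigma:\sigma\le_L x\}$ so that the remainder is realized exactly. The difference lies in the tuning mechanism. The paper splits $0.a$ into a dyadic chunk $\sum_{i<N,a(i)=1}2^{-i-1}$ (realized by direct axioms $(i+1,0^i)$), a penalized copy of a left-c.e.\ real containing $f(x)-f(0^\infty)$, and a remainder $\alpha$ squeezed between $2^{-d}\sum_{j\ge N}2^{-K(0^j)}$ and $2^{-d+1}\sum_{j\ge N}2^{-K(0^j)}$; the remainder is then realized by controlling $K_V(0^j)$ for $j\ge N$, using the fixed family $\{0^j\}$ precisely because these strings satisfy $0^j\le_L x$ for \emph{every} $x$, so the construction never needs to enumerate $\{\sigma\le_L x\}$ (and indeed the theorem's hypothesis that $x$ is left-c.e.\ enters only through the left-c.e.-ness of $f(x)$); the price is the technical lemma fixing $N,m,d$ and the bookkeeping with $\Theta$, $c_s$, $e_s$. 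You instead take the baseline to be simply $2^{-k}f(x)$ and tune the whole remainder $\beta$ on adaptively chosen strings of the c.e.\ set $X$, compensating for interference from $U$-codes as it appears; your weight accounting ($\gamma\le 2^{-k}f(x)$, total $\le 2^{-k+1}+0.a\le 1$) is right, and the limit argument goes through. Two details you should make explicit for exactness: each compensating string $\mu$ must be chosen not only unused but also outside the current range of $U_s$, so that its effective contribution at the moment of the request is exactly $2^{-n}$ (otherwise the added effective weight $2^{-n}-2^{-K_U^s(\mu)-k}$ is not a clean power of two and exact catch-up gets messier); and at each stage you should only raise the effective contribution up to the current approximation $\beta_s\le\beta$, so that it never overshoots, after which the bounded-total-drop argument gives convergence to exactly $\beta$ and hence $f_V(x)=2^{-k}f(x)+\beta=0.a$. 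With those points spelled out, your argument is a somewhat more streamlined proof than the paper's, at the cost of using the c.e.-ness of $X$ (with the harmless non-uniformity when $x$ is dyadic, in which case $X$ is in fact computable, though the extra prefixes of $x$ form an infinite, not finite, set).
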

\begin{proof}
    We first prove the theorem for $x=1^\infty$, so we need to find an optimal prefix-free machine $V$ such that $f_V(1^\infty)=a$. Since $a$ is an Omega number, there is a prefix-free optimal machine $U$ such that $\mu({\rm dom}(U))=a$. We fix a constant $c_1$ such that $(1+2^{-c_1})\mu({\rm dom}(U))<1$. We construct a prefix-free machine $V$ using the Machine Existence Theorem by enumerating a KC set. Without loss of generality, suppose that at each stage $s$, $U$ halts on at most one new string $\tau$ and gives $U(\tau)=\sigma$. We distinguish $3$ cases. \begin{enumerate}[(i)]
        \item If there is no previous $V$-description of $\sigma$, we enumerate $(|\tau|,\sigma)$. 
        \item If $|\tau|\geq|\rho|-c_1$ where $\rho$ is the previous best $V$-description of $\sigma$, we pick any $\theta$ that does not have any $V$-description so far, and enumerate $(|\tau|,\theta)$. 
        \item If $|\tau|<|\rho|-c_1$ where $\rho$ is the previous best $V$-description of $\sigma$, we enumerate $(|\tau|,\sigma)$. We also pick any $\theta$ that does not have any $V$-description so far, and enumerate $(|\rho|,\theta)$. 
    \end{enumerate}
    
    It is straightforward to verify that the total weight of the requests is at most $(1+2^{-c_1})\mu({\rm dom}(U))<1$. Also, if $\tau$ is any $U$-description of $\sigma$, then in the corresponding stage, either we're in case (i) or (iii) where $(|\tau|,\sigma)$ is enumerated, so $K_V(\sigma)\leq|\tau|$, or we're in case (ii) where there is already a $V$-description $\rho$ of $\sigma$ such that $|\rho|\leq|\tau|+c_1$, so $K_V(\sigma)\leq|\tau|+c_1$. Therefore $K_V(\sigma)\leq K_U(\sigma)+c_1$ and $V$ is optimal. Finally, by induction on stage $s$, we can verify that $f_V(1^\infty)[s]=\mu({\rm dom}(U[s]))$, therefore $f_V(1^\infty)=\mu({\rm dom}(U))=a$. This proves the Theorem for $x=1^\infty$. 

    To prove the general case, since $x$ is left-c.e., there is a computable bijection $h$ from $2^{<\omega}$ to $\{\sigma:\sigma\leq_L x\}$. Since $V$ is optimal and $h$ is computable, $K_V(\rho)\leq^+K_U(h(\rho))$. We fix a constant $c_2$ so that $K_V(\rho)\leq K_U(h(\rho))+c_2$, and that $2^{-c_2}\mu({\rm dom}(U))+\mu({\rm dom}(V))<1$. 

    We construct a prefix-free machine $W$ by the Machine Existence Theorem using the KC set containing all $(|\tau|+c_2,U(\tau))$ where $U(\tau)\downarrow$, and also all $(|\tau|,h(V(\tau)))$ where $V(\tau)\downarrow$. 
    
    It is straightforward to verify that the total weight of the requests is exactly $2^{-c_2}\mu({\rm dom}(U))+\mu({\rm dom}(V))<1$. Since each $\sigma$ with $U$-description $\tau$ is requested a $W$-description of length $|\tau|+c_2$, $K_W(\sigma)\leq K_U(\sigma)+c_2$ so $W$ is optimal. For $\sigma\leq_L x$, two types of $W$-description of $\sigma$ are requested: descriptions of length $|\tau|+c_2$ where $U(\tau)=\sigma$, the best of which has length $K_U(\sigma)+c_2$; descriptions of length $|\tau|$ where $h(V(\tau))=\sigma$, the best of which has length $K_V(h^{-1}(\sigma))\leq K_U(\sigma)+c_2$. Therefore $K_W(\sigma)=K_V(h^{-1}(\sigma))$, and \[f_W(x)=\sum_{\sigma\leq_L x}2^{-K_W(\sigma)}=\sum_{\sigma\leq_L x}2^{-K_V(h^{-1}(\sigma))}=\sum_\rho 2^{-K_V(\rho)}=f_V(1^\infty)=a.\]
\end{proof}

\begin{corollary} 
For a real $a \in (0,1)$, the following are equivalent.

(i) $a$ is left-c.e.\ and 1-random.

(ii) There is an optimal prefix-free machine $V$ and a left-c.e.\ real $x$ such that $f_V(x)=0.a$.

(iii) For all left c.e.\ real $x$, there is an optimal prefix-free machine $V$ such that $f_V(x)=0.a$. 
\end{corollary}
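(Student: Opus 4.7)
The plan is to establish the cycle (i) $\Rightarrow$ (iii) $\Rightarrow$ (ii) $\Rightarrow$ (i). The implication (i) $\Rightarrow$ (iii) is exactly the preceding theorem applied to each choice of left-c.e.\ $x$, and (iii) $\Rightarrow$ (ii) is immediate (take any fixed left-c.e.\ real, for instance $x=0^{\infty}$). So the only remaining direction is (ii) $\Rightarrow$ (i).

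For (ii) $\Rightarrow$ (i), suppose $0.a = f_V(x)$ for an optimal prefix-free machine $V$ and a left-c.e.\ real $x$. I would first argue that $f_V(x)$ is a left-c.e.\ real: the partial sums $\sum_{\sigma \le_L x_s,\, |\sigma|\le s} 2^{-K_V(\sigma)[s]}$ (where $\{x_s\}$ is a computable nondecreasing approximation to $x$) form a computable nondecreasing sequence converging to $f_V(x)$, since enlarging $x$ in the $<_L$-order can only add terms to the sum and $K_V$ is right-c.e. This is just Fact \ref{41} relativized to the machine $V$ in place of $U$.

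For the randomness, I would essentially repeat the proof of Corollary \ref{0416} with $V$ in place of $U$: since $V$ is optimal, $K_V =^+ K_U$, so Theorem \ref{lizhang} and its proof apply verbatim to the function $f_V$. Pick a string $\sigma$ with $\sigma 0^{\infty} \le_L x$; then $\sigma 0^{\infty}$ is computable, hence weakly low for $K$, so $f_V(\sigma 0^{\infty})$ is $\sigma 0^{\infty}$-random, i.e.\ $1$-random, and it is also left-c.e. The difference $f_V(x) - f_V(\sigma 0^{\infty})$ is left-c.e., so by \cite{MR2548883} Proposition 3.2.27 the sum $f_V(x) = 0.a$ is again $1$-random. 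Combined with left-c.e.\ computability this gives (i).

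No step looks like a serious obstacle; the only thing to double-check is that all the earlier results used for the canonical optimal machine $U$ (Fact \ref{41}, Theorem \ref{lizhang}, Corollary \ref{0416}) genuinely transfer to an arbitrary optimal $V$, which they do because optimality alone is what underlies their proofs.
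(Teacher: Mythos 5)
Your proposal is correct and follows the natural route implicit in the paper: (i)$\Rightarrow$(iii) is exactly the preceding theorem, (iii)$\Rightarrow$(ii) is trivial, and (ii)$\Rightarrow$(i) is Fact \ref{41} and Corollary \ref{0416} transported to $f_V$, which is legitimate since optimality gives $K_V =^+ K_U$ and that suffices for Theorem \ref{lizhang} and its Solovay-function argument. The paper leaves this corollary unproved, and the argument you give is the one it is silently relying on.
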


 Although we do not pursue this idea further here, it is worth noting that $f$ can be extended to a binary function 
$$F(x,y):=\sum_{\sigma \le _L y}2^{-K^x(\sigma)}.$$
$F$ could also be worth studying since it is a generalization of our $f$ and the Omega operator $x\mapsto \sum_{U^x(\sigma)\downarrow} 2^{-|\sigma|}$ defined in \cite{downey2005relativizing}, where $U$ is a universal prefix-free oracle machine. Although we do not known whether the range of the Omega operator in \cite{downey2005relativizing} must be Borel, the range of $F$ is a perfect set as $f(2^\omega)$. And a real $y$ is weakly low for $K^x$ if and only if $F(x,y)$ is $x \oplus y$-random. Hence the measure of  the range $rng(F)$ is positive since it contains a $z$-random real for all $z \in 2^\omega$ (see \cite{downey2005relativizing} Theorem 5.3 for more details). Also $\mu(rng(F))<1$ since $\sup_{x \in 2^\omega}\sum_{U^x(\sigma)\downarrow} 2^{-|\sigma|}<1$ (see \cite{downey2005relativizing} section 9).

\subsection{Questions}
One may ask whether the Corollary \ref{comp} above is vacuous. In other words, is there a computable real in $f(2^\omega)$?
\begin{question}
    Is there a computable real in $f(2^\omega)$? Given a computable real (or just a rational) $p$, is there is an optimal prefix-free machine $V$ such that $p\in f_V(2^\omega)$?
\end{question}

We already know that $f$ is Turing invariant in the ideal induced by the $K$-trivial reals but it is not Turing invariant in general.
\begin{question}
    If $x$ is not $K$-trivial, can $f$ be Turing invariant on $deg(x)$?
\end{question}

\begin{question}\label{q2}
    Is there $x$ such that $f(x)$ is of hyperimmune-free degree?
\end{question}

If the answer of Question \ref{q2} is affirmative, fix a real $x$ such that $f(x)$ is of hyperimmune-free degree, then by a result of Yu (see \cite{downey2010algorithmic} Theorem 8.11.12), $f(x)$ is Kurtz random if and only if it is weakly 2-random. Since none of the reals in $f(2^\omega)$ is weakly 2-random, $f(x)$ is not Kurtz random. Also by Lemma 3.9 of Ng et al. \cite{MR3289547}, $f(2^\omega)$ as a $\Pi_1^{0}(\emptyset')$ set, has a $\Pi_1^0$ subclass containing $f(x)$. Then by the Jochusch-Soare low basis theorem (see \cite{MR882921} VI.5), there is a real $y$ such that $f(y)$ is low.

\section*{Acknowledgements}
Part of this work is contained in the second author’s master thesis under the guidance of Liang Yu. The authors would like to thank Prof.\ Yu for the helpful discussions. The first author would like to thank Theodore A. Slaman and Ruizhi Yang for their guidance and time.

\end{document}